\theoremstyle{plain}
\newtheorem{TheirThm}{Theorem}[section]
\newtheorem{Thm}{Theorem}%[section]
\newtheorem{Cor}[Thm]{Corollary}
\newtheorem{Lem}[TheirThm]{Lemma}
\newtheorem{Prop}[TheirThm]{Proposition}
\theoremstyle{definition}
\newtheorem{Def}[TheirThm]{Definition}
\newtheorem{Eg}[TheirThm]{Example}
\theoremstyle{remark}
\begin{document}

%Topmatter 

%One author
\title{First return systems for some continued fraction maps}%[Proof of a conjecture by CKS]
\author{Thomas A. Schmidt}
\address{Oregon State University\\Corvallis, OR 97331}
\email{toms@math.orst.edu}
%\keywords{ergodicity, invariant measures, M\"obius maps, continued fractions}
\subjclass[2010]{37A25,  (11K50, 37A10, 37A44)}
\date{9 November 2025}

%End topmatter

\begin{abstract}  We prove a conjecture of Calta, Kraaikamp and the author: For all $n\ge 3$, each member of their one-parameter family of interval maps, denoted $T_{3,n,\alpha}$,  has its `first expansive return map'  of natural extension given by the first return map under the geodesic flow to a section of the unit tangent bundle  of the hyperbolic  surface uniformized by the underlying Fuchsian group $G_{3,n}$.    

To achieve the proof, we first prove the corresponding result for analogous  one-parameter families related to the Hecke triangle Fuchsian group $G_{2,n}$.   A direct comparison per $n$ of the $\alpha=1$ planar domains allows the Hecke group setting to provide sufficient information to prove the conjecture.

We also give details about the entropy functions for the Hecke triangle Fuchsian group maps,  $\alpha \mapsto h(T_{2,n,\alpha})$.   Each is continuous on $(0,1)$, increasing on $(0,1/2)$, decreasing on $(1/2,1)$, with a central interval of constancy.  We give precise formulas for the end points of the central intervals and also give precise formulas for the maximal entropy per family.   For fixed $\alpha$, the entropy of $T_{2,n,\alpha}$ goes to zero as $n$ tends to infinity.  
\end{abstract}

\maketitle

\tableofcontents

\section{Introduction}
\subsection{A conjecture proven, and results on entropy functions in the $(2,n)$ setting}    The Fuchsian triangle groups with one cusp and two elliptic conjugacy classes are, up to conjugation, of the form $G_{m,n}$ with  integers $n\ge m \ge 2$ (where if $m=2$ then $n>m$), where  $G_{m,n}$ is  the group generated by  

\begin{equation}\label{e:generators} 
A = \begin{pmatrix} 1& t\\
                                      0&1\end{pmatrix},\, B = \begin{pmatrix} \nu& 1\\
                                      -1&0\end{pmatrix},\,   C = \begin{pmatrix} -\mu& 1\\
                                      -1&0\end{pmatrix},
\end{equation}
for $\mu = \mu_m = 2 \cos \pi/m, \,\nu = \nu_n =  2 \cos \pi/n$ as well as  $t= \mu+\nu$.
 
For each of these groups and each $\alpha\in [0,1]$, \cite{CaltaKraaikampSchmidt} defined the  interval map $T_{m,n,\alpha}$ given in   \eqref{e:maps} below.   These maps are studied in  \cite{CaltaKraaikampSchmidt, CaltaKraaikampSchmidtContinEntrop, CaltaKraaikampSchmidtPfsErgodicity} with emphasis on the families  $T_{3,n,\alpha}$.  To an eventually expansive interval  map, \cite{CaltaKraaikampSchmidtContinEntrop} associate its {\em first pointwise expansive power map}, see \S~\ref{ss:firstExpPow} below.   They conjectured that the first pointwise expansive power map associated  to each  $T_{3,n,\alpha}$ is given by the first return map of the geodesic flow to a cross-section of the unit tangent bundle of the hyperbolic surface uniformized by $G_{3,n}$.    They proved the conjecture for $n=3$ and in general reduced the conjecture to showing that the volume of the unit tangent bundle  equals product of the Sinai-Kolmogorov measure theoretic entropy (hereafter simply: entropy) $h(T_{3,n,\alpha})$  with $\mu(\Omega_{3,n,\alpha})$, where $\Omega_{3,n,\alpha})$ is a planar domain for a 2-dimensional map naturally associated to $T_{3,n,\alpha}$ and $\mu$ is the measure defined in \eqref{e:muDefd} below.    We prove the conjecture by showing this equality.

%------------------------------------------------------------------------ 
\begin{Thm}\label{t:Main}   For all $n\ge 3$ and for all $\alpha \in (0,1)$, the product   
$h(T_{3,n, \alpha})\, \mu(\Omega_{3, n, \alpha})$ equals the volume of the unit tangent bundle of the hyperbolic orbifold uniformized by the triangle group $G_{3,n}$.    Furthermore,  the  first pointwise expansive power of  $T_{3, n, \alpha}$ has its natural extension given by
the first return of the geodesic flow to a cross section in the unit tangent bundle of the hyperbolic orbifold uniformized by $G_{3,n}$.
\end{Thm}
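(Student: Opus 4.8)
The plan is to turn the statement into a numerical identity and verify it, treating the Hecke family $G_{2,n}$ first and then bootstrapping to $G_{3,n}$. I would not prove the second assertion — that the first pointwise expansive power map of $T_{3,n,\alpha}$ has natural extension given by the first return of the geodesic flow to a cross section of $T^1(\mathbb H/G_{3,n})$ — directly: by the reduction of \cite{CaltaKraaikampSchmidtContinEntrop} it is a formal consequence of the displayed equality $h(T_{3,n,\alpha})\,\mu(\Omega_{3,n,\alpha}) = \mathrm{vol}\bigl(T^1(\mathbb H/G_{3,n})\bigr)$. That reduction, in the form I would use it, realizes $(\Omega_{3,n,\alpha},\mu)$ as a cross section of the geodesic flow with return time essentially $\log|T_{3,n,\alpha}'|$, so that the left side of the equality is the total Liouville mass swept out by that section under the flow; this is an \emph{integer} multiple of $\mathrm{vol}\bigl(T^1(\mathbb H/G_{3,n})\bigr)$, the multiple being $1$ — equivalently, the section is visited exactly once, which is what produces the return-map description — precisely when the product equals the volume. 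Consequently it suffices to establish the equality for a single $\alpha$ per pair $(3,n)$: the multiplicity is integer valued, and since $\alpha\mapsto h(T_{3,n,\alpha})$ is continuous \cite{CaltaKraaikampSchmidtContinEntrop} and $\mu(\Omega_{3,n,\alpha})$ varies continuously with $\alpha$ (including across the finitely many parameters where its combinatorial description changes), the multiplicity is continuous, hence constant on the connected interval $(0,1)$.

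First I would settle the Hecke case. Fixing $n$, I would describe the planar natural extension domain $\Omega_{2,n,\alpha}$ for every $\alpha\in(0,1)$, starting from the $\alpha = 1$ and symmetric-point pictures of \cite{CaltaKraaikampSchmidt} and tracking how its finitely many boundary cells deform, and occasionally bifurcate, as $\alpha$ moves; this gives $\mu(\Omega_{2,n,\alpha})$ in closed form, while Rokhlin's formula applied to the marginal density of $\mu$ gives $h(T_{2,n,\alpha})$. The orbits of the two endpoints of the defining interval under $T_{2,n,\alpha}$ satisfy a matching relation, and a Nakada–Natsui-type count of the steps to matching on the two sides then simultaneously produces the announced shape of $\alpha\mapsto h(T_{2,n,\alpha})$ — strictly increasing on $(0,1/2)$, a central plateau, strictly decreasing on $(1/2,1)$, with explicit plateau endpoints and explicit maximal value — and shows that $h(T_{2,n,\alpha})\,\mu(\Omega_{2,n,\alpha})$ is locally constant in $\alpha$; continuity of the entropy then makes it constant on all of $(0,1)$. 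Evaluating the product at $\alpha = 1$, where $\Omega_{2,n,1}$ is fully explicit (the map $T_{2,n,1}$ being essentially a Rosen-type continued fraction for $G_{2,n}$), and comparing with $\mathrm{vol}\bigl(T^1(\mathbb H/G_{2,n})\bigr) = 2\pi\,\mathrm{area}(\mathbb H/G_{2,n})$ then proves the $G_{2,n}$ analogue of the theorem for all $\alpha$.

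Next I would transfer to $G_{3,n}$ by the per-$n$ comparison at $\alpha = 1$. Here both $\Omega_{2,n,1}$ and $\Omega_{3,n,1}$ are explicitly computable from \cite{CaltaKraaikampSchmidt}, and the heart of the argument is a direct comparison of the two. The maps $T_{2,n,1}$ and $T_{3,n,1}$ share the order-$n$ elliptic generator $B$ and have parallel branch structures, so the two domains are combinatorially parallel; I would exploit this to evaluate $\mu(\Omega_{3,n,1})$ and, via Rokhlin's formula, $h(T_{3,n,1})$ by the same computation already carried out for $(2,n)$, keeping track of the controlled differences, and then check by a direct calculation that — since the $(2,n)$ product has been identified with $\mathrm{vol}\bigl(T^1(\mathbb H/G_{2,n})\bigr)$ — the $(3,n)$ product comes out to $\mathrm{vol}\bigl(T^1(\mathbb H/G_{3,n})\bigr) = 2\pi\,\mathrm{area}(\mathbb H/G_{3,n})$. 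By the first paragraph this equality at $\alpha = 1$ propagates to every $\alpha\in(0,1)$, which is the displayed identity of Theorem~\ref{t:Main}, and with it the ``furthermore'' clause.

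The main obstacle I anticipate is the explicit planar geometry, on two fronts that must both be handled uniformly in $n$. First, pinning down $\Omega_{2,n,\alpha}$ over the whole range $\alpha\in(0,1)$: the matching combinatorics, and with it the number of boundary cells of the domain, grows with $n$, so the deformation analysis and the resulting entropy and measure formulas have to be organized so that the $n$-dependence telescopes rather than forcing a separate argument for each $n$. Second, the $\alpha = 1$ comparison of $\Omega_{3,n,1}$ with $\Omega_{2,n,1}$ must likewise be made uniformly in $n$; identifying the correct correspondence between the cells of the two domains, and verifying that the leftover contributions carry exactly the $\mu$-mass needed for the two orbifold volumes to be related as required, is the crux of the proof. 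A subsidiary point needing care — because it is what legitimizes reducing to a single $\alpha$ — is the continuity of $\alpha\mapsto\mu(\Omega_{m,n,\alpha})$, not merely of the entropy, across the bifurcation parameters.
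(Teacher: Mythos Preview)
Your overall architecture matches the paper's --- settle the Hecke case first, then compare the two $\alpha=1$ planar domains --- and you correctly flag the cell-by-cell comparison as the crux of the transfer. But there is a genuine gap in how you identify the constant value in the Hecke case, and it propagates to the transfer step.

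You propose to evaluate $h(T_{2,n,\alpha})\,\mu(\Omega_{2,n,\alpha})$ at $\alpha=1$, calling $T_{2,n,1}$ ``essentially a Rosen-type continued fraction.'' Both claims fail: $\mu(\Omega_{2,n,1})=\infty$ (the domain meets the singular curve $1+xy=0$), so the product is not defined there, and the Rosen connection is actually at $\alpha=1/2$, where $T_{2,n,1/2}$ is the symmetric Rosen map. The paper's key extra ingredient --- entirely absent from your plan --- is Nakada's theorem (Theorem~\ref{t:HitoshiShowsGetHalf}), which gives $h(f_n)\,\mu(\Omega_n)=\tfrac12\,\mathrm{vol}(T^1(G_{2,n}\backslash\mathbb H))$ for the Rosen map $f_n$; a short symmetry argument then doubles this to pin down the product at $\alpha=1/2$. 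Without that external input you would have to evaluate the Rohlin integral over $\Omega_{2,n,1}$ directly, a dilogarithm computation of exactly the ``notoriously difficult'' type the whole detour through $m=2$ is designed to avoid. The same infinite-measure issue recurs at $\alpha=1$ on the $(3,n)$ side, so the comparison there must be phrased in terms of Rohlin integrals $\int_\Omega\tau\,d\mu$ rather than the products $h\cdot\mu$ you refer to; the paper proves separately (Lemma~\ref{l:integralsMatch}) that these integrals remain constant out to the endpoints, and then the transfer is the single explicit conjugation $M_1^{-1}A_3C_3^{\,2}M_1=A_2C_2$ with $M_1=\bigl(\begin{smallmatrix}1&1\\0&1\end{smallmatrix}\bigr)$, which carries $\Omega_{2,n,1}$ onto $\Omega_{3,n,1}\setminus([0,1]\times[-1,0])$ with the leftover square contributing exactly $\pi^2/3$.
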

%------------------------------------------------------------------------ 
 Note that throughout the remainder of this paper, we will simply write ``surface" instead of  orbifold.

To reach these results, we prove similar results for  the natural analogs of our maps, denoted $T_{2,n,\alpha}$, but related to the Hecke triangle groups $G_{2,n}$.     Beyond what is strictly necessary to prove the conjecture, we also have the following. 
For $n \ge 3$ let  $R_n$  be the positive root of $x^2- (2-t_{2,n}) x - 1$ if $n$ is odd, and for even $n$ let $R_n = 1$.
%------------------------------------------------------------------------------------------ 
\begin{Thm}\label{t:mIsTwoSimpleEntropyBehavior}   For each  $n\ge 3$  the entropy function $\alpha \mapsto h (T_{2,n,\alpha})$ is finite and  nonzero on $0<\alpha<1$.  Furthermore, this function is:  increasing on  $0< \alpha < 1- \frac{R_n}{t_{2,n}}$; is constant on $\big[1- \frac{R_n}{t_{2,n}}, 1+\frac{R_n}{t_{2,n}}\big)$;  and, decreases on  $1+\frac{R_n}{t_{2,n}}\le \alpha <1$.    Its maximal value is 
\[  \begin{cases}  \dfrac{\pi^2 \,(n - 2)}{2 n \ln \bigg(\dfrac{ 1 + \cos \pi/n}{  \sin  \pi/n}\bigg)}&\text{if}\;\; 2|n;\\
\\
                            \dfrac{\pi^2 \,(n - 2)}{2 n\ln \bigg(\dfrac{ 1 +R_n}{2 \sin \frac{\pi}{2 n}} \bigg)}&\text{otherwise}.
                                                 \end{cases}
\]

Finally, for any $0<\alpha<1$,   the limit as $n\to \infty$ of $h (T_{2,n,\alpha})$ equals zero.
\end{Thm}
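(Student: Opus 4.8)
The plan is to reduce the entire theorem to the behaviour of the single function $\alpha\mapsto\mu(\Omega_{2,n,\alpha})$, the $\mu$-mass of the planar natural-extension domain, and then to analyse that function using the \emph{matching} phenomenon --- synchronization of the forward $T_{2,n,\alpha}$-orbits of the two endpoints of the fundamental interval --- enjoyed by these particular maps. For the reduction, I would invoke the $(2,n)$-analogue of Theorem~\ref{t:Main}, proved earlier in the paper: for each $\alpha\in(0,1)$ the map $T_{2,n,\alpha}$ (or, where necessary, its first pointwise expansive power) has natural extension $\big(\Omega_{2,n,\alpha},\widehat T_{2,n,\alpha},\mu\big)$ with $\mu$ as in \eqref{e:muDefd}, realised as a first-return section of the geodesic flow on the unit tangent bundle of $\mathbb H/G_{2,n}$. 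Since that flow has Liouville entropy $1$, Abramov's formula gives
\[
h(T_{2,n,\alpha})\cdot\mu(\Omega_{2,n,\alpha})\;=\;c_n\,,
\qquad
c_n\;:=\;\operatorname{vol}\big(T^1(\mathbb H/G_{2,n})\big)\;=\;\frac{\pi^2(n-2)}{2n}\,.
\]
Hence ``$h(T_{2,n,\alpha})$ finite and nonzero on $(0,1)$'' is equivalent to ``$0<\mu(\Omega_{2,n,\alpha})<\infty$ on $(0,1)$'', which is read off the explicit description of the domains; the claimed increasing/constant/decreasing profile of $h(T_{2,n,\alpha})$ is exactly the reversed profile of $\mu(\Omega_{2,n,\alpha})$; and $\max_\alpha h(T_{2,n,\alpha})=c_n/\min_\alpha\mu(\Omega_{2,n,\alpha})$.

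Next I would partition $(0,1)$ according to the combinatorics of the orbits of the two endpoints of the fundamental interval of $T_{2,n,\alpha}$, using the fibred descriptions of $\Omega_{2,n,\alpha}$ already produced while proving the $(2,n)$-analogue of Theorem~\ref{t:Main}. The decisive structural fact --- and the reason the entropy here is monotone on each side, unlike the Nakada--Natsui situation for classical $\alpha$-continued fractions --- is that for these maps matching holds on all of $(0,1)$ except at the two endpoints of a single central matching interval $M_0$. On $M_0$ the two endpoint orbits reach a common point after the \emph{same} number of steps (matching exponent zero), so $\Omega_{2,n,\alpha}$ is independent of $\alpha\in M_0$ up to a finite measure-preserving cut-and-paste, whence $\mu(\Omega_{2,n,\alpha})$ is constant on $M_0$. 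Integrating the density $1/(1+xy)^2$ over the finitely many rectangles making up the domain, and distinguishing the two ways the relevant endpoint orbit can close up --- which depends on the parity of $n$, and is where $R_n$ enters for odd $n$ while degenerating to $1$ for even $n$ --- yields
\[
\mu(\Omega_{2,n,\alpha})\;=\;
\begin{cases}
\ln\dfrac{1+\cos\pi/n}{\sin\pi/n}\,, & 2\mid n\,,\\[2ex]
\ln\dfrac{1+R_n}{2\sin\frac{\pi}{2n}}\,, & \text{otherwise}\,,
\end{cases}
\qquad\alpha\in M_0\,,
\]
and the identification of $\partial M_0$ with the asserted values is a direct computation with the defining quadratic $x^2-(2-t_{2,n})x-1$, whose positive root is exactly the first coordinate at which the endpoint orbit becomes periodic.

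On each of the two components of $(0,1)\setminus M_0$ the same kind of fibred description of $\Omega_{2,n,\alpha}$ persists, now with boundary curves depending real-analytically on $\alpha$; I would differentiate $\mu(\Omega_{2,n,\alpha})$ directly. The variation of the domain is supported near the two moving endpoints, and the sign of $\frac{d}{d\alpha}\mu(\Omega_{2,n,\alpha})$ equals that of the matching exponent, which is of one sign on the left component and the opposite on the right; this gives strict monotonicity of $\mu(\Omega_{2,n,\alpha})$, decreasing up to $\inf M_0$ and increasing beyond $\sup M_0$. By the reduction, $h(T_{2,n,\alpha})$ is then strictly increasing, constant on $M_0$, and strictly decreasing, with maximum $c_n$ divided by the displayed constant --- the asserted formula. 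Finally, for fixed $\alpha\in(0,1)$ one has $h(T_{2,n,\alpha})\le\max_\beta h(T_{2,n,\beta})$, and from the maximal-value formulas $\max_\beta h(T_{2,n,\beta})\sim\pi^2/(2\ln n)\to 0$ as $n\to\infty$: indeed $c_n\to\pi^2/2$ while the denominators grow like $\ln n$, since $\sin\frac{\pi}{n}\sim\pi/n$, $\sin\frac{\pi}{2n}\sim\pi/(2n)$, and $R_n\to1$ in the odd case. Therefore $h(T_{2,n,\alpha})\to 0$ for every fixed $\alpha$.

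The main obstacle is the pair of claims that make the whole argument rigid: that the matching locus is cofinite in $(0,1)$, and that the matching exponent keeps the stated constant sign on each side of $M_0$. A priori the entropy could behave as erratically as in the Nakada--Natsui examples, and ruling this out --- i.e.\ controlling the orbits of the two endpoints of the fundamental interval of $T_{2,n,\alpha}$ uniformly for all $\alpha\in(0,1)$ --- is precisely where the specific combinatorial form of the maps $T_{2,n,\alpha}$ has to be exploited.
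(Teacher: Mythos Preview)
Your overall strategy coincides with the paper's: reduce to $\alpha\mapsto\mu(\Omega_{2,n,\alpha})$ via constancy of the product $h(T_{2,n,\alpha})\,\mu(\Omega_{2,n,\alpha})$, then analyse the mass through matching. Two numerical slips cancel: $\operatorname{vol}(T^1(G_{2,n}\backslash\mathbb H))=\pi^2(n-2)/n$, not $\pi^2(n-2)/(2n)$ (see \eqref{e:volForm}), and the mass on the central interval is $2\ln(\cdots)$, not $\ln(\cdots)$ (Lemma~\ref{l:middleContainsOneHalf}); your final entropy value is correct.

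There is, however, a genuine structural gap. The complement $(0,1)\setminus\mathcal M_n$ is \emph{not} two intervals on which the fibred description of $\Omega_{2,n,\alpha}$ varies real-analytically, so one cannot simply ``differentiate $\mu(\Omega_{2,n,\alpha})$ directly'' across either side. Each side is a countable union of matching intervals $J_{k,v}$ indexed by pairs $(k,v)$ with $v$ in the tree $\mathcal V$ (Theorem~\ref{t:partition}), together with a Lebesgue-null set of non-matching parameters; the combinatorial type of $\Omega_{2,n,\alpha}$ jumps at each $\partial J_{k,v}$. The paper handles this in two pieces: (i) on each $J_{k,v}$ a cut-and-paste comparison (Lemma~\ref{l:massIncreasesWhere}) shows the mass changes with sign $\overline S(v)-\underline S(k,v)$, and a direct induction on the word combinatorics (Lemma~\ref{l:massChangeExactlyWhere}) computes $\underline S(k,v)-\overline S(v)$ from \eqref{eq:defW}--\eqref{eq:defL} and proves it is positive for \emph{every} $J_{k,v}$ left of $\mathcal M_n$; (ii) continuity of the mass at non-matching parameters (\S\ref{ss:conMuOm}) then glues the pieces into global monotonicity (Corollary~\ref{c:massIncreasesWhere}). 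You correctly flag (i) as the main obstacle, but your sketch does not anticipate the need for (ii).

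One further point of logical order: the paper does not obtain constancy of $h\cdot\mu$ by invoking the geodesic-flow cross-section result, which is logically downstream. Instead, explicit word-processing (Lemmas~\ref{t:wordPro} and~\ref{l:firstReturnTo2ndQuadrant}) shows that for any $\alpha,\alpha'\in(0,1)$ the planar maps $\mathcal T_{2,n,\alpha}$ and $\mathcal T_{2,n,\alpha'}$ share a common first-return system on a region of positive finite $\mu$-mass, whence Abramov's formula for induced transformations gives Proposition~\ref{p:continuityOfEntropyOnIntervals}. The specific constant $\operatorname{vol}(T^1(G_{2,n}\backslash\mathbb H))$ is then pinned down at the single value $\alpha=1/2$ via Nakada's theorem on Rosen fractions (Theorem~\ref{t:HitoshiShowsGetHalf}, \S\ref{ss:symmRosen}). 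Your asymptotic argument for $h(T_{2,n,\alpha})\to 0$ is correct and more explicit than the paper's one-line remark.
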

%------------------------------------------------------------------------------------------

%------------------------------------------------------------------------------------------
\begin{figure}[h]
\scalebox{.4}{
\includegraphics{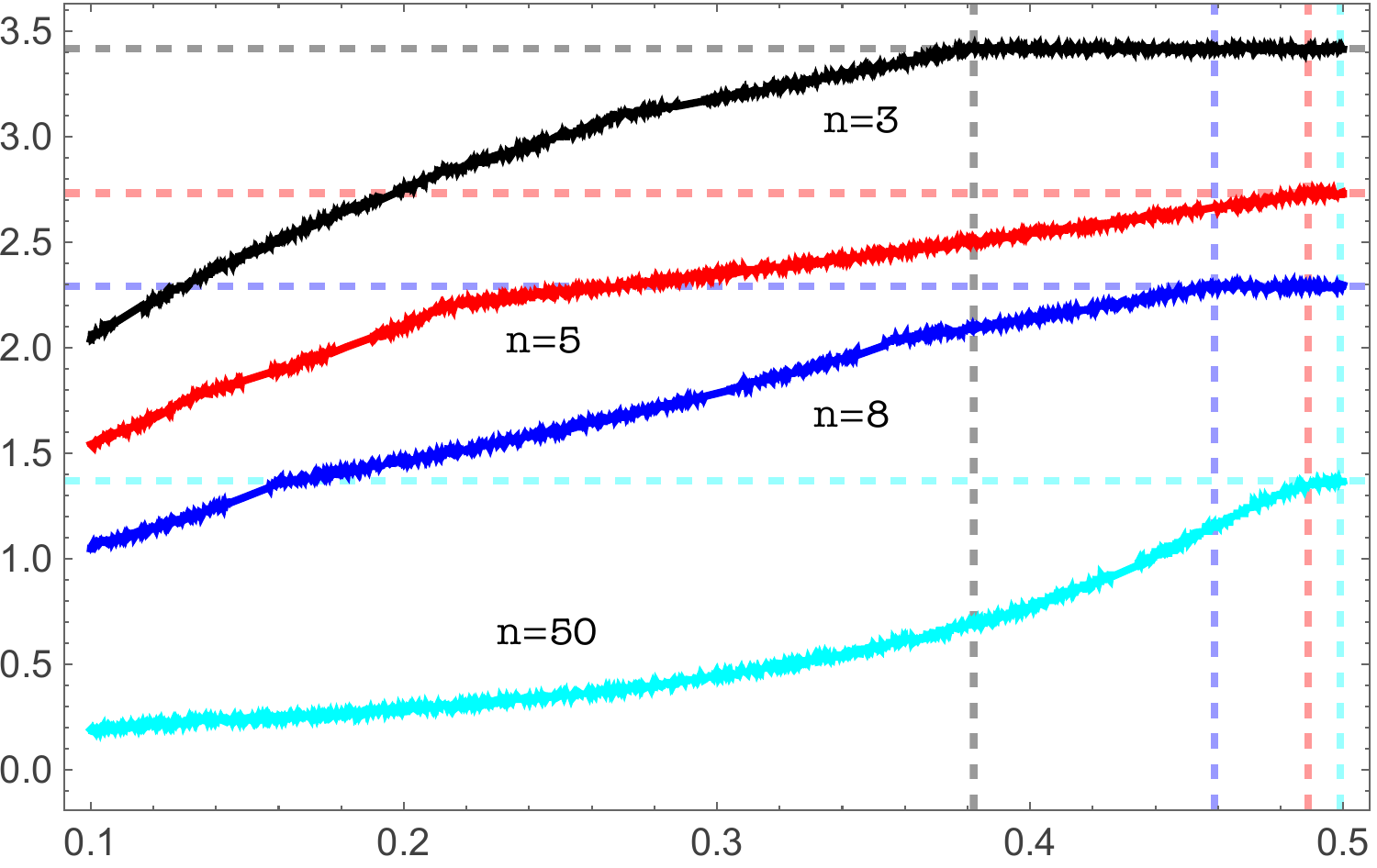}
}
\caption{Birkhoff sums approximation to entropy functions $\alpha \mapsto h(T_{2,n,\alpha})$, for $n\in \{3, 5,8,50\}$  and  $0.1< \alpha \le 1/2$.  Horizontal dashed lines at  maximum value as per Theorem~\ref{t:mIsTwoSimpleEntropyBehavior}; vertical dashed lines at $1-  R_n/t_{2,n}$ per $n$. }
\label{f:BirkEnt}
\end{figure}
%------------------------------------------------------------------------------------------

\subsection{Motivation}    For much of the technical vocabulary mentioned here, see \S~\ref{s:back} Background.

 Although the connection between the geodesic flow on the unit tangent bundle of the modular surface and the regular continued fraction interval map goes back to at least Artin's \cite{Artin} in the 1920s, it was only in the 1980s and 1990s that one had proofs \cite{AdlerFlatto, Series, ArnouxCodage} that this interval map was a factor of the dynamical system defined by the first return under the geodesic flow to a section of the unit tangent bundle of the modular surface.

 In the ensuing years, various works have shown similar results for other interval maps also given piecewise by the fractional linear action of elements of a Fuchsian group of finite covolume.  Among these are \cite{ AdlerFlattoBack, AdlerFlattoBull, NakadaFord, MayerStroemberg, KatokUgarcoviciApplAbAndFlo, BocaMerriman,AbramsKatok, ArnouxSchmidtCross}.        In these works, consideration of the Sinai-Kolmogorov measure theoretic entropy (hereafter simply: entropy), $h(T)$,  of the interval map $T$ arises naturally.

The behavior of the entropy function for each of a parametrized family of  continued fraction-like interval maps was studied by Nakada \cite{N} in the setting of what are now known as the Nakada $\alpha$-continued fraction maps.   These maps, $T_{\alpha}$  with $\alpha \in [0,1]$,  are all piecewise by elements of the modular group $\text{PSL}_2(\mathbb Z)$ acting linearly fractionally.   
Nakada and Natsui \cite{NakadaNatsui} pointed to a phenomenon that now goes under the name of  `matching intervals', these are intervals  of $\alpha$ such that  $T^{k}_{\alpha}(\alpha) = T^{k'}_{\alpha}({\alpha-1})$ for pairs of positive integers $(k,k')$;   the function $\alpha \mapsto h(T_{\alpha})$ is  increasing, decreasing or constant on the interval 
according to whether   $k>k'$, $k<k'$, or $k=k'$.         Luzzi and Marmi  \cite{ LuzziMarmi}  strongly suggested that    $\alpha \mapsto h(T_{\alpha})$ is a continuous function of $\alpha$.    The continuity was proven a few years later  by both Carminati and Tiozzo \cite{CarminatiTiozzo} and  \cite{KraaikampSchmidtSteiner}.  Luzzi-Marmi also asked if every $T_{\alpha}$ is the factor of some cross section to the geodesic flow on the unit tangent bundle of the modular surface.    This was proven in \cite{ArnouxSchmidtCross}, using the techniques of the type sketched below in \S~\ref{ss:ArnouxMethod}. 
 
After Calta and the author \cite{CaltaSchmidt} considered various metric properties of the  interval maps $T_{3,n,0}\,$,     Calta, Kraaikamp and this author \cite{CaltaKraaikampSchmidt} then introduced the $T_{m,n, \alpha}$ with concentration on dynamics of the case of $m=3$.   By the way, the reason we avoided the case of $m=2$ was so as to not overlap with results already in the literature.   Continuity of the entropy functions  $\alpha \mapsto h(T_{3,n,\alpha})$ is shown for each $n\ge 3$ in \cite{CaltaKraaikampSchmidtContinEntrop}, using results of \cite{CaltaKraaikampSchmidtPfsErgodicity}.   

A  naive   hope  could be that any somewhat natural interval map $T$ which is given piecewise by the fractional linear action of a reasonable set of elements of a Fuchsian group of finite covolume should arise as the factor of the first return system of the geodesic flow on the unit tangent bundle of the hyperbolic surface uniformized by the group. 
However, already \cite{ArnouxSchmidtCross} gave an example of an interval map which, although given piecewise by the fractional linear action of elements of the modular,   is not   (as discoverable by the techniques of \S~\ref{ss:ArnouxMethod}) of  this type.       Still, the maps $T_{3,n,\alpha}$ are quite naturally, one could even say geometrically, defined in terms of elements of $G_{3,n}$. On the other hand,  these maps (in general) are only eventually expansive and the methods we use demand true expansiveness.   We were thus lead to introduce an acceleration of the maps.    The `first pointwise expansive power' of a map as defined in \cite{CaltaKraaikampSchmidtContinEntrop} is recalled in \S~\ref{ss:firstExpPow}  below.     With this in place, \cite{CaltaKraaikampSchmidtContinEntrop} made the conjecture that  each  $T_{3,n,\alpha}$ has its first pointwise expansive power of natural extension given by the first return by the geodesic flow to a cross-section  in the unit tangent bundle of the surface uniformized by $G_{3, n}$.

\subsection{Progress in \cite{CaltaKraaikampSchmidtContinEntrop} towards proof}
A proof in the case of $n=3$ and all values of $\alpha$ is given in \cite{CaltaKraaikampSchmidtContinEntrop}.  They also give numerical evidence for the conjecture for larger values of $n$, but that evidence is questionable  already for small $n$, see [\cite{CaltaKraaikampSchmidtContinEntrop}, \S~15.4].     We quote from that paper: ``The integrals which appear are naturally related to dilogarithmic functions and hence are notoriously difficult to evaluate exactly.    See  \cite{FriedSymDynTri} for  discussion of the appearance of the dilogarithm in a related setting." 

The conjecture (for general $n$ and $0<\alpha<1$) is reduced in \cite{CaltaKraaikampSchmidtContinEntrop}  in two steps.   Already to prove the continuity of the entropy functions, for each $n\ge 3$ and $0< \alpha< 1$ they give a planar extension $\mathcal T_{3,n,\alpha}: \Omega_{3,n,\alpha} \to \Omega_{3,n,\alpha}\,$, whose action on its first, thus $x$-, coordinate is given by   $T_{3,n,\alpha}$.    See \S~\ref{ss:planarBckgrd} for the basics of this type of planar extension.   The first reduction step shows that the conjectured result  holds for all $n\ge 3$ and all $0<\alpha<1$ for which  $h (T_{3,n,\alpha})  \mu(\Omega_{3,n,\alpha}) = \text{vol}(T^1(G_{3,n}\backslash \mathbb H))$, where this latter denotes the volume of the unit tangent bundle of the surface uniformized by $G_{3,n}$, see \eqref{e:volForm}.   This reduction step relies on:  Rohlin's entropy formula for entropy, see \eqref{e:rohlinEnt}; the fact that the invariant measure for each $T_{3,n,\alpha}$ is the marginal measure on $\Omega_{3,n,\alpha}$; and,  what we call `Arnoux's method', see \S~\ref{ss:ArnouxMethod}.  In brief, this last gives a measure preserving map sending (in the setting discussed) $\Omega_{3,n,\alpha}$ to $T^1(G_{3,n}\backslash \mathbb H)$ such that 
the integrand $\ln |T'(x)|$ in Rohlin's formula is also the length of the geodesic path following the geodesic flow from the image of $(x,y)\in \Omega_{3,n,\alpha}$ to the point corresponding to $\mathcal T_{3,n,\alpha}(x,y)$.    The second step  replaces showing the equality for  each $n$ and all $0<\alpha<1$ of $h (T_{3,n,\alpha})  \mu(\Omega_{3,n,\alpha})$ with the volume by showing the equality to this volume of the integral over (the infinite mass) $\Omega_{3,n,0}$ with respect to $\mu$ of the Rohlin integrand (for the map $T_{3, n, 0}$). We call such integrals {\em Rohlin integrals}.  That is, they show that the a priori varying values $h (T_{3,n,\alpha})  \mu(\Omega_{3,n,\alpha})$ are all equal to the Rohlin integral over $\Omega_{3,n,0}$.  See Lemma~\ref{l:integralsMatch} for an  analogous result when $m=3$ is replaced by $m=2$.  

The proof of the conjecture in the case of $m=n=3$  in  \cite{CaltaKraaikampSchmidtContinEntrop}    proceeds by partitioning $\Omega_{3,3,0}$ --- determined in \cite{CaltaSchmidt} --- into the union of two rectangles, with the value of the Rohlin integral being separately recognizable as equalling the volume of the unit tangent bundle of the modular surface, $\text{vol}(T^1(G_{2,3}\backslash \mathbb H))$.  Again from the well-known formula \eqref{e:volForm}, arithmetic then gives the result. 

 \subsection{Impetus for this paper}  On the occasion of a conference in April  2025 at the E.~Schr\"oder Institute in Vienna which was held in part to celebrate the retirement of C.~Kraaikamp, the author decided to revisit the conjecture of \cite{CaltaKraaikampSchmidtContinEntrop}.

\subsection{Approach in this paper}
Despite the aforementioned earlier hesitation of treating families of interval maps the Hecke group setting,  we prove the conjecture by first showing that the analogous conjecture when $m=2$ holds for any $n$ and $0<\alpha<1$.     
This path was taken because of an observation that each $\Omega_{3,n,1}$ is the union of a rectangle whose Rohlin integral equals  $\text{vol}(T^1(G_{2,3}\backslash \mathbb H))$ and its complement, with computation suggesting that this second piece would be given by a simple transformation applied to what was sure to be $\Omega_{2,n,1}$.     Furthermore,  several years ago Giulio Tiozzo encouraged the author to consider the various $T_{2,n,\alpha}$.  Recollection of that  interaction fully increased our belief that the work of \cite{CaltaKraaikampSchmidt, CaltaKraaikampSchmidtContinEntrop, CaltaKraaikampSchmidtPfsErgodicity} would all go through in the $m=2$ setting.     The key extra ingredient is a result of Nakada \cite{NakadaLenstra}, Theorem~\ref{t:HitoshiShowsGetHalf} below, which gives a relation of the entropy of each of the Rosen fraction maps to $\text{vol}(T^1(G_{2,3}\backslash \mathbb H))$.  As \cite{ArnouxSchmidtCommCF} used without proof, in the notation of this paper, this leads directly to $h (T_{2,n,1/2})  \mu(\Omega_{2,n,\alpha}) = \text{vol}(T^1(G_{2,n}\backslash \mathbb H))$.  We give a proof in \S~\ref{ss:symmRosen}.

 \subsection{Entropy maps in the $m=2$ setting} 
 We are purposely light throughout the paper in our treatment of the case $(m,n) = (2,3)$.    In fact,  the $T_{2,3,\alpha}$ are maps within the family of $G_{2,3} = \text{PSL}_2(\mathbb Z)$-related `$(a,b)$-continued fraction maps' introduced by Katok and Ugarcovici \cite{KatokUgarcoviciAbBegins}.    The $T_{2,3,\alpha}$ themselves, and in particular the entropy function $\alpha \mapsto h(T_{2,3, \alpha})$, were studied  in \cite{CarminatiIsolaTiozzo}.     Key to their approach is also a matching condition, which can be compared to be the matching condition of Lemma~\ref{l:shortRightId} (iv).   Their approach relies on `Farey words' in two letters to describe the matching intervals.   One can expect a direct formula to pass from  their two variable labeling of the matching intervals to our $(k,v)$ labels, but we do not pursue that here.     We do give Figure~\ref{f:changeOfMass} to allow the reader to check our planar domain against one figured in \cite{CarminatiIsolaTiozzo}, and an example to support our figure.       That all said,  it was rereading the work of \cite{CarminatiIsolaTiozzo} which suggested to us to take the detour to achieve Theorem~\ref{t:mIsTwoSimpleEntropyBehavior}.   
 
  We used Birkhoff sum approximations to find the approximate curves for the $\alpha \mapsto h(T_{2,n, \alpha})$ shown in Figure~\ref{f:BirkEnt}.   We first saw such ideas in \cite{Choe}, and in the setting of continued fraction-like interval maps in \cite{LuzziMarmi}.  See   \cite{Tiozzo} for results of convergence of Birkhoff sum approximations in a very related setting.    In fact, the Birkhoff sum approximations led to discovering the error in \cite{BKS}  which we point out and correct  in \S~\ref{ss:Correction}.  Note that Figure~\ref{f:BirkEnt} suggests that there is a change in concavity of the functions as $n$ goes from small to large values; we have not pursued that matter here.

\subsection{Outline}    Section~\ref{s:back} is a rather long background section, where we give elementary definitions and reminders through the less elementary matters of Arnoux's method and the setting of \cite{CaltaKraaikampSchmidt, CaltaKraaikampSchmidtContinEntrop, CaltaKraaikampSchmidtPfsErgodicity}, as well as state a theorem of Nakada \cite{NakadaLenstra}.    Section~\ref{s:ResultsMisTwo} presents the results in the setting of $m=2$; the section begins with a statement of results and then a somewhat detailed outline of its subsections. Section~\ref{s:the Shift} gives the shift from $\Omega_{2,n,1}$ to $\Omega_{3,n,1}$ and its implications.  Section~\ref{s:theConjectureHolds!} gives the final lines of the proof of Theorem~\ref{t:Main}, thus showing that the conjecture from \cite{CaltaKraaikampSchmidtContinEntrop} holds.

\section{ Background}\label{s:back}

 We give background, including repeating background from \cite{CaltaKraaikampSchmidt, CaltaKraaikampSchmidtContinEntrop, CaltaKraaikampSchmidtPfsErgodicity}.    
 
\subsection{The groups, the intervals, and the functions} 
   We consider  the following triangle Fuchsian groups.   Fix integers $n\ge m \ge 2$ (where if $m=2$ then $n>m$), and let $\mu = \mu_m = 2 \cos \pi/m, \,\nu = \nu_n =  2 \cos \pi/n$ as well as  $t= \mu+\nu$.  Thus, 

\[ t  := t_{m,n} = 2 \cos \pi/m + 2 \cos \pi/n.\]
  
We denote by  $G_{m,n}$ the group generated by

\begin{equation}\label{e:generators} 
A = \begin{pmatrix} 1& t\\
                                      0&1\end{pmatrix},\, B = \begin{pmatrix} \nu& 1\\
                                      -1&0\end{pmatrix},\,   C = \begin{pmatrix} -\mu& 1\\
                                      -1&0\end{pmatrix}\,,
\end{equation}

\medskip
\noindent
and note that $C= AB$.    We work projectively,  hence $B, C$ are of order $n,m$ respectively while $A$ is of infinite order.    That is, $G_{m,n}$ is a Fuchsian triangle group of signature $(0; m,n,\infty)$.

 \bigskip
 Fix  $\alpha \in [0,1]$ and define 
\begin{equation}\label{e:intervals} 
 \mathbb I_{m,n, \alpha}  := \mathbb I_{\alpha} = [\,(\alpha - 1)t, \alpha t\,)\,.
 \end{equation}
\bigskip 

\noindent
Let 
\begin{equation}\label{e:maps} 
  T_{\alpha} = T_{m,n,\alpha}: x \mapsto A^{k} C^{l}\cdot x, 
\end{equation}
\bigskip 

\noindent
 where as usual, any  $2 \times 2$ matrix {\small$\begin{pmatrix}a & b \\ c & d\end{pmatrix}$} acts on real numbers by {\small$\begin{pmatrix}a & b \\ c & d\end{pmatrix} \cdot x = \dfrac{a x + b}{c x + d}$}, and  
 
\begin{itemize} 
\item  $l\in \mathbb N$ is minimal such that $C^{l} \cdot x \notin \mathbb I$;

\medskip 
\item  \[k = -\lfloor (C^{l} \cdot x)/t + 1 - \alpha\, \rfloor\,.\]

\end{itemize}

\medskip 
\noindent
We consider $T_{\alpha}$ as a map on the closed interval taking values in the half-open interval $ \mathbb I_{\alpha}$,
\[  T_{\alpha}:  [\,(\alpha - 1)t, \alpha t\,] \to \mathbb I_{\alpha}\,.\]
When $\alpha = 0$ and $(m,n) = (3, n)$ this gives the (unaccelerated) maps treated in \cite{CaltaSchmidt},  the maps with  $(m,n) = (3, n)$ and $0<\alpha<1$ are studied in some detail in  \cite{CaltaKraaikampSchmidt, CaltaKraaikampSchmidtPfsErgodicity}.    That the $T_{m,n,\alpha}$ lead to continued fraction-like expansions of real numbers is shown in [\cite{CaltaKraaikampSchmidt}, \S~1.5].

 \subsection{Two fundamental formulas for entropy}
The Kolmogorov--Sinai  measure theoretic entropy, which as stated above we refer to simply as entropy  and usually denote in the form $h(T)$ ---  is an invariant of a dynamic system, which roughly speaking measures its complexity.   Rohlin introduced the notion of the natural extension system to aid in the study of entropy  and  showed that the original system and its natural extension share entropy values.   

For a dynamical system $(X, T, \mathscr B, \mu)$ of finite measure and a subset $E \subset X$ of positive measure, the {\em induced transformation} on $E$ is  $T_E: E \to E$ given by $T_E(x) = T^k(x)$ where $k \in \mathbb N$ is minimal such that $T^k(x) \in E$.  (By the Poincar\'e Recurrence Theorem, the set of $x \in E$ such that there is some such $k$ has full measure in $E$, and in fact one defines $T_E$ to be the identity on the complement of this subset.)  We set $\mu_E$ to be the restriction to $E$ of $\mu$ scaled by $1/\mu(E)$.   This allows one to define a dynamical system for $T_E$.   The following is a key tool in the study of planar extensions. {\em Abramov's Formula}   states that the entropy of the induced system on $E$ is the quotient of the entropy of the original system divided by the measure of $E$, in short
\begin{equation}\label{e:AbramForm}
 h(T_E)= h(T)/\mu(E).
\end{equation}

One also has {\em Rohlin's Entropy Formula} for an interval map $T: \mathbb I \to \mathbb I$ which is ergodic with respect to an invariant probability measure $\nu$  and such that the derivative $T'$ exists $\nu$-almost everywhere,  (see say \cite{DenkerKellerUrbanski}):
\begin{equation}\label{e:rohlinEnt}
h(T) = \int_{\mathbb I} \ln \vert\, T'(x)\,\vert\, d\nu.
\end{equation}

\subsection{Piecewise M\"obius maps, cylinders, planar extensions}\label{ss:planarBckgrd}  
For  
\begin{equation}\label{e:justM}
M = \begin{pmatrix} a&b\\c&d\end{pmatrix}
\end{equation}
 in $\text{SL}(2, \mathbb R)$    and $z \in \mathbb C$, one has $M\cdot z = \frac{az+b}{cz+d}$.   This fractional linear action allows us to identify $M$ with a M\"obius function.   We use mainly the restriction of this action to the real numbers.   Note that $M$ and $-M$ act identically, thus we may consider that  
the group $\text{PSL}(2, \mathbb R)$,  the quotient group of $\text{SL}(2, \mathbb R)$ by  $\{\pm \text{I}\}$,  is acting.

We consider various functions $T$ on a subinterval $\mathbb I \subset \mathbb R$ with values in $\mathbb I$ such that  
there is a partition $\mathbb I= \cup_{\beta \in \mathcal B}\, K_\beta$ with  $T(x) = M_{\beta}\cdot x$ for all $x \in K_\beta$, where each $M_{\beta}$ is in $\text{SL}_2(\mathbb R)$ and acts fractional linearly.  (In general, one can also allow $M$ of negative determinants, but this is unnecessary in this paper.)   We will assume that each $K_{\beta}$ is an interval and is taken as large as possible.   We call these $K_\beta$ the (rank one) {\em cylinders} for $T$.   Similarly, a cylinder of rank $m>1$ is the largest interval on which $T^m$ is given by the action of some $M_{\beta_m}\cdots M_{\beta_1}$.    We say that the  cylinder $K_{\beta}$ is {\em full} if $T(K_{\beta}) = \mathbb I$.

The standard number theoretic planar map associated to   $M \in \text{SL}_2(\mathbb R)$ is  
\[
\mathcal{T}_M(x,y) :=  \bigg( M\cdot x,  RMR^{-1}\cdot y\,\bigg)\, \quad  \text{for}\;\; x \in \mathbb R\setminus \{M^{-1}\cdot \infty\},\ y \in \mathbb R\setminus \{(RMR^{-1})^{-1}\cdot \infty\}\,,
\]
where 
\begin{equation}\label{e:justR}
R = \begin{pmatrix}0&-1\\1&0\end{pmatrix}.
\end{equation}
 
An elementary Jacobian matrix calculation verifies that the measure  $\mu$ on $\mathbb R^2$ given by
\begin{equation}\label{e:muDefd}
d\mu = \dfrac{dx\, dy}{(1 + xy)^2}
\end{equation}
is (locally) $\mathcal{T}_M$-invariant.  

 For  a piecewise M\"obius interval map  $T$ we then set  
\begin{equation}\label{e:2DlocMap}
\mathcal{T}(x, y) = \mathcal{T}_{M_{\beta}}(x,y)    \quad  \text{whenever}\;\; x \in K_\beta,\ y \in \mathbb R\setminus \{N^{-1}\cdot \infty\}\,.
\end{equation}
Suppose that  $\Omega \subset \mathbb R^2$ projects onto the interval $\mathbb I$ and is a domain of bijectivity  of $\mathcal T$,  that is $\mathcal T$ is bijective on $\Omega$ up to $\mu$-measure zero.  Let $\mathcal B$ be the Borel algebra of $\Omega$, we then call the system $(\mathcal T, \Omega, \mathcal B, \mu)$  a {\em planar extension} for $T$.   We will occasionally abuse this terminology and say that $\Omega$ or $\mathcal T$ is the planar extension.  Similarly, we say  $T$ has a {\em positive planar extension}  when $T$ has a planar extension with $0<\mu(\Omega)<\infty$.

We will have occasional need to use a planar extension of $T$ for which the invariant measure is Lebesgue measure.   Let 
\begin{equation} \label{eqZmap}
\mathcal Z(x,y) = (x, y/(1+xy))
\end{equation}
and  for each $M= \begin{pmatrix} a&b\\c&d\end{pmatrix}$ as above, let  $\widehat{\mathcal T}_M = \mathcal Z \circ \mathcal T_M\circ \mathcal Z^{-1}$.  Then 
\begin{equation} \label{eqLebMap}
\widehat{\mathcal T}_M(x,y) = ( M\cdot x, \,  (c x+d)^2 y - c (cx + d) \,).
\end{equation}

\phantom{here}\\  
\noindent
{\bf Convention}   Throughout, we will allow ourselves the minor abuse of using adjectives such as injective, surjective and bijective to mean in each case {\em up to measure zero}, and thus similarly where we speak of disjointness and the like we again will assume the meaning being taken to include the proviso ``up to measure zero" whenever reasonable.\\  
 
\subsection{Hyperbolic upper half-plane, geodesic flow, Fuchsian groups}\label{ss:ModGpSurfGeoFlo}
The upper half-plane is $\mathbb H= \{z=x+iy \,\vert\, x,y \in \mathbb R, y>0\}$, equipped with the hyperbolic metric on $\mathbb H$, whose element of arclength squares to be $ds^2 = (dx^2 + dy^2)/y^2$.   The group $\text{SL}(2, \mathbb R)$ acts by fractional linear maps, which are isometries of 
$\mathbb H$.      The geodesics of $\mathbb H$ are either vertical lines and semi-circles, whose naive extensions meet the boundary real line perpendicularly.

The action of $\text{SL}_2(\mathbb R)$ on $\mathbb H$ extends to an action on the unit tangent bundle $T^1 \mathbb H$.   This  allows one to identify $T^1 \mathbb H$ with $\text{PSL}_2(\mathbb R)$.    The {\em geodesic flow} on the unit tangent bundle is an action of the real numbers: given a nonnegative real number $t$ and a unit tangent vector $u$, the unit tangent vector   $t\cdot u$ is obtained by following the unique geodesic passing through $u$ in the positive direction for arclength $t$ and taking the unit tangent vector to this oriented geodesic at the new basepoint.  If $t<0$, we follow the geodesic in the opposite direction.    In terms of $\text{PSL}_2(\mathbb R)$, the geodesic flow for time $t$  is given by  sending $A\in \text{PSL}_2(\mathbb R)$ to $A g_t$,   where $g_t = \begin{pmatrix} e^{t/2}&0\\0&e^{-t/2}\end{pmatrix}$.

  A {\em Fuchsian group} $\Gamma$ is a discrete (with respect to the natural topology) subgroup of $\text{PSL}(2, \mathbb R)$ and in particular, it is a subgroup acting properly discontinuously on $\mathbb H$.   The quotient $\Gamma\backslash \mathbb H$ has a unit tangent bundle, given by equivalence classes (thus $\Gamma$-orbits) of unit tangent vectors of $\mathbb H$.  The geodesic flow descends so that for $t\in \mathbb R$,    $[A] \in \Gamma\backslash \text{PSL}_2(\mathbb R)$ is sent to $[A g_t]$, where the square brackets here denote $\Gamma$-cosets represented by the given elements.

\subsection{Volume of unit tangent bundles}  The volume of the unit tangent bundle of a surface uniformized by a Fuchsian group $\Gamma$ equals $\pi$ times  the hyperbolic area of a fundamental domain for $\Gamma$.  Each of the $G_{m,n}$ with $2 \le m$ and $3 \le n$   is a hyperbolic triangle group.  The fundamental domain is then the union  of a hyperbolic triangle of angles $0, \pi/m/ \pi/n$ with the reflection over one of its edges.   The area of the fundamental domain is twice that of the triangle, but the area of a hyperbolic triangle equals $\pi$ minus the sum of its angles.  Therefore,  

\begin{equation} \label{e:volForm}
\text{vol}(T^1(G_{m,n}\backslash \mathbb H)) = 2 \pi \; (\pi - \pi/m - \pi/n) = 2 \pi^2 \dfrac{mn - m- n}{mn}\,.
\end{equation}   
In particular,   $\text{vol}(T^1(G_{2,n}\backslash \mathbb H)) =   \pi^2 \,(n - 2)/n$ and $\text{vol}(T^1(G_{3,n}\backslash \mathbb H)) = 2 \pi^2 (2n - 3)/(3n)$.

\subsection{Arnoux's method for cross sections to a geodesic flow}\label{ss:ArnouxMethod}

\subsubsection{Cross sections to a measurable flow,  Arnoux's transversal}\label{sss:crossFlowArnouxTrans}        
Let $(X, \mathscr B, \mu)$ be a measure space and $\Phi_t$ a measure preserving flow on $X$,  that is $\Phi:  X \times \mathbb R \to X$ is a measurable function such that for $\Phi_t(x) = \Phi(x,t)$, $\Phi_{s+t} = \Phi_s \circ \Phi_t$.   Then $\Sigma \subset X$ is a  {\em measurable cross section} for  the flow $\Phi_t$ if:     (1)  the flow orbit of almost every point meets $\Sigma$; (2) for almost every $x\in X$ the set of times $t$ such that $\Phi_t(x) \in \Sigma$ is a discrete subset of $\mathbb R$; (3)   for every Borel subset (in the subspace topology)   $A \subset \Sigma$ and for every  $\tau> 0$, {\em flow box}  $A_{[0,\tau]}\ :=  \{ \Phi_t(A)\, \mid \, A \in \Sigma,  t \in [0, \tau]\,\}$ is $\mu$-measurable.  

The {\em return-time function} $r= r_\Sigma$ is $r(x) =\inf\{t>0:\Phi_t(x)\in\Sigma\}$ and the {\em return map} $R:\Sigma\to\Sigma$ is defined by $R(x)= \Phi_{r(x)}(x)$. 
The {\em induced measure} $\mu_\Sigma$ on $\Sigma$ is defined from flow boxes: one sets 
$\mu_\Sigma(A)= \frac{1}{\tau} \mu(A_{[0,\tau]})$ for any  $0<\tau<\inf_{x\in A}\{r(x)\}$.

\medskip

\noindent
{\bf Convention}:  In all that follows,  we write {\em cross section} to denote measurable cross-section.
 \smallskip

A flow $\Phi_t$ is {\em ergodic} if for any invariant set either it or its complement is of measure zero.    
  The fundamental result of Hopf states that if a Fuchsian group $\Gamma$ is of finite covolume, then the geodesic flow is ergodic with respect to the natural measure on the unit tangent bundle of $\Gamma\backslash \mathbb H$; see his reprisal in \cite{Hopf}.    A flow is {\em recurrent} if the $\Phi$-orbit of almost every point meets any positive measure set infinitely often.  By the Poincar\'e Recurrence Theorem, an ergodic flow on a finite measure space is recurrent.    Given a cross-section, its first return-time transformation  $(\Sigma, \mathscr B_\Sigma, \mu_\Sigma, R_\Sigma)$ is ergodic  whenever the flow is.  
  
 There is a natural measure on the unit tangent bundle $T^1 \mathbb H$:  the  {\em Liouville measure} is given  as the product of the hyperbolic area measure on $\mathbb H$ with the length measure on the circle of unit vectors at any point.   Liouville measure is (left- and right-) $\text{SL}_2(\mathbb R)$-invariant,  and  thus gives Haar measure on $G$.   In particular, this measure is invariant for the geodesic flow.     With standard normalizations, Liouville measure agrees with the Riemannian volume form.  These both descend modulo any Fuchsian group $\Gamma$.

  Arnoux, see say \cite{ArnouxSchmidtCross},  found an elementary manner to map Lebesgue planar extensions into the unit tangent bundle of appropriate surfaces so as to find cross sections to the geodesic flow.   This often allows one to express   an original interval map's system as a factor.  For $(x,y) \in \mathbb R^2$, let 
\[ A(x,y) =   \begin{pmatrix} x&xy-1\\1&y\end{pmatrix}.\]  
  {\em Arnoux's transversal}, $\mathscr A$, is the projection to $\text{PSL}_2(\mathbb R)$ of the set of all  $A(x,y)$. 
  The complement to   $\{[A g_t] \,\vert\, A\in \mathscr A, t \in \mathbb R\}$  is a null set for  Liouville measure.  Furthermore Liouville measure restricts to   $\mathscr A$ to be a constant multiple of $dx\,dy\, dt$.

We repeat a definition and observation from \cite{ArnouxSchmidtCommCF}. 
%-------------------------------------------------------------------------------
\begin{Def}\label{def:Tau}  For $M \in \text{SL}(2, \mathbb R)$ and $x \in \mathbb R\,$ such that $M \cdot x \neq \infty\,$, let 
\[\tau(M,x) := -2 \,\log |c x + d\,|\,\,\]
where $(c,d)$ is the bottom row of  $M\,$ as usual.   
\end{Def}
%-------------------------------------------------------------------------------

Elementary calculation shows that $\tau$ induces on $\text{PSL}(2, \mathbb R)$ a {\em cocycle}, in the following sense:
\[\tau(MN, x) = \tau(M, Nx) + \tau(N, x)\]
whenever all terms are defined and we choose each projective representative such that the corresponding $cx + d$ is positive.

  For ease of legibility, set $t_0 =  \tau(M,x)$.  Then  matrix multiplication shows
\begin{equation}\label{e:arnouxFlow}  M A  g_{t_0} =  A(\,\widehat{\mathcal T}_M(x,y)\,).\end{equation}

  Given a Fuchsian group $\Gamma$, we have the function
\[
\begin{aligned}
 \mathscr P =  \mathscr P_{\Gamma}: \mathbb R^2 &\to \Gamma\backslash\text{PSL}(2, \mathbb R)  \\
               (x,y) &\mapsto  [A(x,y)],
\end{aligned}               
 \]
 where again square brackets denote cosets.   Note that this is measure preserving when we use Lebesgue measure on $\mathbb R^2$ and the measure given by $dx\,dy$ inherited by  the projection $ \mathscr A_{\Gamma}$, of $\mathscr A$ to $\Gamma\backslash\text{PSL}(2, \mathbb R)$.   When we use $\mathscr P$ we will often commit the abuse of writing $A$ to represent the corresponding coset.

\subsubsection{Arnoux's method}    
  Now suppose that $T$ is a piecewise (determinant one) M\"obius interval map.  
 We say that the {\em group associated to $T$} is the group $\Gamma_T$ generated by the M\"obius transformations of $T$; thus,    $\Gamma_T = \langle M_\beta, \beta \in \mathcal B  \rangle \subset \text{PSL}_2(\mathbb R)$.      
 
 If $T$ has a positive planar extension, we conjugate via $\mathcal Z$  of \eqref{eqZmap} to the Lebesgue planar extension of $T$.  We then apply the measure preserving $\mathscr P$, with $\Gamma = \Gamma_T$, to $\mathcal Z (\Omega)$.  When $\Gamma_T$ is a Fuchsian group, this is a subset of $T^1(\Gamma_T\backslash \mathbb H)$;  using the fact that any Fuchsian group has countably many elements,  in \cite{ArnouxSchmidtCommCF} it is shown that $\mathscr P$ is injective up to measure zero on $\mathcal Z (\Omega)$. 
 
 Arnoux's method is illustrated by the following result.   When  $T$ is a piecewise determinant one M\"obius interval map and $x \in \mathbb I$, we let $\tau(x) = \tau_T(x) = \tau(M, x)$ where $T(x) = M\cdot x$.    The result here combines (\cite{ArnouxSchmidtCommCF} Theorem~5.4, Corollary~1, and Proposition~4). 
%------------------------------------------------------------------------------------------ 
\begin{TheirThm}\label{t:detOneSetting} [Arnoux's Method] Let $T$ be a piecewise determinant one M\"obius interval map with positive planar extension,    and suppose that $\Gamma_T$ is Fuchsian group  of finite covolume.   Then 
\[ \Sigma = \mathscr P_{\Gamma}(\,\mathcal Z (\Omega)\,)\]
is a cross section to the geodesic flow on $T^1(\Gamma_T\backslash \mathbb H)$.  Furthermore,  
the system defined by 
\[\begin{aligned}
\phi: \Sigma \;\;\;&\to \;\Sigma\\
  {}      [A(x,y)] &\mapsto [M A(x,y) g_{\tau(x)}],
 \end{aligned}
 \]
 with $M$ such that $T(x) = M\cdot x$, 
 is an extension of $T: \mathbb I \to \mathbb I$.    Moreover, $\phi$ agrees with the first return map of the geodesic flow to $\Sigma$  if and only if $T$ is ergodic,   expansive,  and with entropy satisfying $h(T) \mu(\Omega_f) =  \emph{vol}(\,   T^1( \Gamma_T\backslash \mathbb H)\, )$.  When this holds, the   first return to $\Sigma$ gives a natural extension to $T$. 
\end{TheirThm}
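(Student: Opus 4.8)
The plan is to push the planar picture into the unit tangent bundle, read $\phi$ off the cocycle identity \eqref{e:arnouxFlow}, and then compare $\phi$ with the genuine first return of the geodesic flow by means of Rohlin's formula and the flow-box identity. First I would pass to the Lebesgue model: set $\widehat{\mathcal T}=\mathcal Z\circ\mathcal T\circ\mathcal Z^{-1}$ as in \eqref{eqZmap}--\eqref{eqLebMap}; since $\Omega$ is a domain of bijectivity of $\mathcal T$, the set $\mathcal Z(\Omega)$ is one for $\widehat{\mathcal T}$ and has $\mathrm{Leb}(\mathcal Z(\Omega))=\mu(\Omega)\in(0,\infty)$. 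Then I would apply $\mathscr P=\mathscr P_{\Gamma_T}$: it is measure preserving onto its image in $\mathscr A_{\Gamma_T}$ with the $dx\,dy$ measure inherited from Liouville, and, $\Gamma_T$ being a countable Fuchsian group, it is injective up to null sets on $\mathcal Z(\Omega)$. This yields the positive-measure Borel set $\Sigma=\mathscr P_{\Gamma_T}(\mathcal Z(\Omega))\subset T^1(\Gamma_T\backslash\mathbb H)$ and a measure-preserving bijection (mod null) $\mathscr P\circ\mathcal Z\colon(\Omega,\mu)\to(\Sigma,\mu_\Sigma)$, where $\mu_\Sigma$ is the flow-box induced measure; with Liouville normalized to restrict to $\mathscr A$ as $dx\,dy\,dt$ one has $\mu_\Sigma(\Sigma)=\mu(\Omega)$ and $\int_\Sigma f(x)\,d\mu_\Sigma=\int_\Omega f(x)\,d\mu$ for any function of the first coordinate alone.

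Next I would feed \eqref{e:arnouxFlow} through this isomorphism. On the cylinder on which $T(x)=M\cdot x$, one has $\tau(x):=\tau(M,x)=\log|T'(x)|$, and \eqref{e:arnouxFlow} reads $MA(x,y)g_{\tau(x)}=A(\widehat{\mathcal T}(x,y))$; since $M\in\Gamma_T$ this says that flowing for time $\tau(x)$ from $[A(x,y)]\in\Sigma$ lands at $[A(\widehat{\mathcal T}(x,y))]\in\Sigma$, i.e.\ $\phi([A(x,y)])=\Phi_{\tau(x)}([A(x,y)])$, which is well defined by injectivity of $\mathscr P$ on $\mathcal Z(\Omega)$; projecting to the first coordinate intertwines $\phi$ with $T$, giving the ``extension'' assertion. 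Applying the same identity to $\mathcal T^{-1}$, the backward flow from $[A(x,y)]$ first re-enters $\Sigma$ after time $\tau$ of the first coordinate of $\mathcal T^{-1}(x,y)$; iterating both ways, one checks that the set of times at which the flow orbit of $[A(x,y)]$ meets $\Sigma$ is exactly the bi-infinite, discrete set $\{\sum_{j=0}^{k-1}\tau(T^jx):k\ge0\}\cup\{-\sum_{j=1}^{k}\tau(\pi_1\mathcal T^{-j}(x,y)):k\ge1\}$, where $\pi_1$ denotes projection to the first coordinate. Together with Hopf ergodicity of the geodesic flow on the finite-covolume $T^1(\Gamma_T\backslash\mathbb H)$ --- so the orbit of a.e.\ point meets the positive-measure set $\Sigma$, infinitely often --- and the transversality of $\mathscr A$ to the flow, this verifies the three cross-section axioms and gives a well-defined first-return map $R_\Sigma$ with return-time function $r$.

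For the equivalence I would integrate a pointwise comparison of $\tau$ and $r$. Whenever $\tau(x)>0$, the previous paragraph shows $\tau(x)$ is one of the positive return times of $[A(x,y)]$, so $\tau(x)\ge r([A(x,y)])$, with equality precisely when $\phi=R_\Sigma$ at that point. The flow-box decomposition $T^1(\Gamma_T\backslash\mathbb H)=\bigsqcup_{p\in\Sigma}\{\Phi_t(p):0\le t<r(p)\}$ gives Kac's identity $\int_\Sigma r\,d\mu_\Sigma=\text{vol}(T^1(\Gamma_T\backslash\mathbb H))$, while, the marginal of $\mu$ on $\Omega$ being the $T$-invariant measure $\nu$ and Rohlin's formula \eqref{e:rohlinEnt} (valid since $T$ is ergodic) giving $h(T)=\int_{\mathbb I}\log|T'|\,d\nu$, one has $\int_\Sigma\tau\,d\mu_\Sigma=\int_\Omega\log|T'(x)|\,d\mu=h(T)\,\mu(\Omega)$. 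Hence, as soon as $T$ is expansive (so $\tau>0$ a.e.), $h(T)\,\mu(\Omega)=\int_\Sigma\tau\,d\mu_\Sigma\ge\int_\Sigma r\,d\mu_\Sigma=\text{vol}(T^1(\Gamma_T\backslash\mathbb H))$, with equality if and only if $\tau=r$ $\mu_\Sigma$-a.e., i.e.\ $\phi=R_\Sigma$ a.e.; this is the ``$\Leftarrow$'' direction. Conversely, if $\phi=R_\Sigma$ a.e.\ then $\tau=r>0$ a.e., so $|T'|>1$ a.e.\ ($T$ expansive), and $T$ is a factor, via the first-coordinate projection, of $R_\Sigma$, which is ergodic whenever the flow is (Hopf), so $T$ is ergodic; Rohlin then yields $h(T)\,\mu(\Omega)=\int_\Sigma\tau\,d\mu_\Sigma=\int_\Sigma r\,d\mu_\Sigma=\text{vol}(T^1(\Gamma_T\backslash\mathbb H))$. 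When all this holds, $\mathscr P\circ\mathcal Z$ identifies $(\Sigma,R_\Sigma,\mu_\Sigma)$ with the planar system $(\Omega,\mathcal T,\mu)$, which (marginal invariant, $\mathcal T$ bijective, first coordinate generating under $\mathcal T$) is a natural extension of $(\mathbb I,T,\nu)$, hence so is $R_\Sigma$.

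I expect the real work to lie in the cross-section bookkeeping of the second step: proving that a flow orbit meets $\Sigma$ in \emph{exactly} the two-sided $\mathcal T$-orbit, and discretely, which is what makes $\Sigma$ a genuine cross section and underlies the flow-box identity used for $\int_\Sigma r\,d\mu_\Sigma$. This is where the injectivity of $\mathscr P$ on $\mathcal Z(\Omega)$, the bijectivity of $\mathcal T$ on $\Omega$, and the transversality of $\mathscr A$ must be combined carefully, with attention to behaviour near the cusp of $\Gamma_T\backslash\mathbb H$. By contrast, once a clean cross-section structure is in hand, the entropy/volume dichotomy of the third step is essentially a one-line integration against the inequality $\tau\ge r$.
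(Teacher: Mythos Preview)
Your proposal is correct and follows essentially the same route as the paper's sketch: push the planar extension into $T^1(\Gamma_T\backslash\mathbb H)$ via $\mathscr P\circ\mathcal Z$, read $\phi$ off the cocycle identity \eqref{e:arnouxFlow}, invoke Hopf ergodicity, and then compare $\int_\Sigma\tau\,d\mu_\Sigma=h(T)\mu(\Omega)$ (Rohlin) with $\int_\Sigma r\,d\mu_\Sigma=\mathrm{vol}$ (flow-box/Kac) via the pointwise inequality $\tau\ge r$. The paper merely sketches this and defers details to \cite{ArnouxSchmidtCommCF,ArnouxSchmidtNatExt}; your write-up makes the Kac identity and the $\tau\ge r$ comparison explicit, though your claim that the hit-time set is \emph{exactly} the bi-infinite $\mathcal T$-orbit is stronger than needed (and than what holds before you know $\phi=R_\Sigma$)---fortunately your subsequent argument only uses the weaker $\tau\ge r$.
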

%------------------------------------------------------------------------------------------ 
The initial statement of the theorem is shown by considering  \eqref{e:arnouxFlow} with the projection.    The subset $\{[A(x,y) g_t] \,\vert\, A(x,y) \in \Sigma, 0\le t\le \tau(x)\} \subset T^1(\Gamma_T\backslash \mathbb H)$ is invariant under the geodesic flow;  Hopf's result implies that this is all of $T^1(\Gamma_T\backslash \mathbb H)$ up to measure zero.       If $\phi$ agrees with the first return map,  then the map to $\mathscr A$ is such that the flow is expansive in the $x$-direction and contracting in the $y$-direction ---  recall that the map of \eqref{eqLebMap} preserves Lebesque measure  ---  and one can argue as in \cite{ArnouxSchmidtCommCF} that the first return system is indeed the natural extension.   The ergodicity of the flow implies that $\phi$ is ergodic, it then follows that $\widehat{\mathcal T}, \mathcal T$ and hence $T$ itself are ergodic.   The veracity of the equation involving the entropy is also in  \cite{ArnouxSchmidtCommCF}; in brief,  $\tau(x)$ is simultaneously  the arclength of the geodesic path following the flow line from $ [A(x,y)]$ to its image under $\phi$ and the (piecewise form of the) integrand in Rohlin's formula \eqref{e:rohlinEnt}. Recall that $\nu$ is the marginal measure of $\mu$ on $\Omega$, and that both $\mathcal Z$ and $\mathcal P_{\Gamma}$ are measure preserving.  That in this setting one has a natural extension can be shown, for instance, using the main result of \cite{ArnouxSchmidtNatExt}.

\subsection{Nakada's entropy formula for   Rosen fractions}   Recall that, for each $n \ge 3$,   $\nu_n = 2 \cos \pi/n$;  set $\mathbb I_n =   [-\nu_n/2, \nu_n/2)$.  
Letting, as in \cite{NakadaFord}, $\lfloor  x \rfloor_n = a  \nu_n $ such that $x -a \nu_n \in  \mathbb I_n$, Rosen's \cite{Rosen} continued fraction  map on $\mathbb I_n$ is given by $f(x) = f_n(x)= |1/x| - \lfloor |1/x| \rfloor_n$ (with $x = 0$  fixed).    Rosen introduced these maps to study the Hecke triangle Fuchsian groups, the $G_{2,n}$ in our notation. 

Using a combination of hyperbolic geometrical considerations, Diophantine approximation, and ergodic theory,   Nakada \cite{NakadaLenstra} obtained the following.    We slightly reformulate.
 %-------------------------------------------------------------------------------
\begin{TheirThm}[Nakada 2010]\label{t:HitoshiShowsGetHalf}  Let  $f_n$ denote the Rosen continued fraction map of integral index $n \ge 3$ and $\Omega_n$ the planar domain as given in \cite{BKS}.   Then  the product of the entropy of $f_n$ times   $\mu(\Omega_n)$  equals $1/2$ times the volume of the unit tangent bundle of the surface uniformized by the Hecke triangle Fuchsian group of index $n$, $G_{2,n}$. 
\end{TheirThm}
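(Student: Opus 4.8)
The plan is to convert the asserted identity into a statement about a Rohlin integral and then evaluate that integral geometrically, running Arnoux's method (Theorem~\ref{t:detOneSetting}) in the orientation-reversing setting that $f_n$ forces on us; this is essentially Nakada's argument, repackaged so that his three ingredients --- hyperbolic geometry, Diophantine approximation, ergodic theory --- are absorbed into Arnoux's machinery. First I would note that, by \cite{BKS} (as corrected in \S~\ref{ss:Correction}), the set $\Omega_n$ is a bijectivity domain for the planar extension $\mathcal T$ of $f_n$, so that the normalized marginal $\nu := \mu|_{\Omega_n}/\mu(\Omega_n)$ is the $f_n$-invariant probability measure. Every branch of $f_n$ satisfies $|f_n'(x)| = 1/x^2$, i.e. $\ln|f_n'(x)| = -2\ln|x| = \tau(M,x)$ where $M$ is the branch matrix (of determinant $\pm 1$, with $\tau$ given by the same formula). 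Since $f_n$ is ergodic with respect to $\nu$ and $C^1$ on each branch, Rohlin's formula \eqref{e:rohlinEnt} gives
\[
h(f_n)\,\mu(\Omega_n) \;=\; \int_{\Omega_n}\ln|f_n'(x)|\,d\mu \;=\; \int_{\Omega_n}\tau_{f_n}(x)\,d\mu(x,y),
\]
so it remains to show that this Rohlin integral equals $\tfrac12\,\text{vol}\big(T^1(G_{2,n}\backslash\mathbb H)\big)=\pi^2(n-2)/(2n)$ by \eqref{e:volForm}.

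Next I would identify the group $\Gamma_{f_n}$ generated by the branches of $f_n$. The branches on $x<0$ contribute $x\mapsto -1/x$ and the translations $x\mapsto x+k\nu_n$, which generate the Hecke group $G_{2,n}$; the branches on $x>0$ contribute in addition $x\mapsto 1/x$, which is the boundary action of the reflection $z\mapsto 1/\bar z$ in the unit circle and has determinant $-1$. Hence $\Gamma_{f_n}$ is the full $(2,n,\infty)$ triangle group inside $\text{PGL}_2(\mathbb R)$, with $G_{2,n}$ as its orientation-preserving subgroup of index two. Its quotient orbifold is a single hyperbolic triangle of angles $0,\pi/2,\pi/n$, of area $\pi(n-2)/(2n)$, so the Liouville volume of $T^1\mathbb H/\Gamma_{f_n}$ is $\pi^2(n-2)/(2n) = \tfrac12\,\text{vol}(T^1(G_{2,n}\backslash\mathbb H))$: this index-two passage is the source of the factor $\tfrac12$. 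The geodesic flow on $T^1\mathbb H/\Gamma_{f_n}$, being a $\mathbb Z/2$-factor of the ergodic, finite-covolume geodesic flow on $T^1(G_{2,n}\backslash\mathbb H)$, is itself ergodic, so Hopf's theorem applies.

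I would then run the $\text{PGL}_2$-version of Arnoux's method: conjugate by $\mathcal Z$ of \eqref{eqZmap} to the Lebesgue planar extension of $f_n$, apply the measure-preserving map $\mathscr P_{\Gamma_{f_n}}$ to $\mathcal Z(\Omega_n)$ to obtain a measurable cross section $\Sigma$ to the geodesic flow on $T^1\mathbb H/\Gamma_{f_n}$, and check --- from the explicit cylinder structure of $f_n$ and the shape of $\Omega_n$ --- that the honest first-return map of the flow to $\Sigma$ is the map $\phi$ of Theorem~\ref{t:detOneSetting}, hence an extension of $f_n$, with return time $\tau_{f_n}(x)$. The flow-box (Kac) formula for the return time then gives
\[
\text{vol}\big(T^1\mathbb H/\Gamma_{f_n}\big) \;=\; \int_\Sigma \tau_{f_n}\,d\mu_\Sigma \;=\; \int_{\mathcal Z(\Omega_n)}\tau_{f_n}\,dx\,dy \;=\; \int_{\Omega_n}\tau_{f_n}\,d\mu,
\]
the middle equalities because $\mathscr P_{\Gamma_{f_n}}$ preserves $dx\,dy$, $\mathcal Z$ pushes $\mu$ to $dx\,dy$, and $\tau_{f_n}$ depends only on the first coordinate. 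Comparing with the first display yields $h(f_n)\mu(\Omega_n) = \text{vol}(T^1\mathbb H/\Gamma_{f_n}) = \tfrac12\,\text{vol}(T^1(G_{2,n}\backslash\mathbb H))$, as asserted.

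The main obstacle is the orientation-reversing branch: one must set up Arnoux's transversal and the section $\Sigma$ correctly in the unit tangent bundle of a non-orientable orbifold --- equivalently, lift everything to the orientable double cover $G_{2,n}\backslash\mathbb H$ and track the two-fold ambiguity --- and, crucially, verify that the flow's first return to $\Sigma$ induces $f_n$ itself rather than a strict acceleration; this last point is exactly the expansiveness hypothesis in Theorem~\ref{t:detOneSetting} and is where the precise shape of $\Omega_n$ and the fullness of the pertinent cylinders of $f_n$ enter. A purely computational alternative is to insert the known $\Omega_n$ and its invariant density into $\int_{\Omega_n}(-2\ln|x|)\,dx\,dy/(1+xy)^2$, integrate out $y$ to produce $\ln(1+xy)$ terms, and sum the resulting dilogarithmic contributions over the finitely many pieces of $\Omega_n$; this avoids the geometry but runs straight into the dilogarithm identities flagged in the excerpt, the point being that all dilogarithmic terms cancel and leave the clean value $\pi^2(n-2)/(2n)$.
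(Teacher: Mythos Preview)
The paper does not prove this theorem: it is stated as a background result attributed to Nakada \cite{NakadaLenstra}, with the phrase ``We slightly reformulate.''  No proof is given here; the paper uses the result as an external input, anchoring the identity $h\cdot\mu(\Omega)=\text{vol}$ at the single value $\alpha=1/2$ (see \S\ref{ss:symmRosen} and \eqref{e:symmRosenEntr}) and then propagating it to all $\alpha$ via the constancy established in Proposition~\ref{p:continuityOfEntropyOnIntervals} and Lemma~\ref{l:integralsMatch}.  Nakada's own argument proceeds through the Lenstra constant and Diophantine approximation, not through Arnoux's transversal.

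Your outline via a $\text{PGL}_2$-version of Arnoux's method is a reasonable alternative route and correctly identifies the source of the $\tfrac12$ as the index-two inclusion $G_{2,n}\subset\Gamma_{f_n}$.  But there is a real gap at the step ``check \dots\ that the honest first-return map of the flow to $\Sigma$ is the map $\phi$.''  In Theorem~\ref{t:detOneSetting} the criterion for $\phi$ to equal the first return is \emph{exactly} the entropy identity you are trying to establish, so you cannot appeal to that criterion without circularity.  A direct verification that no shorter return occurs --- i.e.\ that for every $(x,y)\in\Sigma$ the geodesic segment of length $\tau_{f_n}(x)$ meets $\Sigma$ only at its endpoints --- is genuine geometric work, not a consequence of ``the explicit cylinder structure of $f_n$ and the shape of $\Omega_n$'' alone; it amounts to controlling how $\mathscr P_{\Gamma_{f_n}}$ wraps $\mathcal Z(\Omega_n)$ into the quotient.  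You flag this yourself as ``the main obstacle,'' and it is: without it the argument only shows that the Rohlin integral is at most the volume.  Your dilogarithm alternative is honest about where the difficulty lies, but is likewise a sketch.
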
 
%-------------------------------------------------------------------------------

\subsection{Setting of \cite{CaltaKraaikampSchmidt}}\label{ss:CKS}      
In \cite{CaltaKraaikampSchmidt} one has a detailed study of the interval maps  $T_{3, n, \alpha}$.  We tersely summarize their notation and results.

 The parameter interval is naturally partitioned, 
 $[0, 1] = \{0\} \cup (0, \gamma_n)\cup [\gamma_n,  )\cup \{1\}$,  where $\alpha < \gamma_n$ if  $\forall x \in [\ell_0(\alpha), r_0(\alpha)]$ one has $T_{n,\alpha}(x) = A^k C\cdot x$ for some nonzero integer $k$. 
 We call the set of $\alpha< \gamma_n$  the {\em small $\alpha$}.  
For small $\alpha$, the only possibly non-full cylinders are the leftmost cylinder (whose left endpoint is $\ell_0(\alpha)\,$) and the rightmost (with right endpoint $r_0(\alpha)\,$).  
 
     For $x \in [\ell_0(\alpha), r_0(\alpha)]$ we call  $d_{\alpha}(x)= k$ the (simplified) $\alpha$-digit of  $x$, and say that $x$ lies in the cylinder $\Delta_{\alpha}(k)$.   To  each $x$ we then associate the sequence of its $\alpha$-digits.   The $T_{\alpha}$-orbits of the endpoints $\ell_0(\alpha), r_0(\alpha)$ are of extreme importance in our discussions,  one writes $\underline{b}_{[1, \infty)}^{\alpha}$ for the $\alpha$-digits  of $\ell_0(\alpha)$ and  replaces the lower bar by an upper bar,  $\overline{b}_{[1, \infty)}^{\alpha}$ for $r_0(\alpha)$.   We also label entries in these orbits by $\ell_0, \ell_1, \dots$ and $r_0, r_1, \dots$.   
   
\subsubsection{Expansions given words, synchronization intervals, tree of words}\label{ss:ExpanSynTree} 
For small $\alpha$,  we define parameter intervals on which an initial portion of the expansion of the $r_0(\alpha)$ are fixed.   That is, there is a common prefix of the  $\overline{d}_{[1, \infty)}^{\alpha}$.     Given a word $v = c_1 d_1 \cdots d_{s-1}c_s$ of positive integers, let  
 $\overline{S}(v) = \sum_{i = 1}^s\, c_i + \sum_{j=1}^{s-1} d_j$ 
and for each  $k \in \mathbb N$  let 
\begin{equation}\label{e:overDkV} 
\overline{d}(k,v) = k^{c_1}, (k+1)^{d_1},\cdots,  (k+1)^{d_{s-1}},k ^{c_s}, \;\;\; \text{when}\;\; v = c_1 d_1 \cdots d_{s-1}c_s.
\end{equation}   The corresponding subinterval of parameters is  
\begin{equation}\label{e:defIkV} 
I_{k,v} = \{ \alpha\mid \overline{d}_{[1, \overline{S}(v) ]}^{\alpha} = \overline{d}(k,v)\,\}\,;
\end{equation}
 in words, this is the set of $\alpha$ for which  the $\alpha$-expansion (in simplified digits)  of 
$r_0(\alpha)$ has $\overline{d}(k,v)$ as a prefix. 
Thus, setting   
\begin{equation}\label{e:rKv}
R_{k,v}  =    (A^kC)^{ c_s}\; (A^{k+1}C)^{d_{s-1}}(A^kC)^{c_{s-1}}\cdots (A^{k+1}C)^{d_1} (A^kC)^{c_1},
\end{equation}    for $\alpha \in I_{k,v}$ one has  $T_{\alpha}^{\overline{S}(v)}(\, r_0(\alpha) \,)= R_{k,v} \cdot  r_0(\alpha)$. The left endpoint of $I_{k,v}$ is denoted $\zeta_{k,v}$, one finds that $R_{k,v} \cdot  r_0( \zeta_{k,v}) =  \ell_0( \zeta_{k,v})$.   We define 
\begin{equation}\label{e:defJayKv} 
J_{k,v} = [\zeta_{k,v}, \eta_{k,v}),
\end{equation}
 where  with $L_{k,v} = C^{-1}ACR_{k,v}$ we have $L_{k,v}\cdot r_0(\eta_{k,v}) = r_0(\eta_{k,v})$.   Confer (\cite{CaltaKraaikampSchmidt}, Figure~4.2).

A main result of \cite{CaltaKraaikampSchmidt} is that each $J_{k,v}$ is what is now usually called a {\em matching interval}\,:   for all $\alpha$ in the interior of $J_{k,v}$, the $T_{\alpha}$-orbits of $\ell_0(\alpha)$ and of $r_0(\alpha)$ meet and do so in a common fashion.   We furthermore showed that the complement in $(0, \gamma_n)$ of the union of the $J_{k,v}$ is of measure zero.   Key to this was determining a maximal common prefix of the $\underline{d}_{[1, \infty)}^{\alpha}$ for $\alpha \in J_{k,v}$.   For that, we used 
$w = w_{n} = (-1)^{n-2}, -2,  (-1)^{n-3},-2$ and for $k$ fixed, we let  $\mathcal C = \mathcal C_k =(-1)^{n-3}, -2, w^{k-1}$ and $\mathcal D = C_{k+1}$, and defined 
$\underline{d}(k,v) = w^k, \mathcal C^{c_1-1} \mathcal D^{d_1}\cdots \mathcal D^{d_{s-1}}\mathcal C^{c_s}, (-1)^{n-2}$.    This is the common prefix, and also 
%---------------   
\begin{equation}\label{e:lowerDetaExpansion}
 \underline{d}{}^{\eta_{k,v}}_{[1,\infty)}  =  \overline{ w^k, \mathcal C^{c_1-1} \mathcal D^{d_1}\cdots \mathcal D^{d_{s-1}}\mathcal C^{c_s}, (-1)^{n-3},-2}\,,
 \end{equation} 
 %---------------
where the overline indicates a period.   We denote the length as a word in $\{-1, -2\}$ of $\underline{d}(k,v)$  by $\underline{S}(k,v)$.  There is an expression for $L_{k,v}$ related to $\underline{d}(k,v)$ in a manner similar to how \eqref{e:rKv} relates $R_{k,v}$ to  $\overline{d}(k,v)$.   The aforementioned synchronization is of the form
%---------------
\begin{equation}\label{e:synchronizationExplicit}
T_{\alpha}^{\overline{S}(v)+1}(\, r_0(\alpha) \,)= T_{\alpha}^{\underline{S}(v)+1}(\, \ell_0(\alpha) \,).
 \end{equation} 
 %---------------

The words $v$ that we use form a tree, $\mathcal V$.   
For this, we first define $v'$ such that any $\overline{d}_{[1, \infty)}^{\eta_{k,v}}$ equals  $\overline{d}(k,v(v')^{\infty}\,)$, where the latter is a slight abuse of notation.    The following is [\cite{CaltaKraaikampSchmidt}, Definition~4.8].

%-------------------------------------------------------------------------------------------
\begin{Def}\label{d:defVprime}   
For each $s > 1$ and each   word $v = c_1d_1\cdots c_{s-1}d_{s-1} c_s$, define 

\[ v' =  \begin{cases} 1 (c_1 - 1)d_1c_2 \cdots c_{s-1}d_{s-1} c_s &\text{if}\;\;\; c_1\neq 1\,,\\
                               (d_1+1) c_2 \cdots c_{s-1}d_{s-1} c_s &\text{otherwise}\,.
                              \end{cases}
 \]
We interpret this also to mean that when $v = c$ with $c>1$ then   $v' = 1 (c -1)$, and when $v=1$ then $v' = 1$.
\end{Def}
%-------------------------------------------------------------------------------------------

  The following is [\cite{CaltaKraaikampSchmidt}, Definition~4.10].
%-----------------------------------------------------------------------
\begin{Def}\label{d:thetaQ}   Set $\Theta_{-1}(c_1) = c_1+1$ and $\Theta_q(1) = 1q1 $ for $q \ge 1$.  For  $c>1$,  set  $\Theta_q(c) = c [ 1 (c-1)]^q 1c$ for any $q \ge 0$.  (To avoid double labeling and also to stay within our desired language,   $\Theta_0(1)$ is undefined;   note that $\Theta_1(1) = 111$, compare with $\Theta_0(c) = c 1 c$ for $c>1$.)

We now recursively define values of the operators  $\Theta_q$.  Suppose $v = \Theta_p(u) = u v''$ for some $p\ge 0$ and some suffix $v''$.   Then   define for any $q\ge 0$ 
\[ \Theta_q(v) = v (v')^q v'' \,.\]
\end{Def} 
%----------------

%---------------- 
\begin{Def}\label{d:descendents}   Let $\mathcal V$ denote the set of all words obtainable from $v=1$ by finite sequences of applications of the various $\Theta_q$. 
\end{Def}    
%---------------- 

There is a type of self-similarity of $\mathcal V$ which allows the explicit definition of the {\em derived words} operator $\mathscr D$ such that (again for  general $v$)  $\mathscr D\circ \Theta_q (v) = \Theta_q\circ\mathscr D(v)$.    In general $\mathscr D$ decreases the length of words while preserving various properties, and hence assists in induction proofs.   For example, we applied it to prove  that every $v \in \mathcal V$ is a palindrome.   For more on the tree, see (\cite{CaltaKraaikampSchmidt},  \S 4.2).

 \subsection{First expansive power maps}\label{ss:firstExpPow}  We recall the following directly from \cite{CaltaKraaikampSchmidtContinEntrop}.
 
%------------------------------------------------------------------------ 
\begin{Def}\label{d:firstExpansiveReturn}  
Let $T$ be a function on  an interval  $I$, with  an invariant probability measure $\nu$ which is equivalent to Lebesque measure. Suppose that $T$ is eventually expansive, thus there is some $r \in \mathbb N$ such that $T^r$ is expansive.    By the Well Ordering Principle of the integers, for each $x\in I$ there is  a least $\ell(x) \in \mathbb N$ such that $T^{\ell(x)}(x)$ is expansive in the sense that the derivative here is greater than 1 in absolute value.    We define the {\em first pointwise expansive power of $T$}  as the map  $U: I\to I$  sending each $x$  to $T^{\ell(x)}(x)$.
\end{Def}
%------------------------------------------------------------------------      
 
Of course, $\ell(x) \le r$ for all $x \in I$ and also $U(x) = T(x)$ for all $x \in E$, where $E$ is the maximal subset of $I$ on which $T$ itself is expansive.     For each $k \le r$, let $E_k \subset I$ be the set on which $\ell(x) = k$.    Of course,  the $E_k$ give a finite partition of $I$.   We have that $|(T^k)'(x)|>1$ for all $x \in E_k$;  
by  the Chain Rule, $|(T^k)'(x)| = \prod_{i=0}^{k-1}\, |T'(\, T^i(x)\,)|$.   The {\em minimality} of $\ell(x)$ hence implies both  $T(E_k)\subseteq \cup_{i=1}^{k-1} \, E_i$ for $k\ge 2$ and $T^{k-1}(E_k) \subset E_1$ for all $k$.

Note that in the setting where $I$ is an interval and $T$ is given piecewise by M\"obius transformations,  then also  $U$ is so given.

 \section{Results for the setting of $m=2$}\label{s:ResultsMisTwo}
Besides the aforementioned Theorem~\ref{t:mIsTwoSimpleEntropyBehavior}, we show the following results specific to the $m=2$ setting.  
 
 %------------------------------------------------------------------------------------------ 
\begin{Thm}\label{t:mIsTwoEntropyCont}  For each $n\ge 3$, the function assigning to $\alpha \in (0,1)$ the measure theoretic entropy of $T_{2,n,\alpha}$ is continuous and is invariant under  $\alpha \mapsto 1 -\alpha$.  
\end{Thm}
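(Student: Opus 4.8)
The plan is to prove the two assertions by rather different means, both grounded in the $m=2$ analog of the planar machinery of \S~\ref{ss:CKS}: a positive planar extension $\mathcal T_{2,n,\alpha}\colon\Omega_{2,n,\alpha}\to\Omega_{2,n,\alpha}$ whose first-coordinate action is $T_{2,n,\alpha}$ and whose normalized marginal on $\mathbb I_{2,n,\alpha}$ is the (Lebesgue-equivalent) invariant measure $\nu_\alpha$ of $T_{2,n,\alpha}$. Once $0<\mu(\Omega_{2,n,\alpha})<\infty$ on $(0,1)$ is in hand, Rohlin's formula \eqref{e:rohlinEnt} gives
\[
 h(T_{2,n,\alpha})\;=\;\frac{1}{\mu(\Omega_{2,n,\alpha})}\int_{\Omega_{2,n,\alpha}}\ln\bigl|\,T_{2,n,\alpha}'(x)\,\bigr|\,d\mu .
\]
For the invariance under $\alpha\mapsto 1-\alpha$ I will exploit that with $m=2$ one has $\mu_2=0$, so $C\cdot x=-1/x$, $A\cdot x=x+t$ with $t=t_{2,n}=\nu_n$, and $\mathbb I_{1-\alpha}=[-\alpha t,(1-\alpha)t)$ coincides with $-\mathbb I_\alpha$ off a two-point set. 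The involution $w\colon x\mapsto-x$ conjugates $A$ to $A^{-1}$ and fixes $C$ projectively, so $w(A^kC^l)w^{-1}=A^{-k}C^l\in G_{2,n}$; a direct check of the selection rules for $l$ and $k$ in \eqref{e:maps}, using $-\lfloor -u\rfloor=1+\lfloor u\rfloor$ for $u\notin\mathbb Z$, then shows $T_{2,n,1-\alpha}(x)=-\,T_{2,n,\alpha}(-x)$ off a countable set. Equivalently, $\iota(x,y)=(-x,-y)$ preserves $\mu$ (since $(-x)(-y)=xy$) and intertwines $\mathcal T_{2,n,\alpha}$ with $\mathcal T_{2,n,1-\alpha}$, hence $\iota(\Omega_{2,n,\alpha})=\Omega_{2,n,1-\alpha}$ up to $\mu$-null sets; in particular $\mu(\Omega_{2,n,\alpha})=\mu(\Omega_{2,n,1-\alpha})$ and the two Rohlin integrals agree, so the displayed formula yields $h(T_{2,n,\alpha})=h(T_{2,n,1-\alpha})$. (Alternatively, $w$ is a measure-theoretic isomorphism of the two interval systems pushing $\nu_\alpha$ to $\nu_{1-\alpha}$, and entropy is an isomorphism invariant.) The only care needed is with the half-open endpoint convention and the Lebesgue-null set where an argument of $\lfloor\cdot\rfloor$ is integral.

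For continuity on $(0,1)$ I will follow the template of \cite{KraaikampSchmidtSteiner, CarminatiTiozzo}, as implemented in \cite{CaltaKraaikampSchmidtContinEntrop} for $m=3$ and in \cite{CarminatiIsolaTiozzo} for $(2,3)$, now for the $m=2$ family. By the displayed formula it suffices to show that $\alpha\mapsto\mu(\Omega_{2,n,\alpha})$ and $\alpha\mapsto\int_{\Omega_{2,n,\alpha}}\ln|T_{2,n,\alpha}'(x)|\,d\mu$ are continuous on $(0,1)$, with $\mu(\Omega_{2,n,\alpha})$ finite and nonzero there (the latter from the construction of the positive planar extensions). On the interior of each matching interval $J_{k,v}$ — the $m=2$ analog of the synchronization intervals of \S~\ref{ss:ExpanSynTree} — the orbits of the endpoints of $\mathbb I_{2,n,\alpha}$ synchronize in a fixed combinatorial pattern, so $\partial\Omega_{2,n,\alpha}$ is a finite union of arcs of the form $y=N\cdot(\text{endpoint-orbit point})$ for finitely many fixed $N\in G_{2,n}$, each moving real-analytically with $\alpha$; integrating $d\mu$, respectively $\ln|T_{2,n,\alpha}'|\,d\mu$, over such a region then produces a real-analytic, hence continuous, function of $\alpha$ there, and across the common endpoint of two adjacent matching intervals the two combinatorial descriptions of $\Omega_{2,n,\alpha}$ glue continuously because the synchronization relation forces their limits to coincide. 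If one prefers, once the $m=2$ analog of Lemma~\ref{l:integralsMatch} and the $\alpha=1/2$ evaluation of \S~\ref{ss:symmRosen} (via Theorem~\ref{t:HitoshiShowsGetHalf}) are in place, the numerator is the $\alpha$-independent constant $\mathrm{vol}(T^1(G_{2,n}\backslash\mathbb H))=\pi^2(n-2)/n$, and only the denominator need be treated.

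The main obstacle is continuity across the bifurcation set $\mathcal E_n=(0,1)\setminus\bigcup_{k,v}J_{k,v}$, which is closed, nowhere dense and $\mu$-null but on which matching fails. As in the cited works, the resolution is a quantitative estimate: as $\alpha\to\alpha_0\in\mathcal E_n$, the subregions of $\Omega_{2,n,\alpha}$ whose combinatorics are unstable have $\mu$-mass — and contribute to the Rohlin integral — an amount tending to $0$ uniformly, proved by controlling how deep in the cylinder structure the instability can lie and bounding the resulting tails, exactly as in \cite{KraaikampSchmidtSteiner, CaltaKraaikampSchmidtContinEntrop}. Granting this, both $\mu(\Omega_{2,n,\alpha})$ and the Rohlin integral are continuous at $\alpha_0$, hence on all of $(0,1)$, and the displayed formula completes the proof. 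I expect this bifurcation-set estimate, rather than any of the formal reductions above, to be where the real work lies, as is typical for entropy functions of this kind.
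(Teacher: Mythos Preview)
Your treatment of the symmetry under $\alpha\mapsto 1-\alpha$ is essentially the paper's: the conjugation by $x\mapsto -x$ on the interval level and by $\mathcal S(x,y)=(-x,-y)$ on the planar level is exactly what is done in \S\ref{ss:Symmetry} and \S\ref{sss:planarMaps}.

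For continuity, your primary route differs from the paper's. You aim to show that both factors in $h(T_{2,n,\alpha})=\bigl(\int_{\Omega_{2,n,\alpha}}\ln|T'|\,d\mu\bigr)/\mu(\Omega_{2,n,\alpha})$ vary continuously, and correctly identify the bifurcation set $\mathcal E_n$ as the hard case, to be handled by tail estimates in the style of \cite{KraaikampSchmidtSteiner}. The paper avoids this entirely for the numerator: via a word-processing argument (Lemmas~\ref{t:wordPro} and \ref{l:firstReturnTo2ndQuadrant}) it exhibits, for any pair $\alpha,\alpha'\in(0,1)$, a common region $\Omega^+_{\alpha,d}\cap\Omega^+_{\alpha',d'}$ to which the first return maps of $\mathcal T_{2,n,\alpha}$ and $\mathcal T_{2,n,\alpha'}$ agree (both coinciding with a first return of $\mathcal T_{2,n,0}$). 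Abramov's formula \eqref{e:AbramForm} then gives immediately that $h(T_{2,n,\alpha})\,\mu(\Omega_{2,n,\alpha})$ is \emph{constant} on $(0,1)$ (Proposition~\ref{p:continuityOfEntropyOnIntervals}). Continuity of $h$ thus reduces purely to continuity of $\alpha\mapsto\mu(\Omega_{2,n,\alpha})$, which is established by adapting the $m=3$ arguments of \cite{CaltaKraaikampSchmidtContinEntrop} (\S\ref{ss:conMuOm}). Your parenthetical alternative, invoking the $m=2$ analog of Lemma~\ref{l:integralsMatch} so that the numerator is the constant $\mathrm{vol}(T^1(G_{2,n}\backslash\mathbb H))$, lands in the same place; but note that in the paper the constancy is obtained first and cheaply from Abramov on the common induced system, and Lemma~\ref{l:integralsMatch} is then derived from it, rather than the other way around. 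The upshot: your approach is workable but makes the bifurcation-set analysis do double duty, whereas the paper's Abramov trick dispatches the numerator in one stroke.
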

%------------------------------------------------------------------------------------------ 

For each  $n\ge 3$ and $\alpha \in [0,1]$ let  $\Omega_{2,n,\alpha}$ be as defined in Subsections \S~\ref{ss:PlanarExtMatch},  \ref{ss:conMuOm}  and \ref{ss:omZeroOne}, according as $\alpha$: lies in the interior of a matching interval; is any other value in $(0,1)$; is in $\{0,1\}$.   

%------------------------------------------------------------------------------------------ 
\begin{Thm}\label{t:mIsTwoSymm}  For each $n\ge 3$ and $\alpha \in [0,1]$,  the map $(x,y) \mapsto (-x, -y)$ sends $\Omega_{2,n,\alpha}$ to $\Omega_{2,n,1-\alpha}$.  
\end{Thm}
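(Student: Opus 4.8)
The plan is to exploit the fact that the entire construction of $T_{2,n,\alpha}$ and its planar extension is built symmetrically about the origin, so that the Lebesgue-measure-zero-preserving involution $S:(x,y)\mapsto(-x,-y)$ should conjugate the whole dynamical picture for parameter $\alpha$ to that for parameter $1-\alpha$. First I would record the one-dimensional fact underlying everything: the interval $\mathbb I_{2,n,\alpha}=[(\alpha-1)t,\alpha t)$ satisfies $-\mathbb I_{2,n,\alpha}=(-\alpha t,(1-\alpha)t]$, which up to the measure-zero endpoint convention is $\mathbb I_{2,n,1-\alpha}$. Next I would check the intertwining of the maps themselves: if $x\mapsto A^kC^l\cdot x$ on a cylinder for $T_{2,n,\alpha}$, then (since $m=2$ gives $C^2=\mathrm{id}$ projectively, and since $A$ and $B=C$ conjugate nicely by $\mathrm{diag}(1,-1)$ or by $R$) the map $-x\mapsto A^{k'}C^{l'}\cdot(-x)$ that $T_{2,n,1-\alpha}$ assigns has $(k',l')$ determined by the floor/minimality conditions applied to $-x$; the key computational lemma is that the digit selection rule $k=-\lfloor (C^l\cdot x)/t+1-\alpha\rfloor$ transforms under $x\mapsto -x,\ \alpha\mapsto 1-\alpha$ into the corresponding rule with a sign flip, i.e. $T_{2,n,1-\alpha}(-x)=-T_{2,n,\alpha}(x)$ off a countable set. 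This makes $S$ conjugate $\mathcal T_{2,n,\alpha}$ to $\mathcal T_{2,n,1-\alpha}$, because the $y$-action is $RM R^{-1}\cdot y$ and $R$ commutes with $S$ in the relevant sense (negation on both coordinates is realized by conjugation by $\mathrm{diag}(-1,1)$, which intertwines $M$ with its negation-conjugate consistently in $x$ and $y$).

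With the conjugacy of the 2-D maps in hand, the statement splits along the three cases in the definition of $\Omega_{2,n,\alpha}$. For $\alpha$ in the interior of a matching interval (the case treated in \S\ref{ss:PlanarExtMatch}), $\Omega_{2,n,\alpha}$ is constructed explicitly from the orbits of the endpoints $\ell_0(\alpha),r_0(\alpha)$ under $T_{2,n,\alpha}$; since $S$ sends $\ell_0(\alpha)\mapsto -\ell_0(\alpha)=r_0(1-\alpha)$ and $r_0(\alpha)\mapsto \ell_0(1-\alpha)$ (up to endpoint convention) and conjugates the map, it carries the orbit data, hence the cylinder/fibre description of $\Omega_{2,n,\alpha}$, to that of $\Omega_{2,n,1-\alpha}$. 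For a non-matching $\alpha\in(0,1)$ (\S\ref{ss:conMuOm}), $\Omega_{2,n,\alpha}$ is defined by a limiting/intersection procedure (an intersection of nested domains, or a Hausdorff limit of the matching-interval domains); continuity of this procedure together with the already-established symmetry of matching data gives the claim by passing $S$ through the limit. For $\alpha\in\{0,1\}$ (\S\ref{ss:omZeroOne}), where the domain has infinite mass, one checks directly from the explicit formula for $\Omega_{2,n,0}$ and $\Omega_{2,n,1}$ that $S$ swaps them.

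The main obstacle I anticipate is \emph{not} the geometry but the bookkeeping of the digit-selection map under $x\mapsto-x$: the rule for $l$ (``$C^l\cdot x\notin\mathbb I$, $l$ minimal'') and then for $k$ involves the half-open convention on $\mathbb I_\alpha$, and under negation a half-open interval becomes half-open at the \emph{other} end, so the boundary orbits can be relabeled inconsistently on a measure-zero set. I would handle this by working ``up to measure zero'' throughout (as the paper's Convention allows) and verifying the identity $T_{2,n,1-\alpha}(-x)=-T_{2,n,\alpha}(x)$ only for $x$ in the interior of each cylinder; the endpoint discrepancies are then a countable set and irrelevant to the $\mu$-a.e. description of $\Omega$. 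A secondary subtlety is making sure the involution $S$ is $\mu$-measure preserving for $d\mu=dx\,dy/(1+xy)^2$ — it is, since $(1+(-x)(-y))^2=(1+xy)^2$ and the Jacobian of $S$ is $1$ — so the symmetry of the domains is compatible with (and in fact gives an independent check of) the symmetry of the entropy asserted in Theorem~\ref{t:mIsTwoEntropyCont} via Abramov's and Rohlin's formulas.
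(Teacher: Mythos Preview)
Your proposal is correct and follows essentially the same route as the paper: the conjugacy $\mathcal S\circ\mathcal T_{2,n,\alpha}=\mathcal T_{2,n,1-\alpha}\circ\mathcal S$ is established in \S\ref{ss:Symmetry} via the one-dimensional identity $T_{2,n,1-\alpha}(-x)=-T_{2,n,\alpha}(x)$ and a direct matrix computation on the $y$-coordinate (using $R=C$ when $m=2$), and then the theorem is assembled case-by-case from the explicit constructions of $\Omega_{2,n,\alpha}$ in \S\ref{ss:PlanarExtMatch}, \S\ref{ss:conMuOm}, and \S\ref{ss:omZeroOne}, exactly as you outline. Your anticipated endpoint/half-open bookkeeping issue is handled in the paper just as you suggest, by the standing ``up to measure zero'' convention.
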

%------------------------------------------------------------------------------------------ 

 %------------------------------------------------------------------------------------------ 
\begin{Thm}\label{t:mIsTwoGotVol}   For all  $n\ge 3$ and $\alpha \in [0,1]$ we have
\[ \int_{\Omega_{2,n,\alpha}}\,  \log \vert T'_{2,n,\alpha}(x) \vert\, d \mu  = \emph{vol}(T^1(G_{2,n}\backslash \mathbb H)).\] 

For  $\alpha \in (0,1)$,  the measure theoretic entropy of $T_{2,n,\alpha}$ satisfies 
\[h (T_{2,n,\alpha})  \mu(\Omega_{2,n,\alpha}) = \emph{vol}(T^1(G_{2,n}\backslash \mathbb H)).\]
\end{Thm}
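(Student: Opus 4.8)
The plan is to treat the two displayed equalities separately, with the first (the Rohlin-integral identity over $\Omega_{2,n,\alpha}$) serving as the backbone and the second (the entropy identity) following from it by Abramov's formula. I would begin with the special case $\alpha = 1/2$. Here $T_{2,n,1/2}$ is the symmetric Rosen-type map, and Theorem~\ref{t:HitoshiShowsGetHalf} (Nakada 2010) gives $h(f_n)\,\mu(\Omega_n) = \tfrac12\,\mathrm{vol}(T^1(G_{2,n}\backslash\mathbb H))$. As the Impetus/Approach subsections indicate, passing from the two-sided Rosen map $f_n$ to the one-sided $T_{2,n,1/2}$ doubles both the entropy-weighted factor and the planar mass in a compatible way — concretely, one checks that $\Omega_{2,n,1/2}$ is (up to measure zero and the $\mathcal Z$-conjugacy) two copies of $\Omega_n$ glued along the symmetry $(x,y)\mapsto(-x,-y)$ of Theorem~\ref{t:mIsTwoSymm}, so that $h(T_{2,n,1/2})\,\mu(\Omega_{2,n,1/2}) = \mathrm{vol}(T^1(G_{2,n}\backslash\mathbb H))$. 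This is the base point; I expect the bookkeeping here (the precise identification of the symmetric Rosen planar domain with the $\alpha=1/2$ domain of §\ref{ss:PlanarExtMatch}, and tracking the factor of two through the normalization of $\mu$) to be routine but the one place where the argument must be written out carefully in §\ref{ss:symmRosen}.

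Next I would establish that the Rohlin integral $\int_{\Omega_{2,n,\alpha}} \log|T'_{2,n,\alpha}(x)|\,d\mu$ is \emph{independent of $\alpha$} on $(0,1)$, hence equal to its value at $\alpha = 1/2$. For $\alpha$ in the interior of a matching interval this is the matching-interval mechanism imported from the $m=3$ setting: the $T_\alpha$-orbits of the two endpoints $\ell_0(\alpha)$ and $r_0(\alpha)$ synchronize (the analog of \eqref{e:synchronizationExplicit}), so that as $\alpha$ varies within the interval the planar domain $\Omega_{2,n,\alpha}$ changes only by cutting off one rectangular sliver and re-attaching a $\mathcal T$-image of it elsewhere; since $\mathcal T$ preserves $\mu$ and carries the integrand $\log|T'|$ additively along orbits (the cocycle property of $\tau$ in Definition~\ref{def:Tau}), the integral is locally constant. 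Across the (measure-zero) complement of the union of matching intervals one argues by continuity — this is exactly the content one needs for Theorem~\ref{t:mIsTwoEntropyCont}, and I would prove the Rohlin-integral continuity and the matching-interval invariance in tandem (this is the role of Lemma~\ref{l:integralsMatch}). The upshot is the first displayed identity for all $\alpha\in(0,1)$; the endpoints $\alpha\in\{0,1\}$ then follow from the explicit description of $\Omega_{2,n,0}$ and $\Omega_{2,n,1}$ in §\ref{ss:omZeroOne} together with a limiting/monotone-convergence argument, even though $\mu(\Omega_{2,n,0}) = \infty$.

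Finally, the entropy identity. For $\alpha\in(0,1)$ the invariant probability measure $\nu_\alpha$ for $T_{2,n,\alpha}$ is the normalized marginal of $\mu$ on $\Omega_{2,n,\alpha}$, so Rohlin's formula \eqref{e:rohlinEnt} gives $h(T_{2,n,\alpha}) = \tfrac{1}{\mu(\Omega_{2,n,\alpha})}\int_{\Omega_{2,n,\alpha}}\log|T'_{2,n,\alpha}(x)|\,d\mu$; multiplying through by $\mu(\Omega_{2,n,\alpha})$ and invoking the first identity yields $h(T_{2,n,\alpha})\,\mu(\Omega_{2,n,\alpha}) = \mathrm{vol}(T^1(G_{2,n}\backslash\mathbb H))$. (One needs here that $\Omega_{2,n,\alpha}$ is genuinely a domain of bijectivity of $\mathcal T$ with $0<\mu(\Omega_{2,n,\alpha})<\infty$ and that $T_{2,n,\alpha}$ is ergodic with $T'$ defined $\nu_\alpha$-a.e.; these are established in the construction of the planar extensions in §§\ref{ss:PlanarExtMatch}–\ref{ss:conMuOm}.) The main obstacle is thus not the entropy step itself — that is a one-line application of Rohlin — but the two geometric inputs feeding it: the clean identification at $\alpha=1/2$ with Nakada's Rosen result, and the matching-plus-continuity proof that the Rohlin integral does not depend on $\alpha$. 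Of these, I expect the matching-interval invariance to be where the real work lies, since it requires transporting the combinatorial apparatus of §\ref{ss:CKS} (the words $v$, the intervals $I_{k,v}$, $J_{k,v}$, and the synchronization identity) from the $m=3$ to the $m=2$ setting and verifying that the planar-domain surgery is measure- and integrand-preserving.
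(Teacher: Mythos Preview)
Your overall architecture matches the paper: anchor at $\alpha=1/2$ via Nakada's theorem, establish that the Rohlin integral is independent of $\alpha$, then read off the entropy identity from Rohlin's formula. Where you diverge is in the mechanism for constancy, and there your description has a real error. You say the planar domain ``changes only by cutting off one rectangular sliver and re-attaching a $\mathcal T$-image of it''; but the proof of Lemma~\ref{l:massIncreasesWhere} shows that passing from $\Omega_{2,n,\alpha}$ to $\Omega_{2,n,\alpha'}$ within $J_{k,v}$ deletes $\underline{S}(k,v)+1$ slivers and adds $\overline{S}(v)+1$, and these counts generally differ --- which is exactly why $\mu(\Omega_{2,n,\alpha})$ is \emph{not} constant along most matching intervals. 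Invariance of the Rohlin integral under the correct surgery is not a one-line cocycle fact; it would require telescoping the cocycle over each orbit segment and invoking the synchronization identity $L_{k,v}=(C^{-1}AC)R_{k,v}$ of Lemma~\ref{l:shortRightId}(iv), none of which you indicate. Your bridging step is also circular as stated: in the paper Theorem~\ref{t:mIsTwoEntropyCont} is \emph{deduced from} the constancy of $h\cdot\mu$, not used to prove it.

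The paper's route avoids matching-interval surgery for this purpose altogether. A word-processing lemma (Lemma~\ref{t:wordPro}) shows that for negative $x$ the $T_\alpha$-return to the negative half-line is always a compositional power of $T_0$; this yields Lemma~\ref{l:firstReturnTo2ndQuadrant}, that the \emph{first return} maps of $\mathcal T_0$ and $\mathcal T_\alpha$ to a small common region $\Omega^+_{\alpha,d}$ coincide. Abramov's formula applied to this common induced system then gives $h(T_\alpha)\mu(\Omega_\alpha)=h(T_{\alpha'})\mu(\Omega_{\alpha'})$ for all $\alpha,\alpha'\in(0,1)$ in one stroke (Proposition~\ref{p:continuityOfEntropyOnIntervals}), with no matching combinatorics and no continuity step. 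For $\alpha\in\{0,1\}$, where $\mu(\Omega)=\infty$, a limiting or monotone-convergence argument does not suffice; the paper instead builds a tower over $\Omega^+_{\alpha,d}$ and compares with the accelerated finite-mass system at $\alpha=0$ constructed in \S\ref{sss:PlanarForAcc} (see Lemma~\ref{l:integralsMatch}).
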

%------------------------------------------------------------------------------------------ 

  In this section, we first consider in \S~\ref{ss:Symmetry} the basic symmetry $\alpha \mapsto 1-\alpha$ between interval  maps and also between their associated planar maps.  This symmetry naturally points to $\alpha = 1/2$ as a special case; in \S~\ref{ss:symmRosen} we prove the  second statement of Theorem~\ref{t:mIsTwoGotVol} in the case of $\alpha = 1/2$, a result used without proof in  \cite{ArnouxSchmidtCommCF} and which is indeed virtually a direct result of Nakada's Theorem \ref{t:HitoshiShowsGetHalf}. For each $n\ge 3$, the exact value of the $\mu$-mass of $\Omega_{2,n,1/2}$ is twice that of the planar extension region given by \cite{BKS}.  Due to the result of \S~\ref{ss:symmRosen}, Birkhoff approximations for the entropy of certain $T_{2,n, 1/2}$ made us suspicious of their  $\mu$-mass value in the odd $n$ case.  In \S~\ref{ss:Correction} we correct their formula.  
  
   The remainder of the section, other than \S~\ref{ss:incrDecr},  shows that the techniques and results of \cite{CaltaKraaikampSchmidt, CaltaKraaikampSchmidtContinEntrop, CaltaKraaikampSchmidtPfsErgodicity}  hold in the $n=2$ setting.   In \S~\ref{ss:incrDecr} we give the main results underpinning Theorem~\ref{t:mIsTwoSimpleEntropyBehavior}.  That is, we determine the behavior of  $\alpha \mapsto \Omega_{2,n, \alpha}$ along matching intervals, determine the middle matching interval along which the function is constant, as well as the maximal value.  
   
   In \S~\ref{ss:conMuOm} we complete the proof of all but the first statement of Theorem~\ref{t:mIsTwoSimpleEntropyBehavior} by showing the continuity of  $\alpha \mapsto \mu(\Omega_{2,n \alpha})$ on all of $(0,1)$. 
   In \S~\ref{ss:omZeroOne}, we determine $\Omega_{2,n,0}$ and $\Omega_{2,n,1}$ for all $n\ge 3$. Combined with the earlier constructions of the various $\Omega_{2,n,\alpha}$, this completes the proof of Theorem~\ref{t:mIsTwoSymm} .    In an initial part of \S~\ref{ss:conEntConRoh}, we use the explicit two dimensional regions $\Omega_{2,n,\alpha}$ and `word processing' to allow a use of Abramov's Formula, showing continuity of $\alpha \mapsto h(T_{2,n, \alpha})$ on all of $(0,1)$; this completes both the proof of   Theorem~\ref{t:mIsTwoSimpleEntropyBehavior}, and due to the symmetry,  also that of Theorem~\ref{t:mIsTwoEntropyCont}.   In the later part of \S~\ref{ss:conEntConRoh}, we consider Rohlin integrals to complete the proof of Theorem~\ref{t:mIsTwoGotVol}.

\subsection{Symmetry under $\alpha \mapsto 1- \alpha$}\label{ss:Symmetry}
The basic observation that for  $x \notin \mathbb Z$ one has $\lfloor -x \rfloor = -1 -\lfloor -x \rfloor$ leads to the following.

\subsubsection{Interval maps}
If $T_{2,n,\alpha}(x) \neq \ell_0(\alpha)$, one finds $T_{2,n,1-\alpha}(-x) = - T_{2,n,\alpha}(x)$.  Furthermore,   if $T_{2,n,\alpha}(x) = \ell_0(\alpha) = (\alpha-1)t$ then $T_{2,n,1-\alpha}(-x) =  \ell_0(1-\alpha)  = - \alpha t$.

For more detail, when $T_{2,n,\alpha}(x) \neq \ell_0(\alpha)$, one has $\frac{-1}{ t x} + 1 - \alpha  \notin \mathbb Z$ and hence
\[
\begin{aligned} 
- T_{2,n, \alpha}(x) &= \frac{1}{x} + \lfloor  \frac{-1}{ t x} + 1 - \alpha \rfloor\, t &=:  \frac{1}{x} + k t\\
                               &=  \frac{1}{x} + ( 1 + \lfloor  \frac{-1}{ t x}   - \alpha \rfloor)\, t\\
                               &= \frac{1}{x} + ( 1 + - 1 - \lfloor  \frac{1}{ t x}   + \alpha \rfloor)\, t\\
                               &= \frac{1}{x}   - \lfloor  \frac{1}{ t x}   + \alpha \rfloor \, t\\
                               &= \frac{1}{x}   - \lfloor  \frac{1}{ t x}   + 1 - (1- \alpha) \rfloor \, t\ &=:   \frac{1}{x} - k' t\\
                               &= T_{2,n, 1-\alpha}(-x).
\end{aligned} 
\]
Thus, from $k' = -k$ in the above, one finds that for any $x$ in the interior of $\mathbb I_{2,n,\alpha}$ with $T_{2,n,\alpha}(x) = A^kC\cdot x \neq \ell_0(\alpha)$, one has $-x  \in \mathbb I_{2,n,1-\alpha}$ and $T_{2,n,1-\alpha}(-x) =  -T_{2,n,\alpha}(x) = C (A^k C )^{-1}C\cdot (-x)$.

\subsubsection{Planar maps}\label{sss:planarMaps}
From the previous, for $x$ as above and any $y$-value,  since when $m=2$ we have $R = C$,
\[
\begin{aligned} 
\mathcal T_{2,n,1-\alpha}(-x, -y) &= (- A^k C\cdot x,   R C (A^k C)^{-1} C R^{-1}\cdot -y)\\
                                                    &= (- A^k C\cdot x,   (A^k C)^{-1}   \cdot -y)\\
                                                    & (- A^k C\cdot x,  \frac{1}{k t + y}) \\
                                                    &= - ( A^k C\cdot x,  R    A^k C  R^{-1}\cdot y)   \\
                                                    &= - \mathcal T_{2,n,\alpha}(x, y).
\end{aligned} 
\]

Thus, since $d \mu = (1 + xy)^{-2} dx\, dy$ is preserved by 
\begin{equation}\label{eq:symS}
\mathcal S: (x,y) \mapsto (-x,-y),
\end{equation}
the bijectivity  up to $\mu$-null sets of  $\mathcal T_{2,n,\alpha}$ on a domain, such as is shown for $\Omega_{2,n,\alpha}$ below,  is equivalent to that of $\mathcal T_{2,n,1-\alpha}$ on $\mathcal S(\Omega_{2,n,\alpha})$.  Thus, the   symmetry announced in Theorem~\ref{t:mIsTwoSymm} will hold.

\subsection{Entropy of symmetric Rosen fractions}\label{ss:symmRosen}   Given the above symmetry, the value $\alpha=1/2$ has heightened interest.  We fill in missing details from \cite{ArnouxSchmidtCommCF}.     There,  a family of interval maps called the ``symmetric Rosen maps" are introduced and studied.   In fact, in our notation the symmetric Rosen map of index $n$ is $g = g_n = T_{2,n, 1/2}$.    For each $n \ge 3$, their definition begins with setting $\lambda_n = 2 \cos \pi/n$ and   $\mathbb I_n =   [-\lambda_n/2, \lambda_n/2)$.    Letting, as in \cite{NakadaFord}, $\lfloor  x \rfloor_n = a  \lambda_n $ such that $x -a \lambda_n \in  \mathbb I_n$, Rosen's \cite{Rosen} continued fraction  map on $\mathbb I_n$ is given by $f(x) = f_n(x)= |1/x| - \lfloor |1/x| \rfloor_n$ (with $x = 0$  fixed).  Similarly, \cite{ArnouxSchmidtCommCF}    defines   the  symmetric Rosen map $g(x)= g_n(x) = -1/x - \lfloor-1/x\rfloor_n$.   In  \cite{ArnouxSchmidtCommCF} this function is denoted by $h(x)$, and the indexing variable is $q$ instead of $n$.  Since $\lambda_n = t_{2,n}$, one easily verifies that $g_n = T_{2,n, 1/2}$.  

Of course,   $f(x) = g(x)$  if $x\le 0$.    On the other hand, for $x>0$,   we have $f(x) = g(-x)$.     As noted in \cite{ArnouxSchmidtCommCF}   $g(-x) = - g(x)$ unless $g(x) = -\lambda_n/2$.    Thus, for almost all $x>0$,   $g'(x) = (- g(-x))' = g'(-x) = f'(-x)$.    Hence,  $|g'(x)| = |f'(x)|$ on $\mathbb I_n$, up to a null set.

 A planar natural extension $\Omega_n$ is  given for  each $f_n$ in \cite{BKS}; for each,  $\forall (x,y) \in \Omega_n, \,  y\ge 0$.   Let $\mathcal G = \mathcal G_n$ be the planar map associated to $g$ in our usual manner.  We now justify the implicit claim of  \cite{ArnouxSchmidtCommCF} that   $\mathcal G$ is bijective up to $\mu$-null sets on the union   $\Omega_n \cup \mathcal S(\Omega_n)$.  Let $\mathcal F$ denote the two dimensional map related to $f = f_n$ in our usual manner.  We have $\mathcal G(x,y) = \mathcal F(x,y)$ for $x\le 0$ and $(x,y) \in \Omega_n$ and $\mathcal F(x,y) = \mathcal G(-x,-y)$ for $x>0$ and $(x,y) \in \Omega_n$. Therefore, up to null sets, $\mathcal G$ sends $\{x\le 0\} \cap \Omega_n$ bijectively to $\Omega_n$.   The symmetry  $\mathcal G\circ \mathcal S( x, y) = -\mathcal G(x,y)$ now shows that $\mathcal G$ is bijective up to null sets on $\Omega_n \cup \mathcal S(\Omega_n)$.    Note that in our notation, $\mathcal G = \mathcal T_{2,n,1/2}$  and one can verify that  $\Omega_n \cup \mathcal S(\Omega_n) = \Omega_{2,n,1/2}$
 
Since the measure $\mu$ is invariant under the symmetry $\mathcal S$, we have   $\mu(\Omega_n \cup \mathcal S(\Omega_n)) = 2 \mu(\Omega_n)$.     Applying the Rohlin formula \eqref{e:rohlinEnt}, the entropy of the symmetric Rosen map $g_n(x)$ satisfies

 \[\begin{aligned} 
h(g_n) \, \mu(\Omega_n \cup \mathcal S(\Omega_n))&=   \int_{\Omega_n \cup \mathcal S(\Omega_n)}\, \log |g'(x)|\, d\mu\\
 &= 2 \int_{\Omega_n}\, \log |g'(x)|\, d\mu\\
 \\
  &=   2 \int_{\Omega_n}\, \log |f'(x)|\, d\mu\,.\\
 \end{aligned}\]
 
Thus,  Nakada's result,  Theorem~\ref{t:HitoshiShowsGetHalf}, then gives  
\begin{equation}\label{e:symmRosenEntr}
 h(T_{2,n,1/2}) \, \mu( \Omega_{2,n,1/2}) = \text{vol}\, T^1(G_{2,n}\backslash \mathbb H).
 \end{equation}

 \subsection{Measure of $\Omega_n$ for Rosen fractions: Correcting an (typographical?) error in \cite{BKS}} \label{ss:Correction}   

  There is little doubt but that the author of this paper is responsible for an error in \cite{BKS}.   In short,  [\cite{BKS}, Lemma~3.4] is wrong as stated.   The error looks to be caused by what seems to be a typographical error in the formula for the $\mu$-mass of the planar extension there at hand.   We correct this latter in the following. 
 
 \bigskip
  
  The $\mu$-mass of the planar extension for the Rosen interval map of odd index $n\ge 3$ is given by 
 \begin{equation}\label{e:correctMuMassOddRosen}
  \mu(\Omega_n) = \ln \bigg(\dfrac{ 1 +R_n}{2 \sin \frac{\pi}{2 n}} \bigg), 
 \end{equation}
 where $R_n$ is the positive solution of $x^2 + (2 - 2 \cos \pi/n\,) x -1 = 0$.      Note that this expression for $\mu(\Omega_n)$ also holds for $n=3$, a case not considered in \cite{BKS}, where the Rosen continued fraction is the standard `nearest integer' continued fraction.
 
 \bigskip
The denominator in the corrected expression is the multiplicative inverse of the \cite{BKS} quantity  $B_{h+1} := \sin \frac{(h+1) \pi}{n}/\sin \frac{\pi}{n}$ with $n = 2 h + 3$.   The flaw in \cite{BKS} is either simply a dropping of this factor, or is based upon an error in the proof of [\cite{BKS}, Lemma~3.4] where a certain denominator is written as being $B_{j+2}$ but should be $B_{j+1}$.   With that corrected, a telescoping product has cancellation that leaves $B_{h+1}$ in place of simply $1$.

The formula can be compared with the (correct!)  formula when $n$ is even for $\mu(\Omega_n)$ given by    \cite{BKS}: $\mu(\Omega_{\text{even}\, n}) =   \ln [\frac{ 1 + \cos \pi/n}{  \sin  \pi/n}]$, see Figure~\ref{f:muMassRosenOm}.  
 \bigskip
 
%--------------------------------------------
\begin{figure} \centering
\scalebox{0.4}{
\includegraphics{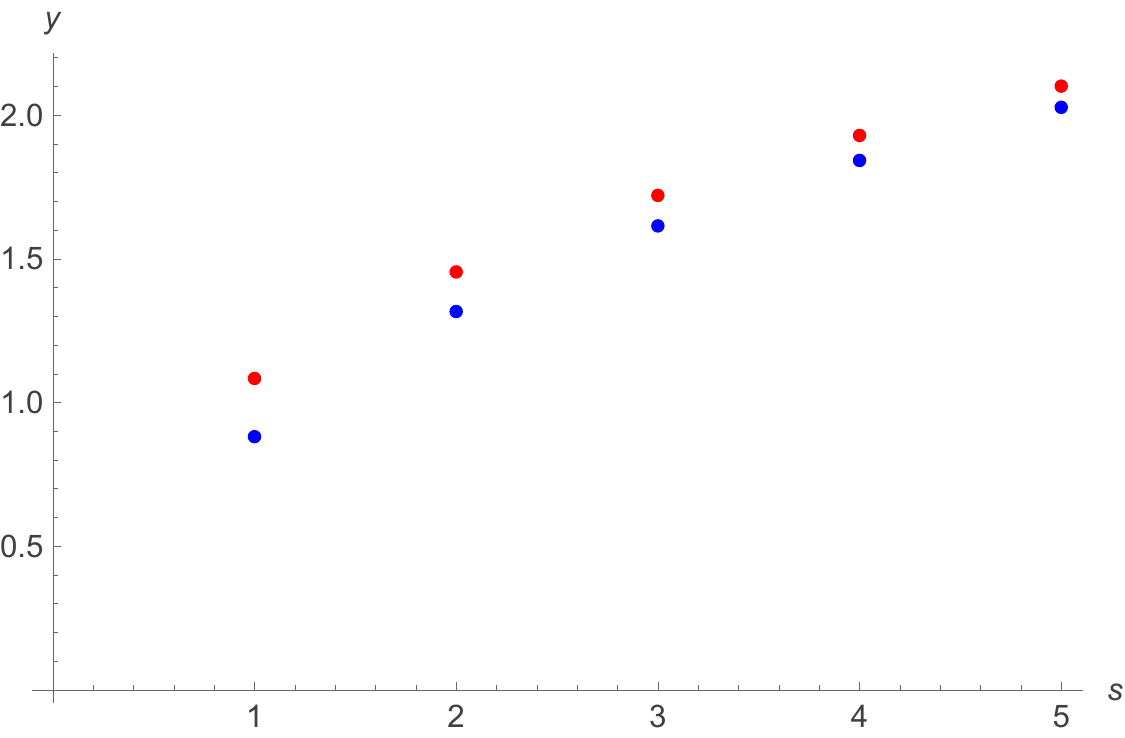}
}
\caption{The $\mu$-mass of the Rosen planar extension $\Omega_{2s+3}$ of \cite{BKS} (in red) is comparable to that of  $\Omega_{2s+2}$ (in blue), with both going to infinity as $s\to \infty$.  See \S~\ref{ss:Correction}.}
\label{f:muMassRosenOm}
\end{figure}
%--------------------------------------------

\subsection{Matching relation}\label{ss:matchingRel} Given $k$ and $v \in \mathcal V$, we have $R_{k,v}$ as in \eqref{e:rKv}.   We now follow \cite{CaltaKraaikampSchmidt} to define $L_{k,v}$.  With $m=2$, the word $w$ above becomes $w= (-1)^{n-3}, -2$.  Correspondingly we let  
\begin{equation}\label{eq:defW} W= W_{2,n} = A^{-2} C(A^{-1}C)^{n-3}
\end{equation}
and consider the following, each a word equal to $W^{k-1} (A^{-1}C)^{-1}$,  
\[\tilde{\mathcal C}_k = \begin{cases} W^{k-2}A^{-2}C(A^{-1}C)^{n-4}& \text{if}\; n>3, k>1\\
                                                            W^{k-2}A^{-1}& \text{if}\; n=3, k>1\\
                                                             (A^{-1}C)^{-1}&\text{if}\; n>3, k=1.
                                      \end{cases}
\]
For ease, let
\[ \tilde{\mathcal C} = \tilde{\mathcal C}_k  \; \text{and}\; \tilde{\mathcal D} = \tilde{\mathcal C}_{k+1}\]
and define 
\begin{equation}\label{eq:defL}
L_{k,v}  = (A^{-1} C)^{n-3} \; \tilde{\mathcal C}^{c_s}\tilde{\mathcal D}^{d_{s-1}} \cdots  \tilde{\mathcal D}^{d_1}  \tilde{\mathcal C}^{c_1-1}  W^{k-1}  A^{-1},                                     
\end{equation}
where when $k=1$ we simplify adjacent occurrences of  powers of $A^{-1} C$, which as we will see, in every case leads to only nonnegative powers appearing. 

\bigskip
%-------------------------------------------------------------------------------------------
\begin{Eg}\label{e:mIs3AndkIs2}   Consider $n = 3$ with $(k,v) = (2, p)$.  Thus,  $R_{k,v} = (A^2 C)^p$ and $L_{2,p} = (A^{-1} C)^{n-3} \; \tilde{\mathcal C}^{p-1}  W^{k-1}  A^{-1}=     (A^{-1})^{p-1} W   A^{-1} =  (A^{-1})^p \; A^{-2}C     A^{-1} =  A^{-(p+2)}C     A^{-1}$.      Similarly, still with  $n =  3$,  for $(k,v) = (2, 212)$  we find  
\[
\begin{aligned} 
L_{2,212} &= \tilde{\mathcal C}^2\;\tilde{\mathcal D}\;\tilde{\mathcal C} \; W A^{-1}\\
                 &=(A^{-1})^2  \;  A^{-2}CA^{-1} \; A^{-1}\; A^{-2}CA^{-1} \\
                 &=   A^{-4}C\;A^{-4}C\; A^{-1}  
\end{aligned}
\]

Let's now try $v = 121$.  
\[
\begin{aligned} 
L_{2,121} &= \tilde{\mathcal C}\;\tilde{\mathcal D}^2 \; W A^{-1}\\
                 &=A^{-1}  \;  (A^{-2}CA^{-1})\, (A^{-2}CA^{-1})  \;  A^{-2}CA^{-1} \\
                 &=   (A^{-3}C)^3\;A^{-1}.
\end{aligned}
\]

Let's now try the longer $v = 12121$.  
\[
\begin{aligned} 
L_{2,12121} &= \tilde{\mathcal C}\;\tilde{\mathcal D}^2 \; \tilde{\mathcal C}\;\tilde{\mathcal D}^2 \;W A^{-1}\\
                 &=A^{-1}  \;  (A^{-2}CA^{-1})\, (A^{-2}CA^{-1})  \;  A^{-1}  \;  (A^{-2}CA^{-1})\, (A^{-2}CA^{-1})  A^{-2}CA^{-1} \\
                 &=   (A^{-3}C)^2\;A^{-4}C\;(A^{-3}C)^2\;A^{-1}.
\end{aligned}
\]
\end{Eg}
%------------------------------------------------------------------------------------------- 

\bigskip
%-------------------------------------------------------------------------------------------
\begin{Eg}\label{e:seeingInverseCausesNoProblem}   Consider $n =  7$ with $(k,v) = (1, 2)$.  We have $R_{k,v} = (A C)^2$ and $L_{1,2} = (A^{-1} C)^{n-3} \; \tilde{\mathcal C}^{c_1-1}  W^{k-1}  A^{-1}= (A^{-1} C)^4 \;  (A^{-1} C)^{-1}     A^{-1} = (A^{-1} C)^{3}     A^{-1}$.     Similarly, still with  $n =  7$,  for $(k,v) = (1, 212)$  we find  
\[
\begin{aligned} 
L_{1,212} &= (A^{-1} C)^{n-3} \;  \tilde{\mathcal C}^2\tilde{\mathcal D}\tilde{\mathcal C}   A^{-1}\\
                 &= (A^{-1} C)^4 \;  (A^{-1} C)^{-2}  \; A^{-2}C(A^{-1}C)^{n-4}\;  (A^{-1} C)^{-1}\; A^{-1} \\
                 &=  (A^{-1} C)^2 \;  A^{-2}C(A^{-1}C)^{2}\; \; A^{-1}  
\end{aligned}
\]
Thus, with $n=7$ we have $\underline{S}(1,212) = \overline{S}(212)$. 
\end{Eg}
%------------------------------------------------------------------------------------------- 

Compare the following with [\cite{CaltaKraaikampSchmidt}, Lemma~5.1].    Fix $n$ and take $R_{k,v}$ as in \eqref{e:rKv}.  
%-------------------------------------------------------------------------------------------
\begin{Lem}\label{l:shortRightId}   The following identities in $G_{2,n}$ hold. 
\begin{enumerate} 
\item[(i)]\quad  $A^k C A =  C A  W^k$  for each $k\in \mathbb Z$;

\medskip
\item[(ii)]\quad   $(C^{-1}AC) (A^kC)^a (C^{-1}AC) ^{-1} = (A^{-1}C)^{n-3}\, \tilde{\mathcal C}_{k}^{\, \mathstrut a} \,(A^{-1}C)^{-(n-3)}$ for all $a, k \in \mathbb Z$;

\medskip
\item[(iii)]\quad  $\tilde{\mathcal C}_k(A^{-1}C)^{-(n-3)} (C^{-1}AC)  =W^{k-1} A^{-1}$;

\medskip
\item[(iv)]\quad   $(C^{-1}AC) R_{k,v}  =  L_{k,v}\,$.
\end{enumerate} 
\end{Lem}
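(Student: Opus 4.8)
The plan is to establish the four identities in order, since each one feeds into the next; item (iv) is just the assembly of (i)--(iii). First I would prove (i) by a direct matrix computation: since $W = A^{-2}C(A^{-1}C)^{n-3}$ and $C = AB$ with $B$ of order $n$ in $\mathrm{PSL}_2(\mathbb R)$, one checks that $CA = A^{-1}C\cdot(\text{something})$ — more precisely, I expect that $W = (CA)^{-1}A\,(CA)$ (equivalently $A^k CA = CA\,W^k$ follows by induction once the $k=1$ case $ACA = CAW$ is verified). The base case is a $2\times 2$ identity in $G_{2,n}$ using $\mu=0$ (so $C = \begin{pmatrix}0&1\\-1&0\end{pmatrix}$ when $m=2$), $t = t_{2,n} = \nu_n$, and the defining relations; I would simply multiply the matrices out. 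Induction on $k\ge 0$ and separately on $k\le 0$ (using (i) for $k=0$, namely $CA = CAW^0$, trivially) extends it to all $k\in\mathbb Z$.

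Next, for (ii), I would recognize $\tilde{\mathcal C}_k$ as designed precisely so that conjugation of $A^kC$ by $C^{-1}AC$ produces $(A^{-1}C)^{n-3}\tilde{\mathcal C}_k(A^{-1}C)^{-(n-3)}$. Since conjugation is multiplicative in the exponent $a$, it suffices to treat $a=1$: show $(C^{-1}AC)(A^kC)(C^{-1}AC)^{-1} = (A^{-1}C)^{n-3}\tilde{\mathcal C}_k(A^{-1}C)^{-(n-3)}$, then raise both sides to the $a$-th power (for negative $a$ this still works since both sides are then literal inverses). For the $a=1$ case I would unwind the definition $\tilde{\mathcal C}_k = W^{k-1}(A^{-1}C)^{-1}$ (this is the uniform description given just before the case split) and use (i): writing $C^{-1}AC \cdot A^k C = C^{-1}(ACA^k)\cdot C$ and applying (i) in the form $ACA^k = \dots$ should telescope. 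The case split in the definition of $\tilde{\mathcal C}_k$ ($n>3$ vs.\ $n=3$, $k>1$ vs.\ $k=1$) is only about rewriting $W^{k-1}(A^{-1}C)^{-1}$ without negative powers of $A^{-1}C$, so once the uniform identity $\tilde{\mathcal C}_k = W^{k-1}(A^{-1}C)^{-1}$ is used, the cases collapse. Item (iii) is then almost immediate: $\tilde{\mathcal C}_k(A^{-1}C)^{-(n-3)}(C^{-1}AC) = W^{k-1}(A^{-1}C)^{-1}(A^{-1}C)^{-(n-3)}(C^{-1}AC) = W^{k-1}(A^{-1}C)^{-(n-2)}(C^{-1}AC)$, and one checks $(A^{-1}C)^{-(n-2)}(C^{-1}AC) = A^{-1}$ as a $2\times 2$ relation in $G_{2,n}$ (this encodes the order-$n$ relation on $B$, since $A^{-1}C = A^{-1}\cdot AB = B$, so $(A^{-1}C)^{n} = \mathrm{I}$ projectively and one is just rearranging).

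Finally, (iv): with $R_{k,v} = (A^kC)^{c_s}(A^{k+1}C)^{d_{s-1}}\cdots(A^{k+1}C)^{d_1}(A^kC)^{c_1}$, I would insert $(C^{-1}AC)^{-1}(C^{-1}AC) = \mathrm{I}$ between every adjacent block, so that $(C^{-1}AC)R_{k,v}$ becomes a product of conjugates $\bigl[(C^{-1}AC)(A^kC)^{c_s}(C^{-1}AC)^{-1}\bigr]\cdots$ with a trailing factor $(C^{-1}AC)(A^kC)^{c_1}$ — note one fewer conjugation than blocks, mirroring how $L_{k,v}$ in \eqref{eq:defL} has a leading $(A^{-1}C)^{n-3}$ and the $c_1-1$ power on the rightmost $\tilde{\mathcal C}$. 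Apply (ii) to each bracket (with $k$ or $k+1$ as appropriate, giving the $\tilde{\mathcal C}$ or $\tilde{\mathcal D}$ factors), and the interior $(A^{-1}C)^{\pm(n-3)}$ terms telescope; the rightmost block $(C^{-1}AC)(A^kC)^{c_1} = (C^{-1}AC)(A^kC)^{c_1-1}(A^kC)$ gives $(A^{-1}C)^{n-3}\tilde{\mathcal C}_k^{c_1-1}(A^{-1}C)^{-(n-3)}\cdot(C^{-1}AC)^{-1}\cdot(C^{-1}AC)(A^kC)$, wait — more cleanly, peel off the last $(A^kC)$ factor and apply (iii) with the accumulated $\tilde{\mathcal C}_k(A^{-1}C)^{-(n-3)}(C^{-1}AC) = W^{k-1}A^{-1}$ to produce exactly the tail $W^{k-1}A^{-1}$ of \eqref{eq:defL}. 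Matching against \eqref{eq:defL} term by term then finishes. The main obstacle I anticipate is bookkeeping the telescoping in (iv) — getting the count of $(A^{-1}C)^{n-3}$ conjugating factors, the off-by-one on the $c_1$ exponent, and the $k=1$ simplification of negative powers all consistent with \eqref{eq:defL}; verifying the base identities in (i) and (iii) is routine $2\times 2$ arithmetic with $\mu=0$, $\nu=t_{2,n}$, and should be relegated to ``elementary computation.'' I would also double-check (iv) against Examples~\ref{e:mIs3AndkIs2} and \ref{e:seeingInverseCausesNoProblem} as a sanity test.
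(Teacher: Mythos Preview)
Your proposal is correct and follows essentially the same route as the paper's proof. The paper also establishes (i) by showing $CA\,W\,(CA)^{-1}=A$ via a short computation (your $W=(CA)^{-1}A(CA)$), derives (ii) for $a=1$ by writing $(C^{-1}AC)(A^kC)=C^{-1}A^2W^kA^{-1}=(A^{-1}C)^{n-3}W^{k-1}A^{-1}$ and then invoking the uniform form $\tilde{\mathcal C}_k=W^{k-1}(A^{-1}C)^{-1}$, proves (iii) from $(A^{-1}C)^{-(n-3)}(C^{-1}AC)=A^{-1}CA^{-1}$ (using $C^2=\mathrm{I}$ and $(A^{-1}C)^n=B^n=\mathrm{I}$, exactly as you note), and assembles (iv) by conjugating all blocks of $R_{k,v}$ except the rightmost $(A^kC)^{c_1}$ via (ii), telescoping the $(A^{-1}C)^{\pm(n-3)}$, and then applying the $s=1$ computation (i.e.\ your use of (iii)) to the tail to produce $\tilde{\mathcal C}_k^{c_1-1}W^{k-1}A^{-1}$.
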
 
%-------------------------------------------------------------------------------------------
\begin{proof}    We have  $CA \, W \, (CA)^{-1} = CA\,  A^{-2} C(A^{-1}C)^{n-3} \,(CA)^{-1} = (C A^{-1})^{n-1} C^{-1} = A C^{-1} C^{-1} = A$, as $C$ has projective order two.    Thus for any $k$,  we have $CA \, W^k \, (CA)^{-1} = A^k$ from which (i) holds.

Now, $(C^{-1}AC)  A^kC =  (C^{-1}AC)   C A  W^k A^{-1} = C^{-1}A^2 W^k A^{-1}= (A^{-1}C)^{n-3} W^{k-1} A^{-1}$. 
Since  $\tilde{\mathcal C}_k$  equals $W^{k-1}(A^{-1}C)^{-1}$, one has  $(C^{-1}AC)  A^kC =  (A^{-1}C)^{n-3}  \tilde{\mathcal C}_k (A^{-1}C) A^{-1}$. Since $(A^{-1}C)^{n-3}= (A^{-1}C)^3 = (A^{-1}C) A^{-1} (C^{-1}AC)^{-1}$, the second statement holds. 

Since $(A^{-1}C)^{-(n-3)} (C^{-1}AC)   =  (A^{-1}C)^3 C^{-1}A C =  (A^{-1}C)^2 C = A^{-1} C A^{-1}$, 
we find  $W^{k-1} A^{-1} = W^{k-1}(A^{-1}C)^{-1} \;   A^{-1} C A^{-1}  =\tilde{\mathcal C}_k(A^{-1}C)^{-(n-3)} (C^{-1}AC)$.  Thus, the third statement holds. 

From (ii)   and (iii), 
\[(C^{-1}AC) (A^kC)^{c_1}  = (A^{-1}C)^{n-3}\, \tilde{\mathcal C}_{k}^{\, \mathstrut c_1-1} \, \tilde{\mathcal C}_{k}\,(A^{-1}C)^{-(n-3)}(C^{-1}AC) = (A^{-1}C)^{n-3}\, \tilde{\mathcal C}_{k}^{\, \mathstrut c_1-1} W^{k-1} A^{-1}. \]
Thus (iv) holds when $s=1$.  When $s>1$, repeated applications of (ii)  give  
\[(C^{-1}AC) R_{k,v}  =  (A^{-1} C)^{n-3} \; \tilde{\mathcal C}^{c_s}\tilde{\mathcal D}^{d_{s-1}} \cdots  \tilde{\mathcal D}^{d_1} (A^{-1}C)^{-(n-3)} (C^{-1}AC) \tilde{\mathcal C}^{c_1}.\]
Thus, the previous displayed equation applies to  show that here also (iv) holds.
 
\end{proof}

 \subsection{Initial digits of right endpoints}  We recycle the notation of \S~\ref{ss:CKS}  to the setting here of  $m=2$.    Fix  $n\ge 3$ and set $t = t_{2,n}$.  Since $AC\cdot x = t-1/x$ (here $A$ now denotes $A_{2,n}$ and $C$ denotes $C_2$),   an elementary calculation shows that given $\zeta_{1,1} = 1 - \sqrt{1 - 1/t^2}$.    The largest value of $x$ we contemplate is $x=t$; since $AC\cdot t = t^2-1/t$ is less than or equal to $t$ for all $n \ge 3$, we find that $I_{1,1} = [ 1 - \sqrt{1 - 1/t^2}, 1]$ for $n>3$.  Note that this value, $\zeta_{1,1}$ is less than $1/2$.   When $n=3$ one finds that exactly when $\alpha=1$ does $T_{2,3,\alpha}(x)$ ever equal $AC\cdot x$ and in fact this equality holds only for $x=1$.    A similar direct calculation shows that for $n=3$,  we have $I_{2,1}$ extending from $\zeta_{2,1}$ to $1$ and that $\zeta_{2,1}$ is less than $1/2$.   In all cases,  moving to the left from this rightmost subinterval of the parameters,  one finds for all possible greater values of $k$ that  $r_1(\alpha) = A^kC\cdot r_0(\alpha)$ does appear.  Thus, the parameter intervals $0< \alpha < 1$ are partitioned by the subintervals $I_{k,1}$ indexed by the positive $k$, although (by a minor abuse) $I_{1,1} = \emptyset$ when $m=3$.

   We now say more about the orbit of points in $I_{1,1}$ when $n>3$.   Upon setting $m=2$ the element of [\cite{CaltaKraaikampSchmidt}, Eq. 3.1] becomes  $U = A^2C (AC)^{n-3}$; the final lines of the proof of [\cite{CaltaKraaikampSchmidt}, Prop. 3.1] show that $U\cdot t = t$ (with an argument that also holds in our case of $m=2$).  From this, one can show that for  $n>3$ when $k=1$ the only possible $v$ are limited to have $c_i \le n-3$.  Indeed, for any $x<0$ we have $AC \cdot x = t-1/x >t\ge r_0(\alpha)$ and hence already $AC$ never appears for any negative $x$.  A direct calculation with $0<x<t$ gives  $(AC)^{n-2}\cdot x = (C A^{-1})^2\cdot x = -1/-t-1/(-t+ x) = (x-t)/((x-t)t+1)$ and this is easily shown to be less than $x-t$.  This last signals that  $(AC)^{n-2}$ is inadmissible.   That is, when $n>3$ the digit $k=1$ can occur, but the only admissible $R_{1,v}, v = c_1 d_1 \dots d_{s-1} c_s$ are such that  $c_i<n-2$ for all $i$.

\subsection{Matching intervals} 
 The description of the matching intervals when $m=3$ of  [\cite{CaltaKraaikampSchmidt}, Theorem~4.12] needs only mild adjustment to give the following.     We use the notation of \S~\ref{ss:ExpanSynTree}, with the interpretation throughout that elements are determined in terms of $(m,n) = (2,n)$.

%-------------------------------------------------------------------------------------------
\begin{Thm}\label{t:partition}  Fix $m=2$ and $n\ge 3$.   Then  
\[(0, 1) = \bigcup_{k=1}^{\infty}\, I_{k,1}\,.\]

If $n=3$ then $I_{1,1} = \emptyset$.   
Furthermore, for each $k \in \mathbb N$ and each $v \in \mathcal V$, one has
\[ I_{k,v} = J_{k,v} \cup \, \bigcup_{q=q_0}^{\infty}\, I_{k, \Theta_q(v)}\,,\]
where $q_0 = 0$ unless $v= c_1$, in which case $q_0= -1$.    Moreover, when $n>3$ one has  $I_{1,v} = \emptyset$ for all $v= c_1 \dots$ with $c_1>n-3$.
 \end{Thm}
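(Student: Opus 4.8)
The plan is to mirror the structure of the proof of [\cite{CaltaKraaikampSchmidt}, Theorem~4.12], keeping track of the two changes forced by setting $m=2$: the letter $w$ shrinks from $w_{3,n}=(-1)^{n-2},-2,(-1)^{n-3},-2$ to $w_{2,n}=(-1)^{n-3},-2$, and the rightmost cylinder can now also carry the digit $k=1$ (through the element $AC$), subject to the admissibility constraint $c_i\le n-3$ isolated in \S\ref{ss:matchingRel}. First I would establish the top-level decomposition $(0,1)=\bigcup_{k\ge 1} I_{k,1}$. This is a direct consequence of the discussion just before the theorem: for $\alpha$ in the interior of $\mathbb I_{2,n,\alpha}$ the first digit $d_\alpha^{\alpha}(r_0(\alpha))$ is determined by which interval $A^kC\cdot r_0(\alpha)$ lands in, these are nested subintervals indexed by $k\in\mathbb N$, and as $\alpha$ increases from $0$ to $1$ the value $r_0(\alpha)=\alpha t$ sweeps so that the first digit runs monotonically through all positive $k$ (with $k=1$ achievable only for $n>3$, and for $n=3$ the digit $k=1$ occurring solely at $\alpha=1$, whence $I_{1,1}=\emptyset$). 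I would record the endpoint computation $\zeta_{1,1}=1-\sqrt{1-1/t^2}$ already given, and verify the boundary values $\zeta_{k,1}\to 0$ as $k\to\infty$, so that the $I_{k,1}$ genuinely exhaust $(0,1)$ up to their shared endpoints.

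Next I would prove the recursive formula $I_{k,v}=J_{k,v}\cup\bigcup_{q\ge q_0} I_{k,\Theta_q(v)}$. The mechanism is identical to the $m=3$ case: on $I_{k,v}$ the orbit of $r_0(\alpha)$ has the fixed prefix $\overline d(k,v)$ by definition \eqref{e:defIkV}, and $R_{k,v}\cdot r_0(\alpha)$ is again a point varying monotonically with $\alpha$; the sub-subinterval $J_{k,v}=[\zeta_{k,v},\eta_{k,v})$ is exactly the set where synchronization \eqref{e:synchronizationExplicit} occurs before the prefix is extended, and on the complementary part $[\eta_{k,v},\cdot)$ the prefix extends in one of the ways encoded by the operators $\Theta_q$ of Definition~\ref{d:thetaQ}. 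The combinatorial bookkeeping — that the extensions of $\overline d(k,v)$ that actually arise are precisely the $\overline d(k,\Theta_q(v))$ for $q\ge q_0$, with $q_0=-1$ in the exceptional one-letter case $v=c_1$ and $q_0=0$ otherwise — is the content of the tree structure of $\mathcal V$ (Definitions~\ref{d:defVprime}--\ref{d:descendents}) and I would invoke it verbatim from \cite{CaltaKraaikampSchmidt}, since the tree $\mathcal V$ itself does not depend on $m$; what must be rechecked is that the matrix identities underlying the synchronization still hold, and that is exactly Lemma~\ref{l:shortRightId}(iv), $(C^{-1}AC)R_{k,v}=L_{k,v}$, together with parts (i)--(iii) feeding it.

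Finally, the admissibility restriction: for $n>3$ and $v=c_1\cdots$ with $c_1>n-3$ I would show $I_{1,v}=\emptyset$. This is immediate from the computation in \S\ref{ss:matchingRel}: for $0<x<t$ one has $(AC)^{n-2}\cdot x=(x-t)/\big((x-t)t+1\big)<x-t$, so the word $(AC)^{n-2}$ is inadmissible (the orbit would exit the interval on the left), hence no admissible $R_{1,v}$ can contain a block $(AC)^{c_i}$ with $c_i\ge n-2$; in particular $c_1\le n-3$ is forced. I expect the main obstacle to be the same one it was in \cite{CaltaKraaikampSchmidt}: verifying that the $J_{k,v}$ and the $I_{k,\Theta_q(v)}$ tile $I_{k,v}$ with no gap and no overlap, i.e.\ that $\eta_{k,v}=\zeta_{k,\Theta_{q_0}(v)}$ and $\eta_{k,\Theta_q(v)}=\zeta_{k,\Theta_{q+1}(v)}$ and that the $I_{k,\Theta_q(v)}$ shrink to the common limit point as $q\to\infty$; this continuity-of-endpoints argument relies delicately on the fixed-point equations $R_{k,v}\cdot r_0(\zeta_{k,v})=\ell_0(\zeta_{k,v})$ and $L_{k,v}\cdot r_0(\eta_{k,v})=r_0(\eta_{k,v})$ and on the monotonicity of the relevant M\"obius maps, and it is where the $m=2$ value of $w$ (hence of $W=A^{-2}C(A^{-1}C)^{n-3}$ in \eqref{eq:defW} and of $L_{k,v}$ in \eqref{eq:defL}) must be substituted carefully throughout. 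Everything else is a routine transcription of the $m=3$ argument.
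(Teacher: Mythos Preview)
Your proposal is correct and takes essentially the same approach as the paper: the paper's proof is literally two sentences stating that the $m=3$ argument of \cite{CaltaKraaikampSchmidt}, Subsections~4.3 and~4.4 succeeds \emph{mutatis mutandis}, with the special restrictions (the $n=3$ exception and the $c_1\le n-3$ bound for $k=1$) coming from the preceding discussion. You have simply unpacked in more detail which pieces of that transcription require attention---the revised $w$, the role of Lemma~\ref{l:shortRightId}(iv), and the endpoint-matching verification---which is exactly the content the paper leaves implicit. One small correction: the admissibility computation $(AC)^{n-2}\cdot x<x-t$ and the $\zeta_{1,1}$ calculation live in the (unlabeled) subsection ``Initial digits of right endpoints,'' not in \S\ref{ss:matchingRel}.
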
 
%-------------------------------------------------------------------------------------------
The proof of the analog of the theorem in the case of $m=3$ is given in  [\cite{CaltaKraaikampSchmidt}, Subsections 4.3 and 4.4].  This proof succeeds {\em mutatis mutandis} for the main cases above.    The special restrictions in our setting of $m=2$ are a consequence of the discussion in the previous subsection.  

\subsection{Planar extensions along matching intervals}\label{ss:PlanarExtMatch}

\subsubsection{Planar extensions $\Omega_{2,n, \alpha}$ for $\alpha$ interior to a matching interval}   In  [\cite{CaltaKraaikampSchmidtContinEntrop}, Table~1]    one is pointed to the appropriate definitions  of what we denote as $\Omega_{3,n, \alpha}$.    Those definitions  for  their case of `small $\alpha$' apply directly to the case of $m=2$, giving  the various  $\Omega_{2,n, \alpha}$ with $\alpha \in (0,1)$.   A main reason that the constructions and arguments of \cite{CaltaKraaikampSchmidtContinEntrop} succeed even when $m=2$ is that the key lemma, [\cite{CaltaKraaikampSchmidtContinEntrop}, Lemma~14] is based on the fact that matrices $A^p C, p \in \mathbb Z$ are of the shape  $\begin{pmatrix} a&b\\-b& 0\end{pmatrix}$.   Since that is also true when $m=2$,  for 
real $x$, we have 
\begin{equation}\label{e:yActionDigitsRelation}
(R A^pC R^{-1})^{-1}\cdot (-x) = - \; (A^pC\cdot x).
\end{equation}
   With this, we can pass from understanding of sequences of digits to actions of powers of our $\mathcal T_{2,n,\alpha}$.

 Fixing $(k,v)$, recall that $\underline{S} = \underline{S}(k,v)$ denotes the length of $\underline{d}(k,v)$ as a word in $\{-1, -2\}$.  For $\alpha$ interior to a matching interval $J_{k,v}$,   let 
  \[\Omega^+ =  ( [\ell_{\underline{S}+1}(\alpha), r_0(\alpha)) \times [0,  -\ell_{\underline{S} - i_{\underline{S}+1}}(\zeta_{k,v}]) \cup  \bigcup_{a= 1}^{\underline{S}+1}\, ( [\ell_{i_a}(\alpha), \ell_{i_{a+1}}(\alpha)) \times [0,  -\ell_{\underline{S} - i_a}(\zeta_{k,v}]), \] where 
 $\ell_{i_1}(\alpha), \dots,  \ell_{i_{\underline{S}+1}}(\alpha)$ gives the initial $T_{2,n,\alpha}$-orbit of $\ell_0(\alpha)$ in increasing order as real numbers, and similarly for the $\ell_{i_a}(\zeta_{k,v})$.  It is helpful to define the values $\tau(i)$  by $\tau(j) = a$ exactly when $i_a = j$.      Thus,   $y_{\tau(i)} = - \ell_{\underline{S}-i}(\zeta_{k,v})$.    That the $y_i$ are increasing is shown in [\cite{CaltaKraaikampSchmidtContinEntrop}, Lemma~35].  Hence, $y_{\underline{S}+1} = - \ell_0(\zeta_{k,v})$.

       Similarly, let 
 \[ \Omega^{-} = ([\ell_0(\alpha), r_{\overline{S}}(\alpha)] \times  [- r_{j_{-1}}(\eta_{k,v}), 0]) \cup  \bigcup_{b=-1}^{ -\overline{S}}\,    ([ r_{j_{b-1}}(\alpha), r_{j_b}(\alpha)) \times [- r_{\overline{S}+b}(\eta_{k,v}), 0]),   \]
where now $r_{j_{-1}}(\alpha), r_{j_{-2}}(\alpha),\dots, r_{j_{-\overline{S}-1}}(\alpha)$  denotes the initial $T_{2,n,\alpha}$-orbit of $r_0(\alpha)$ in decreasing order, and similarly for the $r_{j_b}(\eta_{k,v})$ and $\overline{S} = \overline{S}(v) $ is the length of $\overline{d}(k,v)$ as a word in $\{k, k+1\}$.   Again a result in \cite{CaltaKraaikampSchmidtContinEntrop} shows that   $y_{-1}> y_{-2} > \cdots >y_{-\overline{S}-1}$.  Note that $y_{-\overline{S}-1} = - r_0(\eta_{k,v})$.

 The arguments of [\cite{CaltaKraaikampSchmidtContinEntrop}, Section~5] show that $\mathcal T_{3,n, \alpha}$ is bijective on the domain $\Omega_{3,n, \alpha}$ up to null sets (and thus our two descriptions of $\Omega_{3,n,\alpha}$ agree).   Those arguments discuss the portions of the domain which fiber over the respective cylinders of the interval map, and their images.  As Figure~\ref{f:mIs2NIs2AlphaIsPt2} hints (in an albeit particularly simple case)  these arguments succeed for $m=2$ and any $\alpha$ interior to matching intervals.   This is due, besides the aforementioned success of  [\cite{CaltaKraaikampSchmidtContinEntrop}, Lemma~14],  to the fact that those arguments never need to consider the order of $C$, but are rather based on the geometry of the actions of $A$ and $C$.    

\subsubsection{An example:  $\Omega_{2,3,1/5}$}\label{sss:OneFifth}  Our $\Omega_{2,3,1/5}$, see  Figure~\ref{f:mIs2NIs2AlphaIsPt2},  must agree  with the attractor for $\alpha= 1/5$ shown in [\cite{CarminatiIsolaTiozzo}, Figure~8].  That this is so,  up  to a change of coordinates (due to different normalizing choices), the reader can easily verify from the following detailed information.   

  With $m=2, n= 3$, one has $t = t_{2,3} = 1$ and    the equality $A^5 C \cdot 1/5 = 0$ is trivially checked.   Thus,   $\alpha = 1/5$ belongs to the matching interval $J_{k=5, v=1}$.   The left endpoint of this matching interval is $\zeta = 3 - 2 \sqrt{2}\sim 0.172$, as   verified by  $A^5 C \cdot \zeta = \zeta -1$.   Since $v = c_1=1$,    \eqref{eq:defL} gives  $L_{5,1} A = W^4 =  (A^{-2} C)^4$.   From the definitions in the previous subsubsection,  the upper horizontal boundaries of $\Omega_{2,3,1/5}$ have $y$-coordinates   $\{-\ell_0(\zeta),   -\ell_1(\zeta),  -\ell_2(\zeta),  -\ell_3(\zeta),  -\ell_4(\zeta)\} \sim \{ 0.828,  0.793,  0.739,  0.646,  0.453\}$.     
   Since $\eta = (\sqrt{2}-1)/2 \sim 0.207$ satisfies $C^{-1} A C A^5 C\cdot \eta = \eta + 1$, this is the right endpoint of the matching interval.      Thus the lower horizontal boundaries of $\Omega_{2,3,1/5}$ are $\{-r_0(\eta),   -r_1(\eta)\}$ of approximate values  $\{ -0.207, -0.172\}$.

% 
% constants for figure for (m,n,alpha) = (2,3,1/5)
\def\reta{0.207}%
\def\rzeta{0.172}%
\def\lGoverSqrtFive{0.276393}%
\def\litG{0.618034}%
\def\lGSqrd{0.381966}%
%-----------------------------Figure Planar Ext  for (m,n,alpha) = (2,3,1/5) -------------------------------
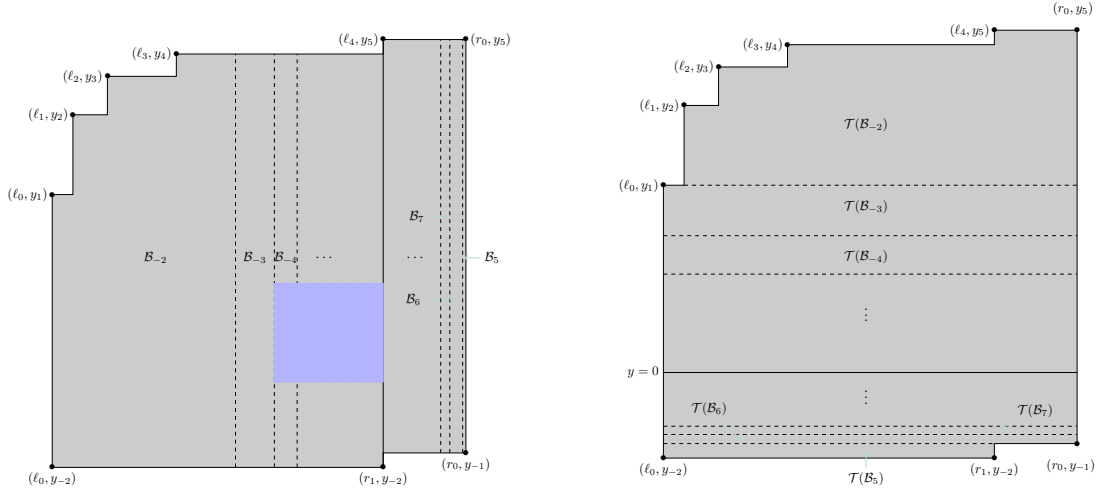
\begin{figure}[h]
\scalebox{.5}{
\noindent
\begin{tabular}{lll}
\begin{tikzpicture}[scale=11,fill=black!20]
%outline of Omega
\filldraw(-0.8,-\reta)--(0,-\reta)--(0,-\rzeta)--(0.2, -\rzeta)--(0.2, 0.828)--(0, 0.828)--(0, 0.793)--(-1/2, 0.793)--(-1/2, 0.739)--(-2/3, 0.739)--(-2/3, 0.646)--(-3/4, 0.646)--(-3/4, 0.453)--(-0.8, 0.453)--cycle;
           %Place dot at vertices to be labeled
 \foreach \x/\y in {-0.8/-\reta, 0/-\reta, 0.2/-\rzeta, 0.2/0.828, 0/0.828, -0.5/0.793,-0.666/0.739,-0.75/0.646,-0.8/0.453%
} { \node at (\x,\y) {$\bullet$}; } 
% draw coordinate axes
\draw (0, 0.828)--(0,-\reta);% node[below=.5ex]{$0$};
%\draw (-0.8, 0)--(0.2, 0); 
%label points
%  top
\node[left] at (-0.8, 0.453) {$(\ell_0, y_1)$};
\node[left] at (-3/4, 0.646) {$(\ell_1, y_2)$};
\node[left] at (-2/3, 0.739) {$(\ell_2, y_3)$};
\node[left] at (-1/2, 0.793) {$(\ell_3, y_4)$};
\node[left] at (0, 0.828) {$(\ell_4, y_5)$};
\node[right] at (0.2, 0.828) {$(r_0, y_5)$};
% bottom
\node[below] at (-0.8,-\reta) {$(\ell_0, y_{-2})$};
\node[below] at (0,-\reta) {$(r_1, y_{-2})$};
\node[below] at (0.2,-\rzeta) {$(r_0, y_{-1})$};
 %
 %label blocks
 %  left of zero
\draw[dashed] (-0.357,-\reta)--(-0.357, 0.793);  
\node at (-0.55,0.3) {$\mathcal B_{-2}$};
\draw[dashed] (-0.263,-\reta)--(-0.263, 0.793);  
\node at (-0.31,0.3) {$\mathcal B_{-3}$};
\draw[dashed] (-0.208,-\reta)--(-0.208, 0.793);  
\node at (-0.235,0.3) {$\mathcal B_{-4}$};
\node at (-0.14,0.3) {\large{$\cdots$}};
 %  right of zero
 \draw[dashed] (0.192,-\rzeta)--(0.192, 0.828);  
%\node at (0.4,0.3) {$\mathcal B_{5}$ add arrow};
\node at (0.185, 0.3)[pin={[pin edge=<-, pin distance=12pt]0:{$\mathcal B_{5}$}}] {};
 \draw[dashed] (0.161,-\rzeta)--(0.161, 0.828);  
%\node at (0.175,0.2) {$\mathcal B_{6}$};
\node at (0.19, 0.2)[pin={[pin edge=<-, pin distance=24pt]180:{$\mathcal B_{6}$}}] {};
 \draw[dashed] (0.139,-\rzeta)--(0.139, 0.828);  
%\node at (0.1,0) {$\mathcal B_{7}$ add};
\node at (0.16, 0.4)[pin={[pin edge=<-, pin distance=12pt]180:{$\mathcal B_{7}$}}] {};
\node at (0.08,0.3) {\large{$\cdots$}};
% encircle region for first return in common with $\mathcal T_0
\draw[dotted, blue] (-0.263,0.238)--(0, 0.238);  
\draw[dotted, blue] (-0.263,0)--(0, 0);  
%outline of region for first return in common with $\mathcal T_0
%outline of Omega
\filldraw[blue!30] (-0.263,0.238)--(0, 0.238)--(0, 0)--(-0.263,0)--cycle;
\end{tikzpicture}
&\phantom{a long arrow}% just an arrow (suppressed in the current version)
%\begin{tikzpicture}[x=2cm,y=5cm] 
%\node at (2, 0) {\phantom{here}};
%\draw[->, ultra thick] (1.5, 1)--(2, 1) -- (1.8, 1.1) -- (2.5,1.1);
%\end{tikzpicture}
&
\begin{tikzpicture}[scale=11,fill=black!20]  % Image!
%outline of Omega
\filldraw(-0.8,-\reta)--(0,-\reta)--(0,-\rzeta)--(0.2, -\rzeta)--(0.2, 0.828)--(0, 0.828)--(0, 0.793)--(-1/2, 0.793)--(-1/2, 0.739)--(-2/3, 0.739)--(-2/3, 0.646)--(-3/4, 0.646)--(-3/4, 0.453)--(-0.8, 0.453)--cycle;
           %Place dot at vertices to be labeled
 \foreach \x/\y in {-0.8/-\reta, 0/-\reta, 0.2/-\rzeta, 0.2/0.828, 0/0.828, -0.5/0.793,-0.666/0.739,-0.75/0.646,-0.8/0.453%
} { \node at (\x,\y) {$\bullet$}; } 
% draw coordinate axes
%\draw (0, 0.828)--(0,-\reta);% node[below=.5ex]{$0$};
\draw (-0.8, 0)--(0.2, 0); 
\node[left] at (-0.8, 0) {$y=0$};
%label points
%  top
\node[left] at (-0.8, 0.453) {$(\ell_0, y_1)$};
\node[left] at (-3/4, 0.646) {$(\ell_1, y_2)$};
\node[left] at (-2/3, 0.739) {$(\ell_2, y_3)$};
\node[left] at (-1/2, 0.793) {$(\ell_3, y_4)$};
\node[left] at (0, 0.828) {$(\ell_4, y_5)$};
\node[right] at (0.13, 0.88) {$(r_0, y_5)$};
% bottom
\node[below] at (-0.8,-\reta) {$(\ell_0, y_{-2})$};
\node[below] at (0,-\reta) {$(r_1, y_{-2})$};
\node[below] at (0.185,-\rzeta-0.025) {$(r_0, y_{-1})$};
 %
 %label image of blocks
 %  top 
\draw[dashed] (-0.75,0.453)--(0.2, 0.453);  
\node at (-0.31,0.6) {$\mathcal T(\mathcal B_{-2})$};
\draw[dashed] (-0.8,0.331)--(0.2, 0.331);  
\node at (-0.31,0.4) {$\mathcal T(\mathcal B_{-3})$};
\draw[dashed] (-0.8,0.238)--(0.2, 0.238);  
\node at (-0.31,0.28) {$\mathcal T(\mathcal B_{-4})$};
\node at (-0.31,0.15) {\large{$\vdots$}};
 %  bottom
\draw[dashed] (-0.8,-\rzeta)--(0, -\rzeta);  
%\node at (0.4,0.3) {$\mathcal B_{5}$ add arrow};
%\node at (-0.31,-0.2) {$\mathcal T(\mathcal B_{5})$};
\node at (-0.31,-0.18)[pin={[pin edge=<-, pin distance=12pt]270:{$\mathcal T(\mathcal B_{5})$}}] {};
\draw[dashed] (-0.8,-0.15)--(0.2, -0.15);   
\node at (-0.6, -0.175)[pin={[pin edge=<-, pin distance=18pt]120:{$\mathcal T(\mathcal B_{6})$}}] {};
\draw[dashed] (-0.8,-0.13)--(0.2, -0.13);  
%\node at (0.1,0) {$\mathcal B_{7}$ add};
\node at (0, -0.145)[pin={[pin edge=<-, pin distance=12pt]30:{$\mathcal T(\mathcal B_{7})$}}] {};
\node at (-0.31,-0.05) {\large{$\vdots$}};
\end{tikzpicture}
\end{tabular}
}
\caption{On the left: The planar domain $\Omega_{2,3,1/5}$, and its partitioning `blocks'; each $\mathcal B_i$ fibers over the cylinder where $T_{2,3,1/5}$ is given by $x \mapsto A^iC\cdot x$.   The $T_{2,3,1/5}$-orbit of $r_0(1/5) = 1/5$ is labeled by $r_0, r_1$; the $T_{2,3,1/5}$-orbit of $\ell_0(1/5) = -4/5$ is labeled by $\ell_0,\ell_1, \cdots $.  One has $r_2(\alpha) = \ell_5(\alpha)$ for  other $\alpha$ in the matching interval $J_{k,v} = J_{5,1}$, but here the special equality arises:   $r_1 = \ell_4 = 0$.    The $y$-values of the horizontal boundaries are derived from appropriate $T_{2,3,\eta_{5,1}}$-  and  $T_{2,3,\eta_{5,1}}$-orbits, see the discussion of \S~\ref{sss:OneFifth}.  The blue rectangle indicates the region $\Omega^{+}_{\alpha, d}$ of   Lemma~\ref{l:firstReturnTo2ndQuadrant}.    On the right:  The image of $\Omega_{2,3,1/5}$ under $\mathcal T_{2,3,1/5}$ is $\Omega_{2,3,1/5}$ itself.}
\label{f:mIs2NIs2AlphaIsPt2}
\end{figure}
%-----------------------------------------------------------------------

\subsection{Increasing, decreasing, constancy  of $\alpha \mapsto \mu(\Omega_{2,n,\alpha})$ on matching intervals}\label{ss:incrDecr}    The case of $n=3$ is treated in \cite{CarminatiIsolaTiozzo}.   We thus assume $n>3$.       We determine the behavior of $\alpha \mapsto \mu(\Omega_{2,n,\alpha})$ in our setting by describing the elementary geometry of the passage from  $\Omega_{2, n, \alpha}$ to $\Omega_{2, n, \alpha'}$ for $\alpha, \alpha'$ in the same matching interval.  In fact,  when $\alpha, \alpha'$ are  close neighbors as discussed in \cite{CaltaKraaikampSchmidtPfsErgodicity, CaltaKraaikampSchmidtContinEntrop} then a  more refined result can be achieved by using the  technique of `quilting', see [\cite{CaltaKraaikampSchmidtPfsErgodicity}, Main Theorem~2]

%--------------------%--------------------%--------------------%--------------------
% 
% constants for figure for (m,n,alpha) = (2,3,1/5)
\def\reta{0.207}%
\def\rzeta{0.172}%
\def\lGoverSqrtFive{0.276393}%
\def\bigG{1.618034}%
\def\litG{0.618034}%
\def\lGSqrd{0.381966}% 
%--------------------------------- Figure quilt from Omega_{2,3,0.18} to Omega_{2,3,0.2} Of v=1, k=1----------------------------------------------
\begin{figure}[h]
\scalebox{1}{
\begin{tikzpicture}[scale=7]
%outline of Omega
\draw(-0.8,-\reta)--(0,-\reta)--(0,-\rzeta)--(0.18, -\rzeta)--(0.18, 0.828)--(0, 0.828)--(0, 0.793)--(-1/2, 0.793)--(-1/2, 0.739)--(-2/3, 0.739)--(-2/3, 0.646)--(-3/4, 0.646)--(-3/4, 0.453)--(-0.8, 0.453)--cycle;
           %Place dot at vertices to be labeled
 \foreach \x/\y in 
 {-0.8/-\reta, -0.56/-\reta, 0/-\reta, 0.2/-\rzeta, 0.18/0.828, -0.36/0.828, -0.61/0.793,-0.72/0.739,-0.78/0.646,-0.667/ 0.646,-0.82/0.453%
} { \node at (\x,\y) {$\bullet$}; } 
% draw coordinate axes
\draw (0, 0.828)--(0,-\reta);% node[below=.5ex]{$0$};
%\draw (-0.8, 0)--(0.2, 0); 
%label points
%  top
\node[shift={(-0.7,0)}] at (-0.82, 0.453) {$(\ell_0, y_1)$};
\node[left, shift={(0,-0.07)}] at (-0.78,0.646) {$(\ell_1, y_2)$};
\node[right, shift={(0,-0.2)}] at (-2/3, 0.646) {$(\ell'_2, y_2)$};
\node[left, shift={(0,0.1)}] at (-0.72, 0.739) {$(\ell_2, y_3)$};
\node[left, shift={(0,0.3)}] at (-0.61,0.793) {$(\ell_3, y_4)$};
\node[above] at (-0.36, 0.828) {$(\ell_4, y_5)$};
\node[above] at (0.18, 0.828) {$(r_0, y_5)$};
% bottom
\node[below, shift= {(0, -0.1)}] at (-0.8,-\reta) {$(\ell'_0, y_{-2})$};
\node[below, shift= {(0, -0.1)}] at (-0.56,-\reta) {$(r_1, y_{-2})$};
\node[below, shift= {(0, -0.1)}] at (0,-\reta) {$(r'_1, y_{-2})$};
\node[right] at (0.2,-\rzeta) {$(r'_0, y_{-1})$};
%
%Add at far right, Shift (\mathcal D) = \mathcal A
\draw[pattern=north east lines, pattern color=blue]  (0.18,-\rzeta)--(0.2,-\rzeta) --(0.2,0.828) --(0.18,0.828)-- cycle;
% add new \mathcal T(\mathcal A)
\draw[pattern=north east lines, pattern color=blue]  (-0.56,-\reta)--(0,-\reta) --(0,-\rzeta) --(-0.56,-\rzeta)-- cycle;
%Pointers and labels to additions
\node at (0.2,  0.3)[pin={[pin edge=<-, pin distance=12pt]0:{$\mathcal A = \mathcal T_{A}(\mathcal D)$}}] {};
\node at (-0.3,  -0.2)[pin={[pin edge=<-, pin distance=16pt]270:{$\mathcal T_{\alpha'}(\mathcal A)$}}] {};
%Delete at far left, \mathcal D
\draw[pattern=north west lines, pattern color=red]  (-0.82,-\reta)--(-0.8,-\reta) --(-0.8,0.453) --(-0.82,0.453)-- cycle; 
% delete old \mathcal T(\mathcal D)  
\draw[pattern=north west lines, pattern color=red]  (-0.78,0.453)--(-3/4,0.453) --(-3/4,0.646) --(-0.78,0.646)-- cycle; 
% delete old \mathcal T^2(\mathcal D)  
\draw[pattern=north west lines, pattern color=red]  (-0.72,0.646)--(-2/3,0.646) --(-2/3,0.739) --(-0.72,0.739)-- cycle; 
% delete old \mathcal T^3(\mathcal D)  
\draw[pattern=north west lines, pattern color=red]  (-0.61,0.739)--(-1/2,0.739) --(-1/2,0.793) --(-0.61,0.793)-- cycle; 
% delete old \mathcal T^4(\mathcal D)  
\draw[pattern=north west lines, pattern color=red]  (-0.36,0.793)--(0,0.793) --(0,0.828) --(-0.36,0.828)-- cycle; 
%Pointers and labels to deletions
\node at (-0.83,  0.1)[pin={[pin edge=<-, pin distance=12pt]180:{$\mathcal D$}}] {};
\node at (-0.8,  0.55)[pin={[pin edge=<-, pin distance=40pt]180:{$\mathcal T_{\alpha}(\mathcal D)$}}] {};
\node at (-0.73,  0.69)[pin={[pin edge=<-, pin distance=45pt]180:{$\mathcal T_{\alpha}^{2}(\mathcal D)$}}] {};
\node at (-0.55,  0.8)[pin={[pin edge=<-, pin distance=16pt]90:{$\mathcal T_{\alpha}^{3}(\mathcal D)$}}] {};
\node at (-0.14,  0.83)[pin={[pin edge=<-, pin distance=16pt]90:{$\mathcal T_{\alpha}^{4}(\mathcal D)$}}] {};
\end{tikzpicture}
}
\caption{{\bf Decreasing mass along $J_{1,5}$.}   With $m=2, n=3$,  both $\alpha = 0.18$ and  $\alpha' = 0.2$  lie in the matching interval $J_{1, 5}$.   Along any matching interval, the coordinates of all tops and bottoms of the rectangles which comprise the various $\Omega_{2,n,\alpha}$ are constant  (here $y_1, \dots y_5$ and $y_{-2}, y_{-1}$).   The function $\mathcal T_A$ maps the rectangle $\mathcal D = [\ell_0, \ell'_0)\times [r_{-\overline{S}-1}, y_1]$ to $\mathcal A = [r_0, r'_0)\times [r_{-1}, y_{\overline{S}+1}]$, where here we use simplified notation of $\ell_0$ for $\ell_0(\alpha)$ and $\ell'_0$ for $\ell'_0(\alpha)$, and similarly for $r_0, r'_)$.  To pass from $\Omega_{2,n,\alpha}$ to $\Omega_{2,n,\alpha'}$,  we delete the initial $\mathcal T_{2,n,\alpha}$-orbit of  $\mathcal D$ and add $\mathcal A$ and its initial $\mathcal T_{2,n,\alpha}$-orbit.   Since each step preserves $\mu$-measure, the sign of the change in mass is that of  $(\overline{S}(k) - \underline{S}(k,v))$. See Lemma~\ref{c:massIncreasesWhere}.}%
\label{f:changeOfMass}%
\end{figure}
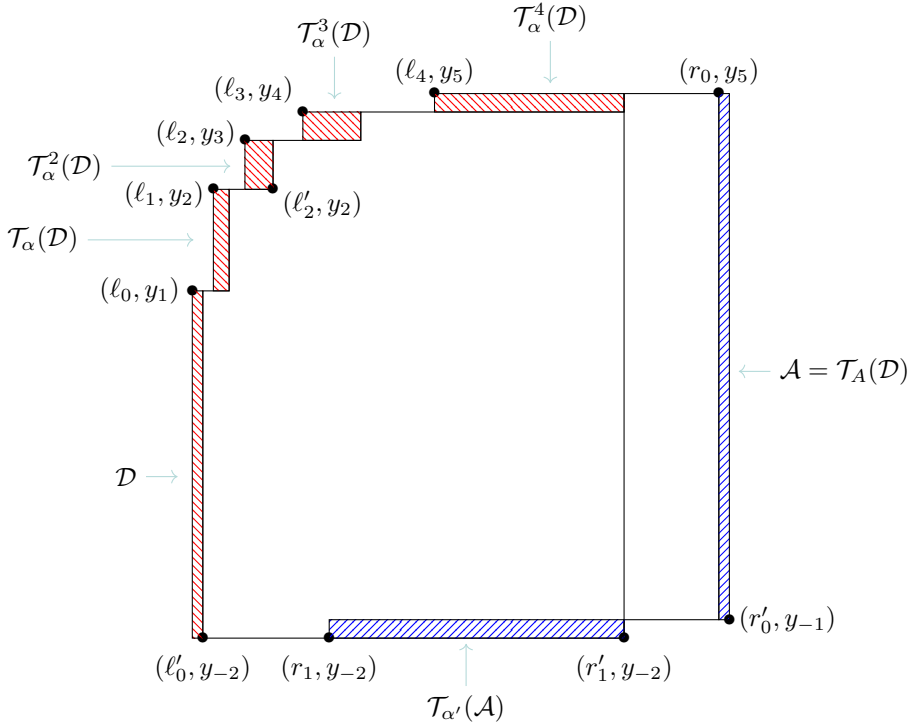
%-------------------------------------------------------------------------------------------

 Recall that given a matching interval $J_{k,v}$ that $\underline{S}(k,v)$ denotes the length of $\underline{d}(k,v)$ as a word in $\{-1, -2\}$, and $\overline{S}(v)$ is the length of $\overline{d}(k,v)$ as a word in $\{k, k+1\}$.  
 
%------------------------------------------------------------------------ 
\begin{Lem}\label{l:massIncreasesWhere}    Fix $n> 3$.  The function   $\alpha \mapsto  \mu(\Omega_{2,n,\alpha})$ increases on any matching interval $J_{k,v}$ for which $\underline{S}(k,v) < \overline{S}(v)$.   The function decreases on $J_{k,v}$ for which the inequality is in the other direction, and is constant when $\underline{S}(k,v) = \overline{S}(v)$.
\end{Lem}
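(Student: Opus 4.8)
The plan is to read the change in $\mu(\Omega_{2,n,\alpha})$, as $\alpha$ runs through a matching interval $J_{k,v}$, directly off the elementary geometry of the planar domains, exactly in the spirit of Figure~\ref{f:changeOfMass}. The first point to record is a rigidity statement: for $\alpha$ interior to $J_{k,v}$, the description $\Omega_{2,n,\alpha}=\Omega^{+}\cup\Omega^{-}$ recalled in \S\ref{ss:PlanarExtMatch} has \emph{all} of its horizontal edges at $y$-heights which are entries of the $T$-orbits of the fixed endpoints $\zeta_{k,v}$ and $\eta_{k,v}$, hence do not move with $\alpha$; only the vertical edges move, their $x$-coordinates being the entries of the initial $T_{2,n,\alpha}$-orbits of $\ell_0(\alpha)=(\alpha-1)t$ and $r_0(\alpha)=\alpha t$. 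Since the digit data of those orbits is constant on the interior of $J_{k,v}$, these $x$-coordinates vary continuously (and $\ell_0,r_0$ affinely and monotonically) with $\alpha$, with no change of order type. It therefore suffices to fix $\alpha<\alpha'$ interior to $J_{k,v}$, to compute $\mu(\Omega_{2,n,\alpha'})-\mu(\Omega_{2,n,\alpha})$ when $\alpha'$ is so close to $\alpha$ that $[\ell_0(\alpha),\ell_0(\alpha'))$ lies inside the leftmost cylinder of $T_{2,n,\alpha}$ and $[r_0(\alpha),r_0(\alpha'))$ inside the rightmost, and then to telescope over a fine partition of any $[\alpha,\alpha']\subset J_{k,v}$, extending to the closed interval by the continuity established in \S\ref{ss:conMuOm}.

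For such a pair $\alpha<\alpha'$ I would next identify the symmetric difference. On the left one deletes the full-height sliver $\mathcal{D}$ over $[\ell_0(\alpha),\ell_0(\alpha'))$ of the leftmost column of $\Omega_{2,n,\alpha}$, together with its forward images $\mathcal{T}_{2,n,\alpha}^{\,j}(\mathcal{D})$, $1\le j\le\underline{S}(k,v)$; these are precisely the strips that the steps of $\Omega^{+}$ shed as the orbit points $\ell_i(\alpha)$ slide rightward to $\ell_i(\alpha')$, and the $m=2$ synchronization --- available from Theorem~\ref{t:partition} and Lemma~\ref{l:shortRightId}(iv), the analog of \eqref{e:synchronizationExplicit} --- guarantees that this orbit closes up after exactly $\underline{S}(k,v)+1$ terms, at the synchronization point. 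On the right one adds $\mathcal{A}=\mathcal{T}_A(\mathcal{D})$: since $\mathcal{T}_A$ acts by $x\mapsto x+t$ on the first coordinate it carries $[\ell_0(\alpha),\ell_0(\alpha'))$ onto $[r_0(\alpha),r_0(\alpha'))$, so $\mathcal{A}$ is the full-height sliver over $[r_0(\alpha),r_0(\alpha'))$ of the rightmost column of $\Omega_{2,n,\alpha'}$, and with it one adds its forward images $\mathcal{T}_{2,n,\alpha}^{\,j}(\mathcal{A})$, $1\le j\le\overline{S}(v)$, attached to the steps of $\Omega^{-}$.

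Counting $\mu$-mass is then immediate: $\mathcal{T}_A$ and every M\"obius branch of $\mathcal{T}_{2,n,\alpha}$ preserve the measure $\mu$ of \eqref{e:muDefd} and act injectively on the pieces at hand (the latter because $\mathcal{T}_{2,n,\alpha}$ is bijective on $\Omega_{2,n,\alpha}$, by the $m=2$ version of \cite{CaltaKraaikampSchmidtContinEntrop} recalled in \S\ref{ss:PlanarExtMatch}), so every one of these slivers has the same $\mu$-mass $\mu(\mathcal{D})$. Hence
\[
\mu(\Omega_{2,n,\alpha'})-\mu(\Omega_{2,n,\alpha})=\bigl((\overline{S}(v)+1)-(\underline{S}(k,v)+1)\bigr)\mu(\mathcal{D})=\bigl(\overline{S}(v)-\underline{S}(k,v)\bigr)\mu(\mathcal{D}),
\]
with $\mu(\mathcal{D})>0$ whenever $\alpha<\alpha'$. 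As $\operatorname{sgn}\bigl(\overline{S}(v)-\underline{S}(k,v)\bigr)$ is independent of the chosen point of $J_{k,v}$, telescoping gives that $\alpha\mapsto\mu(\Omega_{2,n,\alpha})$ is strictly increasing on $J_{k,v}$ when $\underline{S}(k,v)<\overline{S}(v)$, strictly decreasing when $\underline{S}(k,v)>\overline{S}(v)$, and constant when $\underline{S}(k,v)=\overline{S}(v)$, which is the claim.

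The main obstacle is the second step: verifying that the symmetric difference is \emph{exactly} these two orbits of slivers, with orbit lengths exactly $\underline{S}(k,v)+1$ and $\overline{S}(v)+1$ and nothing extra. This amounts to checking that the forward orbit of $\mathcal{D}$ never straddles a cylinder boundary of $T_{2,n,\alpha}$ --- so that each $\mathcal{T}_{2,n,\alpha}^{\,j}$ really is a single measure-preserving M\"obius map on it --- tracking which step of the $\Omega^{\pm}$ staircases each image is glued to, and handling the degeneration of the slivers as $\alpha'\to\alpha$ and near the endpoints $\zeta_{k,v},\eta_{k,v}$. All of this is governed by the bijectivity argument of \S\ref{ss:PlanarExtMatch}; for close-neighbour pairs one may instead package the bookkeeping through the quilting of [\cite{CaltaKraaikampSchmidtPfsErgodicity}, Main Theorem~2] and then reach general $\alpha<\alpha'$ in $J_{k,v}$ by the rigidity of the first step together with continuity.
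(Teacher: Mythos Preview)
Your proposal is correct and follows essentially the same approach as the paper: identify the symmetric difference between $\Omega_{2,n,\alpha}$ and $\Omega_{2,n,\alpha'}$ as the $\mathcal{T}_{2,n,\alpha}$-orbit of a sliver $\mathcal{D}$ (deleted) and the $\mathcal{T}_{2,n,\alpha'}$-orbit of $\mathcal{A}=\mathcal{T}_A(\mathcal{D})$ (added), then count using $\mu$-invariance to get the formula $\mu(\Omega_{2,n,\alpha'})-\mu(\Omega_{2,n,\alpha})=(\overline{S}(v)-\underline{S}(k,v))\,\mu(\mathcal{D})$. The paper works directly with arbitrary $\alpha<\alpha'$ in $J_{k,v}$ rather than telescoping from close neighbours, and it carries out in detail the verification you flag as ``the main obstacle'' (that the orbit of $\mathcal{D}$ exactly accounts for the deleted steps, via the identities $e_j=y_{\tau(j)-1}$ proved from \cite{CaltaKraaikampSchmidtContinEntrop}, Lemmas~33, 34 and Corollary~44), but the architecture is the same.
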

%-----------------------------------------------------------------------
\begin{proof}    For $\alpha \in J_{k,v}$, the domain $\Omega_{2,n,\alpha}$ is the union of rectangles whose heights are depend only on the maps $T_{2,n, \zeta_{k,v}}$ and  $T_{2,n, \eta_{k,v}}$ whereas the widths of these rectangles are the distance between nearest elements of 
 $\{\ell_i(\alpha)\,|\, 0\le i \le \underline{S}(k,v)\}\cup \{r_j(\alpha)\,|\, 0\le j \le \overline{S}(v)\}$.  Along the $ J_{k,v}$ the ordering of this set remains unaltered.   
 
 Given $\alpha < \alpha'$ both in $J_{k,v}$ we begin with $\Omega_{2,n,\alpha}$.  Let $\mathcal D = [\ell_0, \ell'_0)\times [r_{-\overline{S}(v)-1}, y_1]$,  with simplified notation of $\ell_0$ for $\ell_0(\alpha)$ and $\ell'_0$ for $\ell'_0(\alpha)$.    Also let $\mathcal A = T_A(\mathcal D)$.  
 
 We claim that 
 \[ \Omega_{2,n,\alpha'} = \bigg( \Omega_{2,n,\alpha} \setminus \bigcup_{i=0}^{\underline{S}(k,v)}\, \mathcal T_{2,n,\alpha}^i(\mathcal D)\bigg) \cup \bigcup_{i=0}^{\overline{S}(v)}\, \mathcal T_{2,n,\alpha'}^i(\mathcal A)\big),\]
 see Figure~\ref{f:changeOfMass}.
Since $\mathcal T_A$ and each application of either $\mathcal T_{2,n,\alpha}$ or $\mathcal T_{2,n,\alpha'}$ preserves $\mu$-measure, the  claim  implies  $\mu(\Omega_{2,n,\alpha'}) = \mu(\Omega_{2,n,\alpha}) + (\,\overline{S}(v) - \underline{S}(k,v)\,) \, \mu(\mathcal D)$, whereby the result holds. 
 
 We now show that the claim does hold.

 \smallskip
 \noindent
 {\it Step 1: Correct deletion.}   Since both $\alpha, \alpha' \in J_{k,v}$, the $T_{2,n,\alpha}$-digits of $\ell_0(\alpha)$ through $\ell_{ \underline{S}(k,v)-1}(\alpha)$ agree with the $T_{2,n,\alpha'}$-digits of $\ell_0(\alpha')$ through $\ell_{\underline{S}(k,v)-1}(\alpha')$ and similarly for the respective digits of $r_0, \dots, r_{\overline{S}(v)-1}$ and $r'_0, \dots, r'_{\overline{S}(v)-1}$.     For $0\le i \le \underline{S}(k,v)-1$ let $E_i$ be the suffix of $L_{k,v}A$ of length $i$.    With this,  $\ell_i = T_{2,n,\alpha}^i(\ell_0) = E_i\cdot \ell_0$.    
 
From \eqref{e:yActionDigitsRelation} 
it follows that for  $i \in \{0, \dots, \underline{S}-1\}$, one has 
\begin{equation}\label{e:moveYlikeX}
y_{\tau(i+1)} = R A^{p_i}C R^{-1}\cdot  y_{\tau(i)},
\end{equation}
 where $p_i$ is the digit of $\ell_i$ (common throughout $J_{k,v}$), see  [\cite{CaltaKraaikampSchmidtContinEntrop}, Lemma~25 (i)].   It follows that $\mathcal T_{2,n,\alpha}^i(\mathcal D) = [\ell_i, \ell'_i)\times [e_i, y_{\tau(i)}]$  with $e_i = R E_i R^{-1}\cdot  y_{-\overline{S}(v)-1}$.   Thus,  the union of the $\mathcal T_{2,n,\alpha}^i(\mathcal D)$ accounts exactly for $\Omega_{2,n,\alpha'}\setminus \Omega_{2,n,\alpha}$ if and only if $e_j = y_{\tau(j)-1}$ for all $1\le j \le \underline{S}(k,v)$.

Let $p_0$ be the initial digit of $\ell_0(\alpha)$, again this is common throughout $J_{k,v}$.     That $e_j = y_{\tau(j)-1}$ for all $1\le j \le \underline{S}(k,v)$  is implicit in [\cite{CaltaKraaikampSchmidtContinEntrop}], but the proof is not given.   We give the proof.

In our case of $n>3$, [\cite{CaltaKraaikampSchmidtContinEntrop}, Lemma~33] implies  that $\tau(1)-1 = \tau(1+i_{\underline{S}})$.  From 
[\cite{CaltaKraaikampSchmidtContinEntrop}, Lemma~34],   $\tau(\underline{S}-i_2) = \underline{S}$  and equivalently $i_{\underline{S}} = \underline{S}- i_2$.   Therefore, $\tau(1+i_{\underline{S}}) = \tau(\underline{S}- (i_2 -1))$. By definition, $y_{\tau(\underline{S}- (i_2 -1))} = - \ell_{i_2-1}(\zeta_{k,v})$.   Thus,  $y_{\tau(1) - 1} = - \ell_{i_2-1}(\zeta_{k,v})$.
Thus \eqref{e:yActionDigitsRelation} implies  $(R A^{p_0} C R^{-1})^{-1}\cdot y_{\tau(1) - 1} = -A^{p_0} C\cdot \ell_{i_2-1}(\zeta_{k,v})$.  By  [\cite{CaltaKraaikampSchmidtContinEntrop}, Corollary~44],  $\ell_0(\eta_{k,v}) = \ell_{i_2}(\zeta_{k,v})$ and  $\ell_{i_2}(\zeta_{k,v}) = A^{p_0-1} C \cdot \ell_{i_2-1}(\zeta_{k,v})$.  Thus, $(R A^{p_0} C R^{-1})^{-1}\cdot y_{\tau(1) - 1} = -A\cdot \ell_0(\eta_{k,v}) = -r_0(\eta_{k,v})$.  Therefore, $y_{\tau(1) - 1} = R A^{p_0} C R^{-1}\cdot  -r_0(\eta_{k,v})$.

Now, suppose that some $e_j = y_{\tau(j)-1}$ with $1\le j<\underline{S}(k,v)-1$.   By \eqref{e:moveYlikeX}, if both $\ell_j(\zeta_{k,v})$ and $\ell_{i_{\tau(j)-1}}(\zeta_{k,v})$ share the same digit, $p_j$, then $e_{j+1} = y_{\tau(j+1)-1}$.   If they do not share the same digit, then by the shape of $\Omega_{2,n,\alpha}$ it can only be that the digit corresponding to $\ell_{i_{\tau(j)-1}}(\zeta_{k,v})$ is rather $p_j-1$ and in fact  $\ell_{i_{\tau(j)}}(\zeta_{k,v})$ is the least entry of the initial orbit segment of $\ell_0(\zeta_{k,v})$ in the $p_j$ cylinder, while  $\ell_{i_{\tau(j)-1}}(\zeta_{k,v})$ is the greatest in the  $p_j-1$ cylinder.    The first of these is sent by $A^{p_j}C$ to $\ell_2$, the second by $A^{p_j-1}C$ to $\ell_{\underline{S}}$.    Thus,  $\tau(j+1) = 2$ and $e_{j+1} = RAR^{-1}\cdot y_{\tau(\underline{S})}$.   But,   [\cite{CaltaKraaikampSchmidtContinEntrop}, Lemma~34] gives  $y_1 = RAR^{-1}\cdot y_{\tau(\underline{S})}$.  We thus have  that $e_{j+1} = y_{\tau(j+1)-1}$ in this case as well.

 \smallskip
 \noindent
 {\it Step 2: Correct addition.}   
 Since $T_A\cdot x = x +t$, we have $\mathcal A = [r_0, r'_0)\times [RAR^{-1}\cdot r_{-\overline{S}(v)-1}, RAR^{-1}\cdot y_1]$.    By [\cite{CaltaKraaikampSchmidtContinEntrop}, Lemma~25 (iv) and Lemma~29~(v)], $\mathcal A = [r_0, r'_0)\times [y_{-1}, y_{\underline{S}(k,v)+1}]$.   Thereafter, the fact that $-r_0(\zeta_{k,v})$ is the second lowest height, thus that it equals $y_{-\overline{S}}$,  follows from [\cite{CaltaKraaikampSchmidtContinEntrop}, Lemma~45].  This allows arguments directly analogous to those in Step 1 to succeed.  
\end{proof}

%------------------------------------------------------------------------ 
\begin{Def}\label{d:theMiddle}    Suppose $n>3$.  
The {\em middle matching interval} is 
\[\mathcal M_n = \begin{cases} J_{1, (n-2)/2}&\text{if}\;\; 2|n;\\
                                                  J_{1, (\frac{n-3}{2}, 1, \frac{n-3}{2})}&\text{otherwise.}
                              \end{cases}
\]
\end{Def}
%-----------------------------------------------------------------------

We show that the middle matching interval does contain the midpoint of the parameter interval, and that the mass is constant along this matching interval.   Recall from \S~\ref{ss:Correction} that for odd $n$, \cite{BKS} defined $R_n$ to be the positive root of $x^2- (2-t_{2,n}) x - 1$.   If $n$ is even, let $R_n = 1$. 

\medskip
%------------------------------------------------------------------------ 
\begin{Lem}\label{l:middleContainsOneHalf}    Suppose $n>3$.  
The value $\alpha = 1/2$ belongs to $\mathcal M_n$. Furthermore,    
\[\mathcal M_n =  \bigg[1- \frac{R_n}{t_{2,n}}, 1+\frac{R_n}{t_{2,n}}\bigg).\]
 Moreover, the function   $\alpha \mapsto  \mu(\Omega_{2,n,\alpha})$ is constant on $\mathcal M_n$, of value 
\[ \mu(\Omega_{2,n,\alpha}) = \begin{cases}  2 \ln [\frac{ 1 + \cos \pi/n}{  \sin  \pi/n}]&\text{if}\;\; 2|n;\\
\\
                                                                         2 \ln \bigg(\dfrac{ 1 +R_n}{2 \sin \frac{\pi}{2 n}} \bigg)&\text{otherwise}.
                                                 \end{cases}
\]
\end{Lem}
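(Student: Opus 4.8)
The plan is to verify the three assertions in sequence: first that $\alpha = 1/2$ lies in the named matching interval $\mathcal M_n$, then to compute the endpoints $\zeta$ and $\eta$ of $\mathcal M_n$ explicitly, and finally to read off the constant value of $\mu(\Omega_{2,n,\alpha})$ from the description of $\Omega_{2,n,1/2}$ given in \S~\ref{ss:symmRosen}.

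\smallskip
\noindent
\textbf{Step 1: $1/2 \in \mathcal M_n$.} By Theorem~\ref{t:mIsTwoSymm}, the map $\mathcal S\colon (x,y)\mapsto (-x,-y)$ sends $\Omega_{2,n,\alpha}$ to $\Omega_{2,n,1-\alpha}$, and by the interval-map symmetry of \S~\ref{ss:Symmetry} we have $T_{2,n,1-\alpha}(-x) = -T_{2,n,\alpha}(x)$ off a finite set. Hence $\alpha = 1/2$ is the unique fixed point of $\alpha\mapsto 1-\alpha$, and the matching interval containing it (if any) must itself be symmetric under this involution. To identify which $J_{k,v}$ this is, I would use the explicit prefixes: for $\alpha$ near $1/2$ the right-endpoint expansion begins with digit $k=1$ when $n>3$ (the leftmost cylinder analysis of the ``Initial digits of right endpoints'' subsection shows $r_1(\alpha) = A C\cdot r_0(\alpha)$ is possible only for $\alpha$ with $\zeta_{1,1} < \alpha$, and $\zeta_{1,1}<1/2$). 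The word $v$ is then pinned down by the symmetry: among $v\in\mathcal V$ the palindrome constraint (every $v\in\mathcal V$ is a palindrome, \S~\ref{ss:CKS}) together with the requirement that $J_{1,v}$ be $\mathcal S$-symmetric forces $v = (n-2)/2$ for even $n$ and $v = (\tfrac{n-3}{2},1,\tfrac{n-3}{2})$ for odd $n$; these are exactly the shortest palindromic words consistent with the digit bound $c_i \le n-3$ from Theorem~\ref{t:partition}. A clean way to finish Step 1 is to directly check the synchronization: using $R_{1,v}$ from \eqref{e:rKv} and $L_{1,v}$ from \eqref{eq:defL}, verify via Lemma~\ref{l:shortRightId}(iv) that $T_{2,n,1/2}^{\,\overline S(v)+1}(r_0(1/2)) = T_{2,n,1/2}^{\,\underline S(1,v)+1}(\ell_0(1/2))$, i.e.\ that matching holds at $\alpha=1/2$ with these data.

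\smallskip
\noindent
\textbf{Step 2: the endpoints of $\mathcal M_n$.} Recall $\zeta_{k,v}$ is characterized by $R_{k,v}\cdot r_0(\zeta_{k,v}) = \ell_0(\zeta_{k,v})$ and $\eta_{k,v}$ by $L_{k,v}\cdot r_0(\eta_{k,v}) = r_0(\eta_{k,v})$, where $r_0(\alpha) = \alpha t$ and $\ell_0(\alpha) = (\alpha-1)t$ with $t = t_{2,n}$. I would substitute these, obtaining two algebraic equations in $\alpha$. The fixed-point equation $L_{k,v}\cdot (\eta t) = \eta t$ reduces (after computing the entries of the matrix $L_{1,v}$, which by the Example computations and Lemma~\ref{l:shortRightId} is a short explicit product of $A^{-1}C$'s, $A^{-2}C$'s and $A^{-1}$) to a quadratic whose relevant root is governed by $x^2 - (2 - t)x - 1 = 0$; this is exactly the defining polynomial of $R_n$ (for even $n$ the matching word is short enough that $L_{1,v}$ collapses and one gets $R_n = 1$). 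The upshot should be $\eta_{k,v} = 1 + R_n/t$, and then the left endpoint follows immediately from symmetry: since $\mathcal M_n$ is $\mathcal S$-symmetric about $1/2$, $\zeta_{k,v} = 1 - R_n/t$. (Alternatively one checks $R_{k,v}\cdot(\zeta t) = (\zeta-1)t$ directly yields the same quadratic.) This gives $\mathcal M_n = [\,1 - R_n/t,\ 1 + R_n/t\,)$.

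\smallskip
\noindent
\textbf{Step 3: the constant value.} Lemma~\ref{l:massIncreasesWhere} already tells us $\alpha\mapsto\mu(\Omega_{2,n,\alpha})$ is constant on $\mathcal M_n$ (this is the case $\underline S(k,v) = \overline S(v)$; since $\mathcal M_n$ is $\mathcal S$-symmetric and $\mathcal S$ swaps the roles of $\ell_0$- and $r_0$-orbits, the two word-lengths must coincide). So it suffices to evaluate $\mu(\Omega_{2,n,1/2})$. But in \S~\ref{ss:symmRosen} it is shown that $\Omega_{2,n,1/2} = \Omega_n \cup \mathcal S(\Omega_n)$ where $\Omega_n$ is the Rosen planar domain, and that $\mu(\Omega_n \cup \mathcal S(\Omega_n)) = 2\mu(\Omega_n)$. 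Combining with the corrected mass formula \eqref{e:correctMuMassOddRosen} for odd $n$ and the (correct) \cite{BKS} formula for even $n$ recalled at the end of \S~\ref{ss:Correction}, we get
\[
\mu(\Omega_{2,n,\alpha}) = 2\,\mu(\Omega_n) = \begin{cases} 2\ln\!\big[\tfrac{1+\cos\pi/n}{\sin\pi/n}\big] & 2\mid n,\\[1ex] 2\ln\!\big(\tfrac{1+R_n}{2\sin\frac{\pi}{2n}}\big) & \text{otherwise},\end{cases}
\]
for all $\alpha\in\mathcal M_n$, as claimed.

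\smallskip
\noindent
\textbf{Main obstacle.} The delicate point is Step 2: carrying out the matrix computation of $L_{1,v}$ for the specific palindromic words $v$ and confirming that the fixed-point/matching equation collapses precisely to $x^2 - (2-t_{2,n})x - 1 = 0$ rather than some higher-degree relation. The Example computations in \S~\ref{ss:matchingRel} are encouraging but are done for $n=3$; the general-$n$ case with $v = (\tfrac{n-3}{2},1,\tfrac{n-3}{2})$ requires tracking the telescoping of $(A^{-1}C)^{n-3}$-blocks carefully, analogous to (and using) the telescoping argument flagged in \S~\ref{ss:Correction}. I expect the bookkeeping to be routine once organized via Lemma~\ref{l:shortRightId}(i)--(iv), but it is where the real work lies.
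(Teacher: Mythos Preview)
Your proposal is on the right track and Step~3 matches the paper's argument exactly. The paper, however, avoids the direct matrix computation you flag as the main obstacle in Step~2. Rather than solving the fixed-point/matching equations for $L_{1,v}$ and $R_{1,v}$, the paper observes that \cite{BKS} already identifies $R_n$ as the highest $y$-value of the Rosen planar domain $\Omega_n$; since $\Omega_{2,n,1/2}=\Omega_n\cup\mathcal S(\Omega_n)$, this $R_n$ is also the highest $y$-value of $\Omega_{2,n,1/2}$. But the structural description of $\Omega^+$ in \S\ref{ss:PlanarExtMatch} gives that highest value as $y_{\underline S+1}=-\ell_0(\zeta_{k,v})$, whence $\ell_0(\zeta_{1,v_n})=-R_n$ immediately, and $\zeta_{1,v_n}=1-R_n/t$ follows from $\alpha=1+\ell_0(\alpha)/t$; the other endpoint then comes from the $\alpha\mapsto 1-\alpha$ symmetry. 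So the paper trades your algebraic computation for a geometric read-off from the already-known Rosen extension.

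Your Step~1 also differs: you try to \emph{derive} the word $v_n$ from abstract symmetry and palindrome considerations, but the claim that these ``force'' $v=(n-2)/2$ or $v=(\tfrac{n-3}{2},1,\tfrac{n-3}{2})$ is not justified---there are many palindromic $v\in\mathcal V$, and $\mathcal S$-symmetry of $J_{1,v}$ alone does not single one out. The paper instead takes $v_n$ as given by Definition~\ref{d:theMiddle} and appeals to the explicit \cite{BKS} orbit computations for the Rosen map (together with the $f$-to-$g$ symmetry of \S\ref{ss:symmRosen}) to verify directly that both $r_0(1/2)$ and $\ell_0(1/2)$ are sent to $x=0$ by $R_{1,v_n}$ and $L_{1,v_n}A$ respectively, placing $1/2$ in $J_{1,v_n}$. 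Your route could be made to work, but it would need the orbit verification anyway to close the gap, at which point the symmetry heuristics become redundant.
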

%-----------------------------------------------------------------------
\begin{proof} Recall from that for the  symmetric  Rosen maps, $T_{2,n,1/2}$,  we have $T_{2,n,1/2}(-x) = - T_{2,n,1/2}(x)$.   As in \S~\ref{ss:symmRosen}, this allows us to promote results about the Rosen maps themselves to the $T_{2,n,1/2}$.     For ease let $v_n$ be defined such that $\mathcal M_n = J_{1, v_n}$. 

The paper \cite{BKS} shows that   $R_n$ gives the highest $y$-value of their planar domain for the Rosen map of index $n$.   See \S~\ref{ss:symmRosen}, we then have that this is the highest $y$-value  of $\Omega_{2,n,1/2}$ and also $-1$ times that value gives the lowest $y$-value.  The basic results of \cite{BKS} and symmetry show that the $T_{2,n,1/2}$-orbits of of $r_0(1/2)$ and $\ell_0(1/2)$ are in agreement with both values being sent to $x=0$  upon applications of $R_{1, v_n}$ and $L_{1, v_n}A$, respectively.   This shows that $\alpha = 1/2$ does lie in $\mathcal M_n$, and in fact is the special midpoint value (some author refer to such a value as a `pseudocenter', here it is the center).    Symmetry also shows that there is the same number of entries in each of the $T_{2,n,1/2}$-orbits of $r_0(1/2)$ and $\ell_0(1/2)$ before they meet at $x=0$. 
 
 Since $R_n$ gives the above mentioned highest values,   we have that  $\ell_0(\zeta_{1, v_n}) = -R_n$.    Since for any $\alpha$ one has $\alpha = 1 + \ell_0(\alpha)/t$,  the right endpoint of each $\mathcal M_n$ is as claimed.  Symmetry gives the value of its left endpoint.

Since the $T_{2,n,1/2}$-orbits of $r_0(1/2)$ and $\ell_0(1/2)$  meet at $x=0$ after the same number of steps, we have  $\overline{S}(v_n) = \underline{S}(1, v_n)$.
By Lemma~\ref{l:massIncreasesWhere}, $\mu(\Omega_{2,n,\alpha})$ is hence constant along $\mathcal M_n$ for each $n$.  
As mentioned in \S~\ref{ss:symmRosen}, $\mu(\Omega_{2,n,1/2})$ equals twice the $\mu$-mass of the planar domain for the Rosen map of index $n$. With the correction of \S~\ref{ss:Correction}, we find the claimed expressions for $\mu(\Omega_{2,n,\alpha})$.
\end{proof}

%------------------------------------------------------------------------ 
\begin{Lem}\label{l:massChangeExactlyWhere}    Suppose $n>3$.  
The function   $\alpha \mapsto  \mu(\Omega_{2,n,\alpha})$ decreases on any matching interval $J_{k,v}$ such that $J_{k,v}$ lies to the left of $\mathcal M_n$, is constant on $\mathcal M_n$, and increases on any other matching interval. 
\end{Lem}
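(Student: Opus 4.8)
The plan is to reduce the assertion, via Lemma~\ref{l:massIncreasesWhere} and Lemma~\ref{l:middleContainsOneHalf}, to a purely combinatorial comparison of the integers $\overline{S}(v)$ and $\underline{S}(k,v)$, and then to carry out that comparison by induction over the tree $\mathcal V$. By Lemma~\ref{l:massIncreasesWhere}, on a matching interval $J_{k,v}$ the function $\alpha\mapsto\mu(\Omega_{2,n,\alpha})$ is strictly increasing, constant, or strictly decreasing according as $\overline{S}(v)-\underline{S}(k,v)$ is positive, zero, or negative; and by Lemma~\ref{l:middleContainsOneHalf} this integer vanishes on $\mathcal M_n=J_{1,v_n}$. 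Hence it is enough to prove the single implication that, \emph{if $J_{k,v}$ lies strictly to the left of $\mathcal M_n$, then $\underline{S}(k,v)>\overline{S}(v)$}. Indeed, the involution $\alpha\mapsto1-\alpha$ fixes $\mathcal M_n$, carries the matching intervals to the left of $\mathcal M_n$ bijectively (and orientation-reversingly) onto those to its right, and, since it sends the $T_{2,n,1-\alpha}$-orbit of $\ell_0(1-\alpha)$ onto minus the $T_{2,n,\alpha}$-orbit of $r_0(\alpha)$, interchanges the two quantities, $\underline{S}\leftrightarrow\overline{S}$; moreover by Theorem~\ref{t:mIsTwoSymm} (see \S~\ref{sss:planarMaps}) it is a $\mu$-preserving bijection $\Omega_{2,n,\alpha}\to\Omega_{2,n,1-\alpha}$. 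So the stated implication forces $\overline{S}>\underline{S}$, that is, strict increase, on every matching interval to the right of $\mathcal M_n$, and in particular shows that $\mathcal M_n$ is the only matching interval of constancy.

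For $k\ge2$ the implication is immediate. Every such $J_{k,v}$ lies inside $I_{k,1}$, which lies to the left of $I_{1,1}\supseteq\mathcal M_n$ (as $I_{1,v_n}\subseteq I_{1,1}$ by Theorem~\ref{t:partition}), so these intervals all fall under the hypothesis. Writing $|v|_c=\sum_i c_i$ and $|v|_d=\sum_j d_j$, so that $\overline{S}(v)=|v|_c+|v|_d$, a direct count of the $A^{\bullet}C$-blocks of $L_{k,v}A$ in \eqref{eq:defL}---noting that for $k\ge2$ there is no cancellation at the junctions of the $\tilde{\mathcal C}_k$- and $\tilde{\mathcal C}_{k+1}$-blocks, each of which has length at least $n-3$---gives $\underline{S}(k,v)\ge(n-3)\,\overline{S}(v)+(n-2)>\overline{S}(v)$ for all $n>3$.

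The case $k=1$ is the crux. First I would unwind $L_{1,v}A$ from \eqref{eq:defL}, in which $W^{k-1}=I$, $\tilde{\mathcal C}_1=(A^{-1}C)^{-1}$ and $\tilde{\mathcal C}_2=A^{-2}C(A^{-1}C)^{n-4}$. Using the admissibility bounds $c_i\le n-3$ from Theorem~\ref{t:partition} to see that each negative power $(A^{-1}C)^{-1}$ cancels cleanly against an adjacent run of $A^{-1}C$'s (compare Example~\ref{e:seeingInverseCausesNoProblem}), one obtains a closed expression for $\underline{S}(1,v)$ that is affine in the letter-counts of $v$, and hence a formula $\Phi_n(v):=\overline{S}(v)-\underline{S}(1,v)$ with $\Phi_n(v_n)=0$ (in agreement with Lemma~\ref{l:middleContainsOneHalf}). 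It then remains to show that, among the admissible $v\in\mathcal V$, one has $\Phi_n(v)<0$ exactly when $J_{1,v}$ lies to the left of $J_{1,v_n}$, and $\Phi_n(v)=0$ only for $v=v_n$. I would establish this by induction over the generation of $\mathcal V$ from $v=1$ by the operators $\Theta_q$ of Definition~\ref{d:thetaQ}: each $\Theta_q$ changes the letter-counts, hence $\Phi_n$, by an amount that is read off from Definition~\ref{d:thetaQ} and is monotone in $q$, while by Theorem~\ref{t:partition} the children $I_{1,\Theta_q(v)}$ sit inside $I_{1,v}$ in a definite $q$-ordered way relative to $J_{1,v}$; matching these two monotonicities, and using that $v_n$ lies on the distinguished $\Theta_{-1}$-branch of $\mathcal V$ (followed, for odd $n$, by a single application of $\Theta_0$), carries the sign bookkeeping through.

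I expect the main obstacle to be exactly this $k=1$ accounting. Extracting the clean affine form of $\underline{S}(1,v)$ requires careful control of where the cancellations among the $(A^{-1}C)^{-1}$ terminate, which is precisely what the bounds $c_i\le n-3$ (ultimately a consequence of $U\cdot t=t$ for $U=A^2C(AC)^{n-3}$) supply; and then one must check that the sign of the affine expression $\Phi_n$ stays in lockstep with the somewhat intricate left-to-right ordering of the matching intervals along $\mathcal V$, the step at which the palindromicity of the elements of $\mathcal V$ is needed to exclude zeros of $\Phi_n$ other than at $v_n$.
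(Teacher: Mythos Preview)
Your overall architecture matches the paper's: the reduction via Lemma~\ref{l:massIncreasesWhere} to the sign of $\underline{S}(k,v)-\overline{S}(v)$, the use of the $\alpha\mapsto1-\alpha$ symmetry to treat only intervals to the left of $\mathcal M_n$, and the disposal of $k\ge2$ by a direct length count are all exactly what the paper does.

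The genuine divergence is at $k=1$. You propose to fix $n$ and induct over the tree $\mathcal V$ via the operators $\Theta_q$, tracking the affine quantity $\Phi_n(v)$ and arguing that the $q$-monotonicity of its increments stays aligned with the left-to-right ordering of the children $I_{1,\Theta_q(v)}$ inside $I_{1,v}$. The paper takes a different route: it first writes down the explicit closed form
\[
\underline{S}(1,v)-\overline{S}(v)=(n-3)\Big(1+\sum_j d_j\Big)+1-2\sum_i c_i,
\]
observes that for fixed $v$ this strictly increases with $n$, and then inducts on $n$. The monotonicity in $n$ means that once positivity is known for all relevant $v$ at stage $n$, at stage $n+1$ (resp.\ $n+2$) one need only verify it on the \emph{new} matching intervals lying between $\mathcal M_n$ and $\mathcal M_{n+1}$; these are the descendants of a single known vertex and are dispatched by a short letter-count. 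This $n$-induction buys concreteness and sidesteps having to pin down the full real-line ordering of the $J_{1,v}$ along $\mathcal V$. Your tree-induction scheme should work in principle, but the phrase ``matching these two monotonicities'' hides the real work: one needs the precise ordering of the $I_{1,\Theta_q(v)}$ inside $I_{1,v}$ relative to $J_{1,v}$ (available in \cite{CaltaKraaikampSchmidt} but not restated here), and the recursive definition of $\Theta_q$ on non-root vertices makes the increment in $\Phi_n$ depend on $v'$ and $v''$, not just $q$. Finally, your appeal to palindromicity to isolate $v_n$ as the unique zero of $\Phi_n$ appears to be speculative; the paper's argument makes no use of it.
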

%-----------------------------------------------------------------------
\begin{proof} We already have the constancy of mass function $\alpha \mapsto  \mu(\Omega_{2,n,\alpha})$ along the middle matching interval, for each $n$. 

For any $n$,   the symmetry $T_{2,n,1-\alpha}(-x) = - T_{2,n,\alpha}(x)$ allows one to find that on each matching interval to the right of $\mathcal M_n$ there is a corresponding  matching interval to the left of $\mathcal M_n$, but with a reversal of steps in left- and right-orbits (up to matching).  In particular, if the mass function $\alpha \mapsto  \mu(\Omega_{2,n,\alpha})$ decreases on matching intervals to the left of $\mathcal M_n$ then it increases on those to the right of $\mathcal M_n$.   That is, we need only show the claimed result for matching intervals to the left of $\mathcal M_n$.

For any $n$ and any $v$, from \eqref{e:rKv} we find that  $\overline{S}(v) = \sum_{i=1}^s\, c_i + \sum_{i=1}^{s-1}\, d_i$.   Each appearance $W$ of \eqref{eq:defW}  in $L_{k,v}$ contributes $n-2$ to $\underline{S}(k,v)$.   Thus, each appearance of 
$ \tilde{\mathcal C} = \tilde{\mathcal C}_k = W^{k-1} (A^{-1}C)^{-1}$ contributes $-1 + (k-1)(n-2)$, and each appearance of 
$ \tilde{\mathcal D} = \tilde{\mathcal C}_{k+1}$ contributes $-1 + k(n-2)$.  From \eqref{eq:defL}, 
\[ \underline{S}(k,v) = (n-3) +  [ (k-1) n-2 k+ 1)](-1+  \sum_{i=1}^{s}\, c_i) +  [ kn-2 k  -1] \sum_{i=1}^{s-1} \,d_i + (k-1)(n-2)\,.\]

With $n>3$ and $v$ fixed, this is clearly an increasing function of $k$.  With $k=2$, we have 
$\underline{S}(2,v) =  (n- 3)(\sum_{i=1}^{s}\, c_i) +  [ 2 n-5] \sum_{i=1}^{s-1} \,d_i + (n-2)$.  Since $n>3$,  this is greater than $\overline{S}(v)$.  Thus for all $k\ge 2$ we have that $\underline{S}(k,v)- \overline{S}(v) >0$.   By Lemma~\ref{c:massIncreasesWhere}, we have that the mass function $\alpha \mapsto \mu(\Omega_{2,n,\alpha})$ decreases along any $J_{k,v}$ with $k\ge 2$.

We turn to the setting of $k=1$.  Note that 

\begin{equation}\label{e:diffS}
\underline{S}(1,v) - \overline{S}(v)= (n-3)( 1 +  \sum_{i=1}^{s-1} \,d_i) +  1-  2\,\sum_{i=1}^{s}\, c_i\,.
\end{equation}
 If $s=1$ this becomes $\underline{S}(1,c_1)  - \overline{S}(v)= n-2 -  2 c_1$.
(Recall that  $c_1 <n-2$ when $k=1$.)   Thus,   $\underline{S}(1,c_1)-\overline{S}(c_1) \ge 0$ if and only if $(n-2)/2 \ge c_1$.  Equality holds exactly when $c_1 = (n-2)/2$ and of course this is possible exactly when $n$ is even.

We can now easily show that the case of $n=4$ holds.  Indeed,  here $\mathcal M_4 = J_{1,1}$, which is a leftmost subinterval of $I_{1,1}$.  Hence, any matching interval to the left of $\mathcal M_4$ is contained in the union of the  $I_{1,k}, k\ge 2$.   Therefore,  we do have that the mass function decreases along all matching intervals to the left of $\mathcal M_4$.   

For each $n$, we have that $\underline{S}(1,v) - \overline{S}(v)$  is zero when  $v$ is the defining word of $\mathcal M_n$.   
As well, for fixed $v$, \eqref{e:diffS} shows that $\underline{S}(1,v) - \overline{S}(v)$ strictly increases with $n$.   Thus for $n=5$ we must merely show that $\underline{S}(1,v) - \overline{S}(v)$ is positive between $J_{1,1}$ and $\mathcal M_5 = J_{1,111}$.  But, as \cite{CaltaKraaikampSchmidt} shows, the matching intervals between these two are the $J_{1,1q1}$ with $q \ge 2$ and the $J_{1,v}$ with $v$ running through the descendants of these various $1q1$.   Now, with $n>4$ we have that $\underline{S}(1,1) - \overline{S}(1)$ is positive, and to pass from $v=1$ to $v= 1q1$, we add $q$ to the sum of the $d_j$ and $2$ to that of the $c_j$.  Again from \eqref{e:diffS},  we find $( \underline{S}(1,1q1) - \overline{S}(1q1)) - (\underline{S}(1,1) - \overline{S}(1) ) =  (n-3)q - 4$. Since $n=5$, with $q\ge 2$  this is positive.   In any $v \in \mathcal V$ with $c_1=1$ all of the $c_j=1$.   A consequence of the construction of the tree of words $\mathcal V$, see Definition~\ref{d:thetaQ}, is that upon fixing such a $q$, any descendant of $1q1$  has its $d_j$ all in $\{q, q+1\}$. Since any descendant is formed by appending a string of alternating $d_j$ and $c_{j+1}$,   we see that the difference similar to the above is again positive.   That is, the result also holds for $n=5$.   

Now suppose that the result holds for some odd $n = 2h+3$ with $h\ge 1$ and consider $n' = n+1 = 2(h+2)$.    We aim to  show that, with $n'$ as our indexing value,  $\underline{S}(1,v) - \overline{S}(v) >0$ for all matching intervals between $J_{1,h1h}$  and $J_{1, h+1}$, knowing that $\underline{S}(1,h1h) - \overline{S}(h1h)$ is positive.     These intermediate matching intervals are all of the form $J_{1,v}$ with $v$ a descendant of $h1h$.   

If $h=1$ then as above each such descendant $v$ has all $c_j =1$ and all $d_j \in \{1,2\}$  we pass from $111$ to $v$ by appending an equal number of $c_j$ and $d_j$ values.  The positivity of $\underline{S}(1,v) - \overline{S}(v)$ thus holds.   If $h\ge 2$ then each  descendant has all $d_j =1$ and all $c_j \in \{h,h+1\}$.   Thus, the change in the difference of the right hand side of \eqref{e:diffS} as we add one pair of $d_j, c_{j+1}$ at a time is $(n'-3)d_j - 2 c_{j+1} \ge n'-3 - 2(h+1) = 2(h+2)-3 - 2(h+1) = 0$.  We conclude that the difference $\underline{S}(1,v) - \overline{S}(v)$ is positive for all of these $v$.

We now aim to show the result for $n'' = n+2 = 2(h+1)+3$.   Hence, we must show, with our value $n''$, the positivity along the matching intervals between  $J_{1, h+1}$ and $J_{1,(h+1)1(h+1)}$, knowing that \eqref{e:diffS}  is positive when $v= h+1$.  That is, we must show positivity when $v$ is any descendant of $h+1$ that is not a descendant of $(h+1)1(h+1)$.  The children of $h+1$ are of the form $(h+1)(1 h)^q1 (h+1)$.   Thus for each $q>0$ we add $(q+1)h +1$ in total to the sum of the $c_j$-values, and  $q+1$ to the sum of the $d_j$.    Thus \eqref{e:diffS} changes by $(n''-3)(q+1) - 2 (q+1)h -2=   q-1$.
Any descendant of $v = (h+1)(1 h)^q1 (h+1)$ with $q>0$  can be formed by repeatedly appending pairs of the form $1h$ or $1(h+1)$ to $v$.   Each such step results in the difference changing by   $(n''-3) - 2h= 2$ or by $1$.  Thus, the result holds for $n''$.   

Thus, an induction proof shows the result holds for all $n>3$.
\end{proof}

\subsection{Continuity of $\alpha \mapsto \mu(\Omega_{2,n,\alpha})$}\label{ss:conMuOm}   The table [\cite{CaltaKraaikampSchmidtContinEntrop}, Table~1] also points to  definitions  of  $\Omega_{3,n, \alpha}$ in the cases that $\alpha$ is either endpoint of a matching interval; in short, when compared to $\Omega_{3,n, \alpha}$ with $\alpha$ in the interior of the matching interval,  at the left endpoint there is a `missing' leftmost lower rectangle, and at the right a missing rightmost upper rectangle, see [\cite{CaltaKraaikampSchmidtContinEntrop}, Figures~7, 8].  These definitions are easily adjusted to the case of $m=2$, and  the proofs given in  \cite{CaltaKraaikampSchmidtContinEntrop} succeed. 

The remaining values of $\alpha$ are non-matching values.   The arguments of  [\cite{CaltaKraaikampSchmidtContinEntrop}, \S~6] can be directly adapted to the setting of $m=2$.  There, one has 
that $\Omega_{3,n,\alpha}$  includes a central rectangle  see [\cite{CaltaKraaikampSchmidtContinEntrop}, Figure~9], given by the union of the full cylinders $\Delta_{3,n, \alpha}(k')$   times  a constant $y$-fiber  $\Phi$ equal to the closure of $\mathbb I_{3,n,\alpha}$.  The full $\Omega_{3,n,\alpha}$ is the closure of the union of the $\mathcal T_{3,n,\alpha}$-orbit of a certain $\mathcal Z$ formed by the union of the  central rectangle with the left remaining subinterval times a constant $y$-fiber $\Phi^{-}$   equal to negative one times the closure of the right remaining subinterval  and similarly for a remaining rectangle of $y$-fiber denoted $\Phi^{+}$.      Each non-matching value is the limit of endpoints of matching intervals; for each such endpoint $\alpha'$, there is an analogous $\mathcal Z_{\alpha'}$.   The various $\mathcal Z_{\alpha'}$ nicely converge to $\mathcal Z_{\alpha}$ and one shows that the union of the images under the $\alpha'$-map does indeed converge to the union of the $\mathcal T_{3,n,\alpha}$-orbit of  $\mathcal Z_{\alpha}$.    All of this, in adapted form,  carries over   to show the continuity of $\alpha \mapsto \mu(\Omega_{2,n,\alpha})$ when $0< \alpha <1$.
  
%------------------------------------------------------------------------ 
\begin{Cor}\label{c:massIncreasesWhere}    Fix $n\ge 3$.   The function   $\alpha \mapsto  \mu(\Omega_{2,n,\alpha})$:  decreases on $0< \alpha < 1- \frac{R_n}{t_{2,n}}$; is constant on $\big[1- \frac{R_n}{t_{2,n}}, 1+\frac{R_n}{t_{2,n}}\big)$;  and, increases on  $1+\frac{R_n}{t_{2,n}}\le \alpha <1$.    Its maximal value is as given in Lemma~\ref{l:middleContainsOneHalf}. 
\end{Cor}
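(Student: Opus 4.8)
The plan is to obtain Corollary~\ref{c:massIncreasesWhere} by gluing together the per--matching-interval behavior of Lemma~\ref{l:massChangeExactlyWhere} using the continuity of $\alpha\mapsto\mu(\Omega_{2,n,\alpha})$ on $(0,1)$ just established in \S~\ref{ss:conMuOm}, and then reading off the endpoints and the value attained on the central interval from Lemma~\ref{l:middleContainsOneHalf}. The case $n=3$ requires no new work: by Theorem~\ref{t:mIsTwoGotVol} one has $h(T_{2,3,\alpha})\,\mu(\Omega_{2,3,\alpha})=\mathrm{vol}(T^1(G_{2,3}\backslash\mathbb H))$, so the shape of $\alpha\mapsto\mu(\Omega_{2,3,\alpha})$ is dual to that of $\alpha\mapsto h(T_{2,3,\alpha})$, which is precisely the object studied in \cite{CarminatiIsolaTiozzo}; one only has to match their description of the central plateau with the interval produced by Lemma~\ref{l:middleContainsOneHalf}. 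So assume $n>3$ henceforth.

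First I would assemble the local data. By Theorem~\ref{t:partition} the matching intervals $J_{k,v}$ cover $(0,1)$ up to a set of measure zero, and, up to the countably many endpoints $\zeta_{k,v},\eta_{k,v}$, their union is an open set whose complement $K\subset(0,1)$ is closed, nowhere dense and null. Lemma~\ref{l:massChangeExactlyWhere} says that $\alpha\mapsto\mu(\Omega_{2,n,\alpha})$ strictly decreases on every $J_{k,v}$ lying to the left of $\mathcal M_n$, is constant on $\mathcal M_n$, and strictly increases on every $J_{k,v}$ lying to the right of $\mathcal M_n$; Lemma~\ref{l:middleContainsOneHalf} identifies $\mathcal M_n$ explicitly, shows $\tfrac12\in\mathcal M_n$, and computes the constant value taken there. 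Since the matching intervals to the left of $\mathcal M_n$ are exactly those below its left endpoint and those to the right fill the remaining part of $(0,1)$, all that is left is to promote ``strictly monotone on each such $J_{k,v}$'' to ``strictly monotone on the whole interval'', after which the value on $\mathcal M_n$ given by Lemma~\ref{l:middleContainsOneHalf} is simultaneously the plateau value and --- being approached from the left along a decreasing function and from the right along an increasing one --- the extremal value of $\mu(\Omega_{2,n,\cdot})$ referred to in the statement.

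The gluing is done with continuity and the tree structure of $\mathcal V$. For $\alpha<\alpha'$ both lying to the left of $\mathcal M_n$, density of the matching intervals lets one exhaust $(\alpha,\alpha')$ by countably many $J_{k,v}$'s, on each of which $\mu(\Omega_{2,n,\cdot})$ strictly decreases, with the leftover parameters lying in the null, nowhere-dense set $K$; the half-open gaps at the $\zeta_{k,v},\eta_{k,v}$ are absorbed using the nesting $I_{k,v}=J_{k,v}\cup\bigcup_{q}I_{k,\Theta_q(v)}$ of Theorem~\ref{t:partition}, so that every point of $K$ is a two-sided limit of matching-interval endpoints. Because $\mu(\Omega_{2,n,\cdot})$ is continuous on $(0,1)$, its value at each such accumulation point equals the limit of its values at those endpoints, so the value at the right endpoint of one matching interval agrees with that at the left endpoint of the next one in the natural order; chaining the strict decreases across these matchings yields $\mu(\Omega_{2,n,\alpha})>\mu(\Omega_{2,n,\alpha'})$. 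The analogous statement on the other side of $\mathcal M_n$ then follows either by the same argument or at once from the symmetry $\mathcal S\colon(x,y)\mapsto(-x,-y)$ of \S~\ref{ss:Symmetry} together with Theorem~\ref{t:mIsTwoSymm}, which sends $\Omega_{2,n,\alpha}$ to $\Omega_{2,n,1-\alpha}$ and hence makes $\mu(\Omega_{2,n,\alpha})$ invariant under $\alpha\mapsto1-\alpha$.

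I expect the gluing step to be the real obstacle. The danger is genuine: a continuous function can be locally constant on an open set of full measure and still increase globally (the Cantor function), so neither continuity nor monotonicity almost everywhere suffices by itself. What makes the argument go through here is the conjunction of three features supplied by the earlier sections --- the monotonicity on each matching interval is \emph{strict} and of a sign fixed once and for all by Lemma~\ref{l:massChangeExactlyWhere}; the exceptional set $K$ is not merely null but is governed by the word tree $\mathcal V$, so that it is essentially totally disconnected and every one of its points is a two-sided accumulation of matching-interval endpoints; and $\mu(\Omega_{2,n,\cdot})$ is continuous there. Keeping careful track of the half-open endpoints and of the order type of the matching intervals inside a given gap is where the bookkeeping must be done carefully; should one prefer to avoid it, the monotonicity structure for $n=3$ proved in \cite{CarminatiIsolaTiozzo} can instead be combined with the family-wide comparisons of the earlier subsections, but the self-contained route above is cleaner.
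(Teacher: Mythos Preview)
Your approach is exactly that of the paper: combine the per--matching-interval monotonicity of Lemma~\ref{l:massChangeExactlyWhere} with the identification of $\mathcal M_n$ in Lemma~\ref{l:middleContainsOneHalf} and the continuity of $\alpha\mapsto\mu(\Omega_{2,n,\alpha})$ from \S\ref{ss:conMuOm}; the paper's own proof is a single sentence invoking precisely these three ingredients, and, like you, it defers $n=3$ to \cite{CarminatiIsolaTiozzo} at the start of \S\ref{ss:incrDecr}. Your careful discussion of the gluing step and the Cantor-function obstruction goes well beyond what the paper actually writes down---the paper simply asserts that continuity plus the lemmas suffice---so your caution is commendable rather than misplaced, but the route is the same.
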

%-----------------------------------------------------------------------
\begin{proof} Since the mass function is continuous on $(0,1)$, the result follows from Lemmas~\ref{l:middleContainsOneHalf} and \ref{l:massChangeExactlyWhere}. 
\end{proof}

Since the maximal value of $\mu(\Omega_{2,n,\alpha})$  goes to zero as $n \to \infty$, we have completed the proof of Theorem~\ref{t:mIsTwoSimpleEntropyBehavior}.

\subsection{Planar extensions   for $\alpha = 0, 1$}\label{ss:omZeroOne}   We also give planar extensions for the endpoints of the parameter intervals.

\subsubsection{Planar extensions $\Omega_{2,n, \alpha}$ for  $\alpha = 0$}\label{sss:PlanarForZero}   For each $n\ge 3$, the domain $\Omega_{3,n, 0}$ is given in 
 \cite{CaltaSchmidt}.    We now sketch the  adjustment to the setting of $m=2$.   
 
 Fix $m=2$ and  $n \ge 3$,  and set $t = t_n = t_{2,n} = 2 \cos(\pi/n)$.  Recycle notation and let 
$g(x) = k t - 1/x$, with $k$ the unique integer such that $g(x) \in [-t, 0)$.   Now,  $-t \le k t - 1/x < 0$ implies $-(k+1) t \le -1/x < -k t$  (and since $x<0$ we have $-1/x >0$ so $k <0$).     That is,  $\frac{1}{(k+1) t} \ge x > \frac{1}{k t}$.  We find that the cylinders corresponding to $-k,  k\ge 2$ are all full, whereas (if $n>3$) the cylinder of $k = -1$ is sent to $[- t +1/t, 0)$.      One finds that the orbit of $x=-t$ is periodic of length $n-2$ and corresponds to following the admissible word  $W = A^{-2} C (A^{-1} C)^{n - 3}$.   The reader may be reassured by the facts that both  [\cite{CaltaKraaikampSchmidt}, equation (2.1)] holds when $m=2$, and (projectively) $W = \begin{pmatrix}1+ t^2&t^3\\-t& 1-t^2\end{pmatrix}$  as in  [\cite{CaltaSchmidt},  equation (3.1)]. 

It is easily verified that $\mathcal T_{2, n=3, 0}$ is bijective up to null sets on $\Omega_{2, 3, 0} := [-1,0]\times [0,1]$.  We thus assume $n>3$ in what follows.    Let $\ell_i = g^i(-t)$ for $0\le i \le n-3$. We have $-t = \ell_0< \ell_1 < \dots <\ell_{n-3}$ and $\{\ell_0, \dots, \ell_{n-4}\} \subset \Delta(-1)$ while $\ell_{n-3} \in \Delta(-2)$, where we use simplified notation for cylinders.   Since $A^{-2} C \cdot \ell_{n-3} = -t$, in fact $\ell_{n-3} = C\cdot t = -1/t$. 

Now let $y_1 = 1/t$, a value fixed by $RWR^{-1}$.  Further, let $y_i = RA^{-1}\cdot y_{i-1}$ for $1 \le i \le n-2$.    We find that $RA^{-2}\cdot y_{n-2} = y_1$.  Thus, $y_{n-2} = t$.    Let 

\[\Omega = \Omega_{2, n, 0}  =  ([\ell_{n-3}, 0]\times [0,t]) \cup \bigcup_{i=0}^{n-4} ([\ell_i, \ell_{i+1}) \times [0, y_{i+1}])\,,\]
see Figure~\ref{f:Omega2FiveZero}  for the case of $n=5$.

We have that  $\mathcal S': (x,y) \mapsto (-y,-x)$ sends the pair $(\ell_{n-3}, y_{n-2})$ to $(\ell_0, y_1)$.    It is straightforward to check that for any invertible $M = \begin{pmatrix} a, &b\\-b &d\end{pmatrix}$ that  $\mathcal T_{M}( \mathcal S'(x,y)) = \mathcal S' \circ  \mathcal T_{M^{-1}}(x,y)$ for any $(x,y)$.    Since   $(\ell_1, y_2) = \mathcal T_{A^{-1}C}(\ell_0, y_1)$ and $(\ell_{n-4},  y_{n-3})  = \mathcal T_{(A^{-1}R)^{-1}} (\ell_{n-3}, y_{n-2})$, by recursion we find that the vertices of $\Omega$ are preserved by $\mathcal S'$.   That is,  $\Omega$ is preserved by $\mathcal S'$.   Notice that when $n$ is odd we have  $(\ell_{(n-3)/2}, y_{(n-1)/2}) = (-1, 1)$.    Since there are vertices lying along the curve $y = -1/x$,  one sees that $\mu(\Omega)$ is infinite. 

\subsubsection{Planar extensions for accelerated $T_{2,n, 0}$}\label{sss:PlanarForAcc} 
Similar to \cite{CaltaSchmidt}, we now accelerate $g=T_{2,n, 0}$ by inducing past the cylinder of $W$.  We treat only the case of $n>3$ and hence $t>1$, the remaining case is readily handled.  Let $\epsilon_0 = W^{-1}\cdot 0 = t^3/(1-t^2)$ and define $f: [-t,0) \to [-t,0)$ by $f(x) = g(x)$ when $x>\epsilon_0$ and for $-t<x\le \epsilon_0$ let $f(x) = W^{j(x)}\cdot x$ where $j(x)$ is minimal such that $W^{j(x)}\cdot x>\epsilon_0\,$.    

Let $\mathcal F$ be the planar map associated,  in our usual manner,  to $f$.
We give a bijectivity domain for  $\mathcal F$.   For this, let $\epsilon_i$ with $1 \le i \le n-3$ be the initial $g$-orbit of $\epsilon_0$, that is, $\epsilon_i = (A^{-1}C)^i\cdot \epsilon_0$.  Since $W \cdot \epsilon_0 = 0$, we have that $\epsilon_{n-3}$ is the right endpoint of the cylinder $\Delta(-2)$.   Also,  $\ell_i < \epsilon_i<\ell_{i+1}$ for $0 \le i \le n-4$ and $\ell_{n-3}<\epsilon_{n-3}$; recall that $\ell_{n-3}$ is sent to $-t$ and hence it is the left endpoint of $\Delta(-2)$.    Let $z_1 = RWR^{-1}\cdot 0$.    Note that $\mathcal T_{W^{k+1}}$ sends the rectangle $(W^{-k-1}\cdot \epsilon_0, W^{-k}\cdot \epsilon_0)\times [0, z_1]$ to lie directly above the $\mathcal T_{W^{k}}$-image of 
$(W^{-k}\cdot \epsilon_0, W^{-k-1}\cdot \epsilon_0)$.    Since $y_1$ is a fixed point of $RWR^{-1}$ one finds the union over  all $k$ of these images is the rectangle $[\epsilon_0, 0]\times [z_1, y_1]$.  With this as a guide, we now let 
\[\Omega = \big([\ell_0, \epsilon_0)\times [0, z_1]\big) \cup \big([\epsilon_{n-3}, 0) \times [0, t] \big)  \cup \bigcup_{i=0}^{n-4}  \big([\epsilon_i, \epsilon_{i+1}) \times [0, y_i] \big),  \]
see Figure~\ref{f:Omega2FiveZero}  for the case of $n=5$. 

One finds that $\Omega = \Omega_{2,n,0} \setminus \big(\mathcal D_1 \cup \mathcal D_2\big)$ where 
\begin{equation}\label{e:deletionRects} \mathcal D_1 = \bigcup_{i=0}^{n-3}\,\mathcal T_{2,n,0}^{i}\big([\ell_0, \epsilon_0)\times [z_1, y_1]\big)\;\text{and}\; \mathcal D_2 = \bigcup_{i=1}^{n-3}\,\mathcal T_{2,n,0}^{i}\big([\ell_0, \epsilon_0]\times [0, z_1]\big). 
\end{equation}
Given that $\mathcal F$ maps $[\ell_0, \epsilon_0)\times [0, z_1]$ bijectively to $[\epsilon_0, 0]\times [z_1, y_1]$ and the bijectivity of $\mathcal T_{2,n,0}$ on $\Omega_{2,n,0}$,  the bijectivity of $\mathcal F$  on $\Omega$ will follow as soon as we show that $RA^{-2}CR^{-1}$ maps $y_{n-3}$ to $z_1$.    Given the matrix form of $W$, we find $z_1 = R W \cdot \infty = -1/((1+t^2)/-t) = t/(1+t^2)$.  On the other hand,  $y_{n-3} = (RA^{-1}C R^{-1})^{-1}\cdot t$ and thus $RA^{-2}CR^{-1}\cdot y_{n-3} = RA^{-1} R^{-1}\cdot t = -1/((-t-1/t))$.   The equality holds and hence $\Omega$ is a domain of bijectivity for $\mathcal F$.

%-----------------------------Figure Planar Ext  for (m,n,alpha) = (2,5,0) -------------------------------
% constants for figure for (m,n,alpha) = (2,5,0)
\def\bigG{1.618}%
\def\litG{0.618034}%already defined
\def\lGSqrd{0.381966}%already defined
\def\epZero{-1.171}%W^{-1}\cdot 0 
\def\epOne{-0.764}%A^{-1}C.ep0 
\def\epTwo{-0.309}%(A^{-1}C)^2.ep0 
\def\zOne{0.447}%RWR^{-1}.0
\def\zTwo{0.854}%RA^{-1}CR^{-1}.zOne
\def\zThree{1.31}%RA^{-1}CR^{-1}.zTwo
%-----------------------------Figure itself -------------------------------
\begin{figure}[h]
\scalebox{.5}{
\begin{tikzpicture}[scale=8]
%outline of Omega
\draw[fill=black!20] (-\bigG,0)--(-\bigG,\litG)--(-1,\litG)--(-1,1)--(-\litG,1)--(-\litG, \bigG)--(0, \bigG)--(-0,0) -- cycle;
%   First block to excise 
\draw[pattern=north west lines]  (-\bigG,\zOne)--(-\bigG,\litG)--(\epZero,\litG)--(\epZero,\zOne) -- cycle;
%   2nd block to excise 
%      using   R.A^{-1}CR \cdot 0=  g  
\draw[pattern=north west lines]  (-1,\zTwo)--(-1,1)--(\epOne,1)--(\epOne,\zTwo) -- cycle;
%   Extra block to excise R.(A^{-1}C)^2R-image  of Omega \cap [ell_0, ep_0)
\draw[pattern=north east lines]  (-1,\litG)--(-1,\zTwo)--(\epOne,\zTwo)--(\epOne,\litG) -- cycle;
%   3rd block to excise 
%      Using R.(A^{-1}C)^2R \cdot 0= (\epsilon2, 1) 
\draw[pattern=north west lines]  (-\litG,\zThree)--(-\litG,\bigG)--(\epTwo,\bigG)--(\epTwo,\zThree) -- cycle;
%   Extra block to excise R.(A^{-2}C)^2R-image meets first excised block
%      Using R.(A^{-1}C)^2R \cdot 0= (\epsilon2, 1) 
\draw[pattern=north east lines]  (-\litG,\zThree)--(-\litG,1)--(\epTwo,1)--(\epTwo,\zThree) -- cycle;
%   mark block -2 
\node at (-0.45,0.6) {$\mathcal B_{-2}$};
\draw[dashed] (-\litG, 0)--(-\litG,1); 
\draw[dashed] (\epTwo, 0)--(\epTwo, \bigG); % the right endpoint of Delta(-2) equals epTwo
%   mark block -3 
\node at (-0.22,0.6) {$\mathcal B_{-3}$};
\draw[dashed] (-0.146, 0)--(-0.146, \bigG); 
% Dots for remaining blocks
\node at (-0.06,0.6) {\large{$\dots$}};
%  Label  main vertices
\node[below] at (-\bigG, 0) {$(\ell_0,0) = (-t, 0)$};
\node[below] at (\epZero, 0) {$(\epsilon_0,0)$};
\node[below] at (\epTwo, 0) {$(\epsilon_2,0)$};
\node[left] at (-\bigG, \zOne) {$(\ell_0,z_1)$};
\node[left] at (-\bigG, \litG) {$(\ell_0,y_1)$};
\node[left] at (-\litG,\zThree) {$(\ell_2, z_3)$};
\node[above] at (-\litG,\bigG) {$(\ell_2, y_3)$};
\node[above, left] at (-1,\zTwo) {$(\ell_1, z_2)$};
\node[above, left] at (-1,1) {$(\ell_1, y_2)$};
\node[above] at (\epOne, 1) {$(\epsilon_1, y_2)$};
\node[above] at (0, \bigG) {$(0, y_3)= (0, t)$};
\node[below] at (0, 0) {$(0, 0)$};
 %          Place dot at vertices to be labeled
\foreach \x/\y in  {-\bigG/0, -\bigG/\zOne,-\bigG/\litG, -1/\zTwo,-1/1,  -\litG/\zThree,-\litG/\bigG, 0/\bigG,0/0, \epZero/0, \epOne/1,\epTwo/0}  
 \node at (\x,\y) {$\bullet$}; 
\end{tikzpicture}
}
\caption{The domain $\Omega_{2,5,0}$ is the union of the gray and the hatched regions.   These latter are excised to leave the planar domain of the two-dimensional system defined by the interval map given by  by accelerating  $T_{2,5,0}$ by applying appropriate powers of $W$ for $x$ in $(\ell_0, \epsilon_0) = (-t_{2,5}, \epsilon_0)$, where $\epsilon_0 = W^{-1}\cdot 0$. The negative sloped hatched rectangles form $\mathcal D_1$ of \eqref{e:deletionRects}, the  positive sloped hatched rectangles form $\mathcal D_2$.}
\label{f:Omega2FiveZero}
\end{figure}
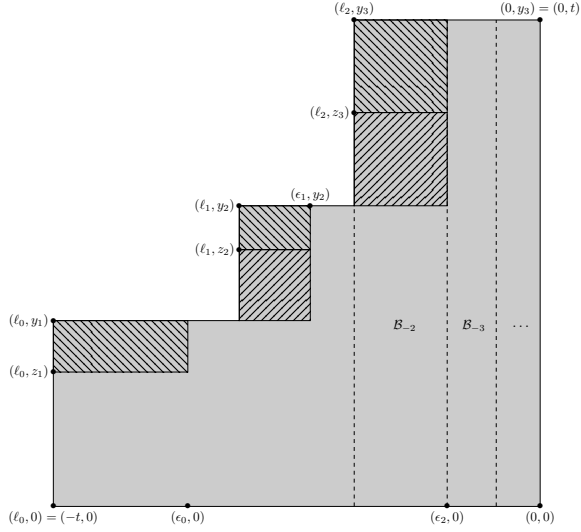
%----------------------------------------------------------------------- 
 
\subsubsection{Planar extension for $(m,n,\alpha) = (2,n,1)$}    Applying the map $\mathcal S$ to $\Omega_{2, n, 0}$ results in $\Omega_{2, n, 1}$ and thus one can verify the following.

In  [\cite{CaltaKraaikampSchmidt}, Section~3]  one finds a discussion of $\Omega_{3,n, 1}$ for $n\ge 3$. 
For our setting of $m=2$, we must check some of that argument,  since here $\mu = 0$, but $1/\mu$ appears in that earlier discussion.  Recycling notation, we have $\mathbb I = [0,t)$ and  the non-full cylinder $\Delta(1) = [1/t, t)$ whose image under $f$ is $A C\cdot  [1/t,t) = t + [-t, -1/t) = [0, t-1/t) = [0, (t^2-1)/t)$.    The other cylinders are full; on the cylinder index by $k>1$,  $T$ takes the form $x \mapsto k t -1/x$.

The argument of [\cite{CaltaKraaikampSchmidt}, Section~3] goes through to show that $A^2 C (A C)^{n-3} = A(A C)^{n-2} = A (C A^{-1})^2 = A C A^{-1} C A^{-1}$ fixes $t$ and the orbit of $t$ is given by $t \mapsto A C \cdot t \mapsto (A C)^2\cdot t \mapsto\cdots \mapsto (A C)^{n-3}\cdot t$, followed by  $A^2 C\cdot (\, (A C)^{n-3}\cdot t\,) = t$.    We thus let $s_i = (A C)^i\cdot t, 0\le i \le n-3$ and set   

\begin{equation}\label{e:om2N0}
\Omega  = \bigcup_{i=1}^{n-2}   \,( [s_i, s_{i-1}]\times [-1/s_{i-1}, 0]) = ([0, 1/t)\times [-t,0])\cup \bigcup_{i=1}^{n-3}  \,( [s_i, s_{i-1}]\times [-1/s_{i-1}, 0])\,.
\end{equation}

For $k>1$, by [\cite{CaltaKraaikampSchmidtPfsErgodicity}, Proposition~12], the two dimensional $\mathcal T$ maps the  rectangle  $\Delta(k)\times [-t, 0]$ below the image of $\Delta(k+1)\times [-t, 0]$ so that they share a horizontal line. Since $R\, A^k C\, R\cdot y = R A^k \cdot y = -1/( k t + y)$ we find that the union of these images, with $2 \le k < \infty$ is $\mathbb I \times  [-1/t, 0)$.   One easily verifies that $\mathcal T_{A C}$ sends $[s_i, s_{i-1}]\times [-1/s_{i-1}, 0]$ to  $[s_{i+1}, s_i]\times [-1/s_i, -1/t]$ for $i<n-2$.    Hence,   $\Omega$ is a bijectivity domain for $\mathcal T$ up to null sets.    We thus hereafter denote it by $\Omega_{2,n,1}$.

Note that $\Omega_{2,3,1}$ is the square $[0,1] \times [-1,0]$.   
 
\newpage % Currently have a bad page break, expect to remove this command. 

\subsection{Continuity of entropy, constancy of `Rohlin integrals'}\label{ss:conEntConRoh} 

\subsubsection{Agreement of first return maps}   Using `word processing' we show  that for each $0 \le \alpha<1$ that upon  an the appropriate subset of negative $x \in \mathbb I_{\alpha}$  first returns under $T_{\alpha}$ are given by  compositional powers of $T_0$.   Compare the following with [\cite{CaltaKraaikampSchmidtPfsErgodicity}, Lemma~142]. 

%-------------------------------------------------------------------------------------------
\begin{Lem}\label{t:wordPro}  Fix $m=2$ and $n\ge 3$.   For each $\alpha \in [0,1)$ and suppose $x \in \mathbb I_{\alpha}$ is negative and that there exists $m\ge 0$ such that both $T_{\alpha}^{m}(x)<0$ and $T_{\alpha}^{m+1}(x)<0$.   Then there is some $k\in \mathbb N$ such that $T_{0}^{k}(x) =  T_{\alpha}^{m+1}(x)$.    Furthermore,  for any $y$ one has $\mathcal T_{0}^{k}(x,y) =  \mathcal T_{\alpha}^{m+1}(x,y)$. 
 \end{Lem}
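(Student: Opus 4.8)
The plan is to reduce the statement to an orbit-combinatorial fact about digit strings, namely that for a negative $x$ the first-return block of $T_\alpha$ back to the negative half-interval is, as a word in the generators $A, C$, exactly a power of the $\alpha=0$ return word $W = A^{-2}C(A^{-1}C)^{n-3}$ together with the appropriate ``$g$-digits'' in between. First I would recall from \S\ref{ss:omZeroOne} that $T_{2,n,0}$, restricted to the negative interval $[-t,0)$, coincides with the map $g(x) = kt - 1/x$ where $k<0$ is chosen so that $g(x)\in[-t,0)$, and that the cylinders of $g$ correspond precisely to the matrices $A^kC$ with $k\le -1$; moreover $W\cdot(-t)=-t$ and the $g$-orbit of $-t$ cycles through the $A^{-1}C$-cylinder. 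The key observation is that when $x<0$ and $T_\alpha^m(x)<0$, the digit $T_\alpha$ assigns at that step is forced: the only way $A^kC\cdot z$ can be negative for $z<0$ in $\mathbb I_\alpha$ is for $k\le -1$, and the choice of $k$ is determined by the floor in \eqref{e:maps}, which by the basic identity $\lfloor -x\rfloor = -1-\lfloor x\rfloor$ used in \S\ref{ss:Symmetry} agrees (for negative arguments landing in the negative part of $\mathbb I_\alpha$) with the choice $g$ makes. One must also rule out the exceptional boundary case where $T_\alpha$ would land exactly at $\ell_0(\alpha)$; but the hypothesis $T_\alpha^{m+1}(x)<0$ handles exactly this, and $\ell_0(0)=-t$ is the periodic point of $W$, which is the only place the accelerated bookkeeping can differ.

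Next I would carry out the induction on $m$. For $m=0$: both $x<0$ and $T_\alpha(x)<0$, so $T_\alpha(x) = A^{j}C\cdot x$ with $j\le -1$ by the sign argument, and since the landing point is negative and not $\ell_0(\alpha)$ the integer $j$ equals the $g$-digit of $x$; hence $T_\alpha(x) = g(x) = T_0(x)$, giving $k=1$. For the inductive step, suppose the claim holds up to $m-1$. Apply the base-case reasoning to $x$ to get $T_\alpha(x) = T_0(x) =: x_1 < 0$ (this is where I use $T_\alpha(x)<0$, which follows since $T_\alpha^m(x)$ and $T_\alpha^{m+1}(x)$ are negative and the intermediate iterate $T_\alpha^1(x)$ must itself be negative by the structure of the negative part of $\mathbb I_\alpha$ — I should check this carefully, using that on the negative interval the $T_\alpha$-dynamics is monotone-branch by monotone-branch exactly as for $g$). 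Then $T_\alpha^{m}(x) = T_\alpha^{m-1}(x_1)$ and $T_\alpha^{m+1}(x) = T_\alpha^{m}(x_1)$ are both negative, so the inductive hypothesis applied to $x_1$ yields $k'\in\mathbb N$ with $T_0^{k'}(x_1) = T_\alpha^{m}(x_1) = T_\alpha^{m+1}(x)$; take $k = k'+1$, so $T_0^{k}(x) = T_0^{k'}(x_1) = T_\alpha^{m+1}(x)$, as desired.

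Finally, for the planar statement: both $\mathcal T_0$ and $\mathcal T_\alpha$ are, on each cylinder, given by the same fixed formula $\mathcal T_M(x,y) = (M\cdot x, RMR^{-1}\cdot y)$ with the \emph{same} matrix $M = A^jC$ whenever the interval maps use the same branch — and the orbit-equality just proved shows that the sequence of matrices $M$ used by $T_0$ over its first $k$ steps on $x$ is literally the concatenation of the sequences used by $T_\alpha$ over its first $m+1$ steps (this is the word-processing content: equal digit strings give equal matrix products). Since the second coordinate of $\mathcal T$ is transported by exactly that same matrix product conjugated by $R$, and since the $y$-coordinate map depends only on the matrix and not on $\alpha$, we get $\mathcal T_0^{k}(x,y) = \mathcal T_\alpha^{m+1}(x,y)$ for every $y$ for which the orbit is defined. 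The main obstacle I expect is verifying precisely that under the stated hypotheses every intermediate iterate $T_\alpha^i(x)$, $0\le i\le m$, is negative and avoids the single bad point $\ell_0(\alpha)$, so that the digit choice of $T_\alpha$ genuinely agrees with that of $g=T_0$ at each step; once that is nailed down, everything else is a clean induction plus the observation that the two-dimensional maps are determined branchwise by the same matrices.
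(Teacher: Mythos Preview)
There is a genuine gap in your induction step. You assert that $T_\alpha^1(x)<0$ ``by the structure of the negative part of $\mathbb I_\alpha$'', and you flag this as something to check carefully --- but in fact it is false in general, and this is precisely why the lemma requires a nontrivial argument. For $\alpha>0$ the interval $\mathbb I_\alpha$ has a positive part $[0,\alpha t)$, and the $T_\alpha$-orbit of a negative $x$ can (and typically does) enter this positive part before returning to the negative side. Indeed, if one takes $m$ minimal as in the paper's proof and $m>0$, then since $x=T_\alpha^0(x)<0$, minimality forces $T_\alpha^1(x)\ge 0$; otherwise $m'=0$ would already satisfy the hypotheses. So your base-case identification $T_\alpha(x)=T_0(x)$ is simply unavailable at the start of the induction. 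During the positive excursion the $T_\alpha$-digits $a_i$ are \emph{positive}, and no $T_0$-step corresponds to any single such $A^{a_i}C$. Consequently the two digit strings are not literally equal, contrary to what your final paragraph assumes.

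The paper's approach is to work instead at the level of group elements: one writes $T_\alpha^{m+1}(x)=A^{a_{m+1}}C\cdots A^{a_1}C\cdot x$ with $a_1,a_{m+1}<0$ but possibly $a_i>0$ in between, and then uses the identity $A^kCA = CA\,W^k$ (Lemma~\ref{l:shortRightId}(i)) and related relations to rewrite this product as a $T_0$-admissible word --- that is, a product of factors $A^bC$ with $b\le -1$. A short list of substitution rules processes the word from right to left and terminates; the resulting word gives the $T_0$-orbit segment. The planar statement then follows because both $\mathcal T_0^k$ and $\mathcal T_\alpha^{m+1}$ act on the $y$-coordinate by $R(\cdot)R^{-1}$ applied to the \emph{same} group element, even though the two factorizations into one-step pieces differ. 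Your approach would only work if the orbit never left the negative half-interval, which is a much stronger hypothesis than what is assumed.
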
 
%-------------------------------------------------------------------------------------------
\begin{proof}   With minor adjustments, the proof of [\cite{CaltaKraaikampSchmidtPfsErgodicity}, Lemma~142] succeeds in our setting, as we now show.

 We can and do assume that $m$ is minimal with respect to the hypotheses.    From the definition of our maps, we can write
\[ T_{\alpha}^{m+1}(x) = A^{a_{m+1}}C A^{a_{m}}C \cdots A^{a_1}C \cdot x,\]
with both $a_{m+1}, a_1$ negative.   If $m=0$, then $A^{a_1}C^{c_1}\cdot x = T_0(x)$ and $k=1$.   We now assume that $m>0$ and  
 perform word processing on $A^{a_{m+1}}C A^{a_{m}}C \cdots A^{a_1}C$ with the goal to achieve an expression for this element of the group $G_n$ that is in an admissible form for $T_0$.  
 
 We process from right to left,  using the following substitution rules.  We start with the augmented word  `$A^{a_{m+1}}C A^{a_{m}}C \cdots A^{a_1}C$;'  and   use the substitutions:  

\[
 \begin{aligned} 
(i)&\;\;\;\;\;\;\;\;\;   A^{a_1} C;                       &\mapsto &\;\;\;\;\;  A;A^{a_1-1}C \\
(ii)&\;\;\;\;\;\;\;\;\;   A^a C A; \;               &\mapsto &\;\;\;\;\; A C A; W^{a-1} \\  
(iii)&\;\;\;\;\;\;\;\;\;  A^a C  A C A; \;                   &\mapsto &\;\;\;\;\; \begin{cases}  A C A; W^{a-2}A^{-2}C(A^{-1}C)^{n-4}&\text{if}\;a\ge 2, n\ge 4;\\
                                                                                                                        A C A; W^{a-2}A^{-1}&\text{if}\;a\ge 2, n=3;\\
                                                                                                                        ;A^{-2}C(A^{-1}C)^{n-4}&\text{if}\;a = 1. 
                                                                                                                        \end{cases}&\\  
(iv)&\;\;\;\;\;\;\;\;\;   \; A^a C;  A^b C\; &\mapsto &\;\;\;\;\; A C A; W^{a-1} A^{b-1}C \;\phantom{please align,th}\text{if} \;a >0\\    
(v)&\;\;\;\;\;\;\;\;\;     \;  A^{a_{m+1}} C;\;     &\mapsto &\;\;\;\;\; ;A^{a_{m+1}} C \\    
(vi)&\;\;\;\;\;\;\;\;\;     \;  A^{a_{m+1}} C A C A ;\;     &\mapsto &\;\;\;\;\; ;A^{a_{m+1}-1} C (A^{-1}C)^{n-3}\,. \\     
  \end{aligned}
  \]
  Each rule, up to ignoring the  {\em marker} ``;", is the result of applying a group identity. Mainly, it is a version of $A^kCA = CA W^k$ from  Lemma~\ref{l:shortRightId}.    After a rule is applied, we multiply on the left by the next unprocessed $A^{a_i}C$,  and then determine which rule is then to be applied.  (In the `middle case' of $(iii)$, we also first multiply the $A^{-1}$ with the term to its right.) The processing halts upon applying (v) or (vi).   One easily verifies that processing does halt, confer Figure~\ref{f:wordProAuto}.  The result is an admissible $T_0$-word, which acts so as to send $x$ to $T_{\alpha}^{m+1}\cdot x$. 
 
%------------------------------------------------------------------------------------------  
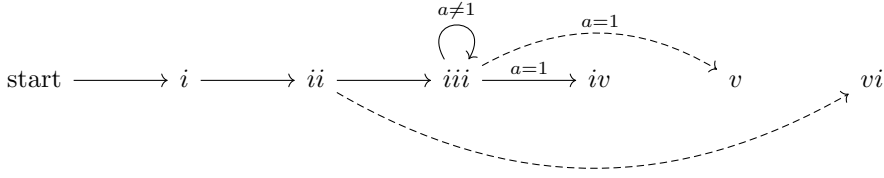
\begin{figure}
\begin{tikzcd}[column sep=3pc,row sep=2pc]
\text{start}  \ar{r}& i \ar{r} &ii \ar{r}\ar[dashed, bend right]{rrrr}&iii \ar{r}{a=1}\ar[dashed, bend left]{rr}{a=1}\arrow[loop, out=120, in=60, distance = 2em]{}{a\neq1}&iv &v&vi\\
\end{tikzcd}
\caption{Sequence of word processing relations to pass from $T_{\alpha}$-admissible word to $T_0$-admissible word,  used in proof of  Lemma~\ref{t:wordPro}. 
Processing halts when the demarcation symbol is to the left of the word.}
\label{f:wordProAuto}
\end{figure}
%------------------------------------------------------------------------------------------ 
\end{proof}

 We follow the next step in the arguments of \cite{CaltaKraaikampSchmidtPfsErgodicity},   and seek a region for which the {\em first} return under $\mathcal T_0$ agrees with that of $\mathcal T_{\alpha}$.  To this end, we restrict the region to which we return, so that $\{-1,-2\}$ digits introduced in the word processing could never index  an `early return' of a $\mathcal T_0$-orbit. 
 
%------------------------------------------------------------------------ 
\begin{Lem}\label{l:firstReturnTo2ndQuadrant}  Suppose that $0<  \alpha< 1$.  Let $d = \min\{-4, -2 + d_{\alpha}(\ell_0(\alpha))\}$,  $N_d = \bigcup_{i \le d}\, \Delta_{\alpha}(i)$  and  $\Omega^{+}_{\alpha, d} =   \Omega_{\alpha} \cap \{(x,y) \mid y>0, x<0,  x \in N_d\; \text{and}\; x' \in N_d, {\text where}\; \mathcal T_{\alpha}^{-1}(x,y) = (x',y')\,\}$.   
  The first return maps of  $\mathcal T_0$ and of $\mathcal T_{\alpha}$ to $\Omega^{+}_{\alpha, d}$ agree. 
\end{Lem}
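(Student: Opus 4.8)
\emph{Plan.} The idea is to push the bookkeeping of Lemma~\ref{t:wordPro} all the way to a first return.

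\emph{Step 1 (recognising $\Omega^{+}_{\alpha,d}$ along an orbit).} Since $\mathcal T_{\alpha}$ is a bijection of $\Omega_{\alpha}$ up to null sets, the preimage appearing in the definition of $\Omega^{+}_{\alpha,d}$ at a point $(x_{t},y_{t})$ of a $\mathcal T_{\alpha}$-orbit is the orbit predecessor $(x_{t-1},y_{t-1})$; hence $(x_{t},y_{t})\in\Omega^{+}_{\alpha,d}$ if and only if $x_{t-1}\in N_{d}$ and $x_{t}\in N_{d}$. (The conditions $x_{t}<0$, $y_{t}>0$ are then automatic: $N_{d}$ lies in $\{x<0\}$, and since $m=2$ the $y$-coordinate transforms as $y\mapsto-1/(y+d_{\alpha}(x_{t-1})t)$, which is positive once $y\ge0$ and $d_{\alpha}(x_{t-1})\le d\le-4$.) Call $t\ge1$ a \emph{doubly negative time} of a $\mathcal T_{\alpha}$-orbit when $x_{t-1}$ and $x_{t}$ are both negative; thus every visit to $\Omega^{+}_{\alpha,d}$ occurs at a doubly negative time.

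\emph{Step 2 (the two orbits meet at every doubly negative time).} Fix $(x_{0},y_{0})\in\Omega^{+}_{\alpha,d}$, so $x_{0}<0$. Set $z_{0}=(x_{0},y_{0})$ and recursively let $m^{(i)}\ge0$ be least so that the first coordinate of $\mathcal T_{\alpha}^{m^{(i)}}(z_{i-1})$ and that of $\mathcal T_{\alpha}^{m^{(i)}+1}(z_{i-1})$ are both negative; Lemma~\ref{t:wordPro} gives $k_{i}\in\mathbb N$ with $\mathcal T_{0}^{k_{i}}(z_{i-1})=\mathcal T_{\alpha}^{m^{(i)}+1}(z_{i-1})=:z_{i}$. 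With $n_{i}=i+\sum_{l\le i}m^{(l)}$ an easy induction gives $z_{i}=\mathcal T_{\alpha}^{n_{i}}(x_{0},y_{0})=\mathcal T_{0}^{k_{1}+\cdots+k_{i}}(x_{0},y_{0})$, and a second, equally easy, induction shows that $\{n_{i}\}_{i\ge1}$ is precisely the set of doubly negative times of the $\mathcal T_{\alpha}$-orbit of $(x_{0},y_{0})$. (Such times exist for a.e.\ $(x_{0},y_{0})$ because $\Omega^{+}_{\alpha,d}$ has positive $\mu$-measure and the planar system is recurrent.) By Step 1 the first $\mathcal T_{\alpha}$-return $\mathcal T_{\alpha}^{M}(x_{0},y_{0})$ to $\Omega^{+}_{\alpha,d}$ equals $\mathcal T_{\alpha}^{n_{j}}(x_{0},y_{0})$ for the least $j$ with $x_{n_{j}-1},x_{n_{j}}\in N_{d}$, and it equals $\mathcal T_{0}^{K}(x_{0},y_{0})$ for $K=k_{1}+\cdots+k_{j}$; hence $\mathcal T_{0}$ returns to $\Omega^{+}_{\alpha,d}$ no later than time $K$.

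\emph{Step 3 (no earlier $\mathcal T_{0}$-return).} It remains to rule out $\mathcal T_{0}^{\,l}(x_{0},y_{0})\in\Omega^{+}_{\alpha,d}$ for $1\le l<K$. When $l=k_{1}+\cdots+k_{i}$ with $i<j$ one has $\mathcal T_{0}^{\,l}(x_{0},y_{0})=\mathcal T_{\alpha}^{n_{i}}(x_{0},y_{0})\notin\Omega^{+}_{\alpha,d}$ by minimality of $j$. For the remaining $l$, the point $\mathcal T_{0}^{\,l}(x_{0},y_{0})$ is an interior point of one of the blocks of the admissible $T_{0}$-word produced in the proof of Lemma~\ref{t:wordPro} for the corresponding chunk; one checks that any such point either fails to lie in $\Omega_{\alpha}$ or has first coordinate in a cylinder $\Delta_{0}(i')$ with $i'>d$, and in either case is not in $\Omega^{+}_{\alpha,d}$. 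Granting this, $K$ is the first $\mathcal T_{0}$-return time, and since $(x_{0},y_{0})$ was arbitrary up to null sets the two first-return maps agree.

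\emph{Expected main obstacle.} The real work is the digit claim of Step 3. Running through the substitution rules $(i)$--$(vi)$: rule $(i)$ only affects the leading digit, which records the already-handled start $z_{i-1}$ of the chunk; the factors $W=A^{-2}C(A^{-1}C)^{n-3}$ and their listed modifications introduce only the digits $-1,-2$, with at most one $-3$ entering through a block $W^{a-1}A^{b-1}C$, all of which exceed $-4\ge d$. The delicate case is the terminal block $A^{a_{m+1}}C$ or $A^{a_{m+1}-1}C(A^{-1}C)^{n-3}$, whose leading digit is that of the last negative iterate of the chunk; it is precisely to keep this digit above $d$ that one also imposes $d\le d_{\alpha}(\ell_{0}(\alpha))-2$, exploiting that the relevant point has negative $T_{\alpha}$-image. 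One must in addition verify that the $y$-coordinates carried along by the word processing never place such an interior point inside $\Omega_{\alpha}$ in a way that meets the preimage condition of $\Omega^{+}_{\alpha,d}$; the separation built into the choice of $d$ is what makes this work.
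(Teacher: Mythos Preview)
Your overall strategy matches the paper's: use Lemma~\ref{t:wordPro} to realise the $\mathcal T_\alpha$-first-return as a $\mathcal T_0$-return, then exclude earlier $\mathcal T_0$-returns by controlling the digits that word processing can produce. Your chunking at doubly negative times is equivalent to the paper's device of adding a rule $(vii)$ that simply moves the marker past consecutive negative exponents; the two bookkeeping schemes give the same $T_0$-word.

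The gap is in Step~3, which you explicitly defer (``one checks'', ``Granting this'') and then try to patch in the obstacle paragraph. That patch is not quite right. Your ``at most one $-3$'' is a red herring, and your reading of the terminal block misidentifies what the bound $d\le d_\alpha(\ell_0(\alpha))-2$ is for. The paper's argument is cleaner and worth internalising: (a) every \emph{inserted} $T_0$-step coming from the substitution rules carries a digit in $\{-1,-2\}$, and (b) every \emph{original} exponent $a_i$ is altered at most once, and then decreased by exactly one. Point~(a) means the inserted orbit points themselves never lie in $N_d$. Point~(b) means that any $T_0$-orbit point in $N_d$ that is not already a $T_\alpha$-orbit point must be isolated in the sense that its immediate $T_0$-predecessor has digit in $\{-1,-2\}$ (hence is not in $N_d$); the one possible exception is the point just preceding $x_m$ in the terminal block, whose predecessor is again outside $N_d$. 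In either case the $\mathcal T_\alpha^{-1}$-preimage condition built into the definition of $\Omega^+_{\alpha,d}$ fails. This is where the preimage clause earns its keep: membership in $\Omega^+_{\alpha,d}$ requires \emph{two consecutive} visits to $N_d$, and word processing never manufactures such a pair.

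So your outline is correct and on the same track as the paper, but to close the argument you should replace the vague digit discussion with the dichotomy (inserted digits in $\{-1,-2\}$; original digits shifted by at most $-1$), and then invoke the preimage condition to kill the isolated $N_d$-visits rather than trying to bound all digits uniformly from below.
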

%------------------------------------------------------------------------ 

\begin{proof}  Since $\Omega^{+}_{\alpha, d}$ lies in the second quadrant of the $(x,y)$-plane, one finds   $\Omega^{+}_{\alpha, d} \subset \Omega_0$ and hence both return maps are defined.   Suppose that  $(x,y) \in \Omega^{+}_{\alpha, d}$ and  $\mathcal T_{\alpha}^{m+1}(x,y)$ is its first $\mathcal T_{\alpha}$-return. 
From the previous lemma,   this first return   is also  {\em some} return under $\mathcal T_0$.  

 If $m=0$ then certainly $T_{\alpha}(x) = T_0(x)$, and the first return of the two-dimensional maps  also agree.  
 Now assume that $m>0$.  
Suppose  
 \[ T_{\alpha}^{m+1}(x) = A^{a_{m+1}}C^{c_{m+1}}A^{a_{m}}C^{c_{m}} \cdots A^{a_1}C^{c_1}\cdot x.\]   
Of course,   if all  $a_i$ are negative, then the orbits are the same and the result clearly holds.  In general, there may be intermediate consecutive appearances of negative exponents.  We thus add:
 \[
(vii)\;\;\;\;\;  A^{a'} C  A^a C;\;     \mapsto     A^{a'} C;  A^a C \;\;\text{if}\;   a<0, a'<0,                    
 \]
 and process so as to achieve an admissible $T_0$ expression.

 Whenever any of our substitution rules inserts $T_0$-steps,  these insertions are always of corresponding $0$-digits  in $\{-1, -2\}$.   Of course,  any $x$-value  with either of these as its $0$-digit cannot have  $\alpha$-digit less than or equal to $d$.   Thus, these insertions cannot cause an early $\mathcal T_0$ return. 
 
 Our rules also make changes in the exponents of appearances of $A$.       However,  any such exponent is changed at most once throughout the process, and if it is changed then it is decreased exactly by one.   Since each substitution realizes a group identity,  it follows that  any element in $N_d$ that is in the $T_0$-orbit segment of $x$  but not already present in the $T_{\alpha}$-orbit segment is either (1) isolated (with respect to this property),  or (2) is sent by $T_0$ to  $T_{\alpha}^{m}(x)$ while its $T_0$-orbit predecessor is not in $N_d$.    But, by hypothesis, $\mathcal T_{\alpha}^{m}(x, y) \notin \Omega^{+}_{\alpha, d}$.   Therefore the result holds.  \end{proof} 
 
 \subsubsection{ Entropy times mass is constant, proof of Theorem~\ref{t:mIsTwoSimpleEntropyBehavior}}
%------------------------------------------------------------------------ 
\begin{Prop}\label{p:continuityOfEntropyOnIntervals}    Fix $n \ge 3$.   The function   $\alpha \mapsto  h(T_{2, n ,\alpha}) \mu(\Omega_{2, n, \alpha})$ is constant on   $(0, 1)$.  
\end{Prop}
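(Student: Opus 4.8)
### Proof proposal

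The plan is to combine the two key ingredients already assembled in this section: the explicit value of $h(T_{2,n,1/2})\,\mu(\Omega_{2,n,1/2})$ from \eqref{e:symmRosenEntr}, together with Abramov's Formula \eqref{e:AbramForm} applied along the region $\Omega^{+}_{\alpha,d}$ of Lemma~\ref{l:firstReturnTo2ndQuadrant}. The strategy is standard for this type of argument (compare \cite{CaltaKraaikampSchmidtPfsErgodicity, CaltaKraaikampSchmidtContinEntrop}): show that on the cross-section $\Omega^{+}_{\alpha,d}$ the first return system of $\mathcal T_\alpha$ is \emph{independent of $\alpha$}, hence its entropy is a fixed number; then pull that entropy back up to $\mathcal T_\alpha$ via Abramov to see that $h(T_{2,n,\alpha})\,\mu(\Omega_{2,n,\alpha})$ is constant. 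Because $\alpha=1/2$ is an admissible value, \eqref{e:symmRosenEntr} identifies that constant with $\mathrm{vol}(T^1(G_{2,n}\backslash\mathbb H))$, though for the present proposition we need only constancy.

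First I would fix $n\ge 3$ and fix $d$ (note $d$ depends only on $n$ and the digit $d_\alpha(\ell_0(\alpha))$; one checks this digit, or at least the value $\min\{-4,-2+d_\alpha(\ell_0(\alpha))\}$, is locally constant in $\alpha$, so $d$ can be taken uniform on each matching interval, and by the continuity arguments of \S~\ref{ss:conMuOm} the construction extends across non-matching values). By Lemma~\ref{l:firstReturnTo2ndQuadrant}, the first return map of $\mathcal T_\alpha$ to $\Omega^{+}_{\alpha,d}$ coincides with the first return map of $\mathcal T_0$ to $\Omega^{+}_{\alpha,d}$. The next step is to show that the first return system of $\mathcal T_0$ to $\Omega^{+}_{\alpha,d}$ is, up to a measure-preserving isomorphism, independent of $\alpha$: indeed $\mathcal T_0$ itself does not depend on $\alpha$, and the regions $\Omega^{+}_{\alpha,d}$ for varying $\alpha$ all sit inside $\Omega_0$ and are related by the $\mathcal T_0$-orbit structure — any two such regions that are both cross-sections to the same dynamics and both meet every orbit have induced systems that are measurably isomorphic (the induced map on a set depends only on the ambient system and the set, and two cross-sections inside $\{y>0,\,x<0\}$ whose union of forward/backward $\mathcal T_0$-orbits fills $\Omega_0$ have conjugate return maps). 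Concretely, I would exhibit $\mathcal T_0^{j}$ carrying a fundamental domain relation between $\Omega^{+}_{\alpha,d}$ and $\Omega^{+}_{1/2,d}$, or more robustly invoke that $h$ of the induced system equals $h(\mathcal T_0)\big/\mu_{0}(\Omega^{+}_{\alpha,d})$ by Abramov applied to $\mathcal T_0$ — but since $\mathcal T_0$ has infinite mass this last form must be handled via the finite-mass accelerated map $\mathcal F$ of \S~\ref{sss:PlanarForAcc} instead.

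So the cleanest route: let $U = U_\alpha$ denote the first return of $\mathcal T_\alpha$ to $\Omega^{+}_{\alpha,d}$, equal to the first return of $\mathcal T_0$ to the same set, which is in turn (since $\Omega^{+}_{\alpha,d}$ lies in the region where the acceleration by powers of $W$ is irrelevant — points of $N_d$ never lie in the $W$-cylinder) the first return of the \emph{finite-mass} accelerated map $\mathcal F$ of \S~\ref{sss:PlanarForAcc} to $\Omega^{+}_{\alpha,d}$. Abramov's Formula \eqref{e:AbramForm} then gives
\[
h(U_\alpha) = \frac{h(\mathcal F)}{\mu_{\Omega_0^{\mathrm{acc}}}(\Omega^{+}_{\alpha,d})} = \frac{h(\mathcal F)\,\mu(\Omega^{\mathrm{acc}}_{2,n,0})}{\mu(\Omega^{+}_{\alpha,d})},
\]
where $\Omega^{\mathrm{acc}}_{2,n,0}$ is the domain $\Omega$ of \S~\ref{sss:PlanarForAcc}; equivalently $h(U_\alpha)\,\mu(\Omega^{+}_{\alpha,d}) = h(\mathcal F)\,\mu(\Omega^{\mathrm{acc}}_{2,n,0})$, a quantity with no $\alpha$ in it. On the other hand, applying Abramov \eqref{e:AbramForm} the other way — $\Omega^{+}_{\alpha,d}$ as an induced set inside the finite-mass system $(\mathcal T_\alpha,\Omega_{2,n,\alpha})$, which is legitimate because $\Omega_{2,n,\alpha}$ has finite $\mu$-mass and $\mathcal T_\alpha$ is ergodic there — gives
\[
h(U_\alpha) = \frac{h(\mathcal T_\alpha)}{\mu_{\Omega_{2,n,\alpha}}(\Omega^{+}_{\alpha,d})} = \frac{h(\mathcal T_\alpha)\,\mu(\Omega_{2,n,\alpha})}{\mu(\Omega^{+}_{\alpha,d})}.
\]
Since $h(\mathcal T_\alpha) = h(T_{2,n,\alpha})$ (the planar map and interval map share entropy, being natural-extension related), comparing the two displays cancels $\mu(\Omega^{+}_{\alpha,d})$ and yields
\[
h(T_{2,n,\alpha})\,\mu(\Omega_{2,n,\alpha}) = h(\mathcal F)\,\mu(\Omega^{\mathrm{acc}}_{2,n,0}),
\]
the right side being manifestly independent of $\alpha$. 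This proves the proposition; specializing $\alpha=1/2$ and using \eqref{e:symmRosenEntr} moreover identifies the constant.

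The main obstacle I anticipate is the bookkeeping needed to make the two Abramov applications rigorous at the boundary of the argument: one must confirm that $\Omega^{+}_{\alpha,d}$ really is a set of positive finite $\mu$-measure meeting almost every $\mathcal T_\alpha$-orbit (so that the induced system is defined and ergodic), that $\Omega^{+}_{\alpha,d}$ avoids the $W$-cylinder entirely so that returns of $\mathcal T_0$ and of $\mathcal F$ to it literally coincide (this is where the choice $d\le -4$ and $d \le -2 + d_\alpha(\ell_0(\alpha))$ is used — it forces $x$ and its $\mathcal T_\alpha^{-1}$-preimage's $x$-coordinate to have small $\alpha$-digit, precluding the $A^{-2}C(A^{-1}C)^{n-3}$ pattern), and finally that the first-return identity of Lemma~\ref{l:firstReturnTo2ndQuadrant} is genuinely a measure-preserving conjugacy and not merely a pointwise-agreement statement. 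None of these is deep, but each must be stated; I would organize them as a short preliminary paragraph before the chain of equalities above. A secondary point: the positivity of $\mu(\Omega^{+}_{\alpha,d})$ and that it is bounded away from $0$ and $\infty$ as $\alpha$ varies in $(0,1)$ should be remarked, though in fact it drops out of the final identity and is needed only to license the division.
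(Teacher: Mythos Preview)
Your argument is correct in substance and uses the same key ingredient as the paper --- Lemma~\ref{l:firstReturnTo2ndQuadrant} together with Abramov's formula --- but you take a noticeably longer route. The paper's proof simply fixes two arbitrary $\alpha,\alpha'\in(0,1)$, sets $\Omega_{\alpha,\alpha'}=\Omega^{+}_{\alpha,d}\cap\Omega^{+}_{\alpha',d'}$, observes via Lemma~\ref{l:firstReturnTo2ndQuadrant} (and transitivity of inducing) that the first return maps of $\mathcal T_\alpha$ and $\mathcal T_{\alpha'}$ to this common region both equal the first return of $\mathcal T_0$ there, and applies Abramov once on each side. That yields
\[
\frac{h(\mathcal T_\alpha)\,\mu(\Omega_{2,n,\alpha})}{\mu(\Omega_{\alpha,\alpha'})}=\frac{h(\mathcal T_{\alpha'})\,\mu(\Omega_{2,n,\alpha'})}{\mu(\Omega_{\alpha,\alpha'})}
\]
directly, with no reference to the accelerated system $\mathcal F$ and no need to worry about whether $d$ can be chosen uniformly in $\alpha$.

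Your detour through $\mathcal F$ works, but it front-loads verifications (that $\Omega^{+}_{\alpha,d}\subset\Omega^{\mathrm{acc}}_{2,n,0}$, that $\mathcal F$ is the induced map of $\mathcal T_0$ on that domain, etc.) which the paper postpones to the proof of Lemma~\ref{l:integralsMatch}, where they are actually needed to handle the infinite-mass endpoint $\alpha=0$. For the present proposition, which concerns only $\alpha\in(0,1)$, the intersection trick is cleaner. Your approach does have the minor advantage of naming the constant as $h(\mathcal F)\,\mu(\Omega^{\mathrm{acc}}_{2,n,0})$ en route, but since the constant is identified anyway via \eqref{e:symmRosenEntr}, this buys little. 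Your opening discussion about $d$ being locally constant and extending across non-matching values is a symptom of the extra complication: the paper's pairwise comparison sidesteps that entirely.
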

%-----------------------------------------------------------------------
\begin{proof}  Fix $\alpha, \alpha' \in (0,1)$ and let $\Omega_{\alpha, \alpha'} = \Omega^{+}_{\alpha, d} \cap \Omega^{+}_{\alpha', d'}$, with the natural interpretation of the notation of    Lemma~\ref{l:firstReturnTo2ndQuadrant}.  That lemma implies that the first returns to this region  of $\mathcal T_{2,n,\alpha}$ and $\mathcal T_{2,n,\alpha}$  agree.  
 These hence define the same dynamical system, and Abramov's formula yields that their entropy can be expressed as 
\[ \dfrac{h(\mathcal T_{2,n,\alpha}) \, \mu( \Omega_{2,n,\alpha})}{\mu(\Omega_{\alpha, \alpha'})} =   \dfrac{h(\mathcal T_{2,n,\alpha'}) \, \mu( \Omega_{2,n,\alpha'})}{\mu(\Omega_{\alpha, \alpha'})}.\]
Since the entropy of a dynamical system equals the entropy of the natural extension system, the result holds.   
\end{proof} 
 
Theorem~\ref{t:mIsTwoSimpleEntropyBehavior} now follows from  Proposition~\ref{p:continuityOfEntropyOnIntervals} and Corollary~\ref {c:massIncreasesWhere}.

 \subsubsection{ Rohlin integrals are constant}  

We continue to follow \cite{CaltaKraaikampSchmidtContinEntrop}.   Recall the cocycle $\tau$ from Definition~\ref{def:Tau}.  
%------------------------------------------------------------------------ 
\begin{Def}\label{d:tau}  
  Fix $n \ge 3$.   For $ \alpha \in [0,1]$ and for each $x \in \mathbb I_{2,n, \alpha}$,  let $\tau_{\alpha}(x) = \tau_{2,n, \alpha}(x) = - 2 \log \vert c x + d\vert$ where $T_{2,n, \alpha}(x) = (a x + b)/(c x + d)$.   Of course, this is simply  $\tau_{\alpha}(x) =  \tau(M, x)$ where $T_{2,n, \alpha}(x) = M\cdot x$. 
  
   For any $n, \alpha$, we call the integral $\int_{\Omega_{2,n,\alpha}} \tau_{2,n,\alpha}(x)\,  d \mu$ a {\em Rohlin integral}.   See the first displayed equation in the following proof for a justification of this term. 
\end{Def}
%------------------------------------------------------------------------       

  We now show, in terms of simplified notation,  that the integral of $\tau_{\alpha}(x)$ over $\Omega_{\alpha}$ with respect to $d \mu$ gives the product of  $h(T_{\alpha})$ with  $\mu(\Omega_\alpha)$.   Furthermore,  these integrals are constant with respect to $\alpha$, including at the endpoints, this although both $\mu(\Omega_0)$ and $\mu(\Omega_1)$ are infinite.
 
%------------------------------------------------------------------------ 
\begin{Lem}\label{l:integralsMatch}  Fix $n \ge 3$.   For $0 <\alpha < 1$,  
 \[  h(T_{2,n,\alpha})\, \mu(\Omega_{2,n,\alpha})  = \int_{\Omega_{2,n,\alpha}} \tau_{2,n,\alpha}(x)\,  d \mu = \int_{\Omega_{2,n,0}} \tau_{2,n,0}(x)\,  d \mu =\int_{\Omega_{2,n,1}} \tau_{2,n,1}(x)\,  d \mu. 
\]
\end{Lem}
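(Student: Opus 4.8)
The plan is to establish the three equalities in sequence. First, the leftmost equality $h(T_{2,n,\alpha})\,\mu(\Omega_{2,n,\alpha}) = \int_{\Omega_{2,n,\alpha}} \tau_{2,n,\alpha}(x)\,d\mu$ is a direct consequence of Rohlin's entropy formula \eqref{e:rohlinEnt}. Indeed, writing $T_{2,n,\alpha}(x) = M\cdot x$ with $M = \left(\begin{smallmatrix} a&b\\c&d\end{smallmatrix}\right)$ of determinant one, one has $|T'_{2,n,\alpha}(x)| = |cx+d|^{-2}$, so $\ln|T'_{2,n,\alpha}(x)| = -2\log|cx+d| = \tau_{2,n,\alpha}(x)$. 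Since $\nu$, the invariant probability measure of $T_{2,n,\alpha}$, is the normalized marginal of $\mu$ on $\Omega_{2,n,\alpha}$ (that is, $d\nu = \mu(\Omega_{2,n,\alpha})^{-1}\int_{y}d\mu$), integrating $\tau_{2,n,\alpha}(x)$ against $\mu$ over $\Omega_{2,n,\alpha}$ returns $\mu(\Omega_{2,n,\alpha})$ times the Rohlin integral against $\nu$, which is $h(T_{2,n,\alpha})\,\mu(\Omega_{2,n,\alpha})$. This uses ergodicity of $T_{2,n,\alpha}$ (available via Theorem~\ref{t:detOneSetting} or the constructions of \cite{CaltaKraaikampSchmidtContinEntrop} carried over to $m=2$).

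Next, for the middle equality, I would show that $\alpha \mapsto \int_{\Omega_{2,n,\alpha}} \tau_{2,n,\alpha}(x)\,d\mu$ is constant on $(0,1)$ and then take the limit as $\alpha\to 0$. Constancy on $(0,1)$ follows the same mechanism as Proposition~\ref{p:continuityOfEntropyOnIntervals}: by Lemma~\ref{l:firstReturnTo2ndQuadrant} the first return maps of $\mathcal T_{2,n,\alpha}$ and of $\mathcal T_{2,n,0}$ to the common region $\Omega_{\alpha,\alpha'} = \Omega^+_{\alpha,d}\cap\Omega^+_{\alpha',d'}$ agree, so the induced systems coincide; since $\tau$ is a cocycle (Definition~\ref{def:Tau} and the displayed cocycle identity) the return-time function for the induced system is the Birkhoff sum of $\tau_{2,n,\alpha}$ along the return orbit, and by Abramov's formula \eqref{e:AbramForm} its integral over $\Omega_{\alpha,\alpha'}$ equals $h$ times the mass of $\Omega_{\alpha,\alpha'}$, independently of which $\alpha$ we started from. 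Combined with the first equality, this gives that $h(T_{2,n,\alpha})\,\mu(\Omega_{2,n,\alpha})$ — hence the Rohlin integral — is the same for all $\alpha,\alpha'\in(0,1)$. To pull this constant value down to $\alpha=0$ I would invoke a monotone/dominated convergence argument: the regions $\Omega_{2,n,\alpha}$ exhaust (an appropriate truncation of) $\Omega_{2,n,0}$ as $\alpha\to 0$ — concretely, using the explicit description of $\Omega_{2,n,0}$ in \S~\ref{sss:PlanarForZero} and the fact (from \S~\ref{ss:conMuOm}) that the $\mathcal Z_{\alpha'}$ converge to $\mathcal Z_\alpha$ — and $\tau_{2,n,0}(x) \ge 0$ near the cusp region where the mass of $\Omega_{2,n,0}$ accumulates (the integrand being $-2\log|cx+d|$ with $|cx+d|<1$ on the relevant cylinders), so $\int_{\Omega_{2,n,\alpha}}\tau_{2,n,\alpha}\,d\mu \to \int_{\Omega_{2,n,0}}\tau_{2,n,0}\,d\mu$ by monotone convergence even though $\mu(\Omega_{2,n,0})=\infty$. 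Here one must also check that the word-processing substitutions of Lemma~\ref{t:wordPro} preserve the Birkhoff sum of $\tau$ (they realize group identities, hence the total cocycle value over a reprocessed orbit segment is unchanged), so that the Rohlin integral computed through the $\mathcal T_0$-dynamics agrees with the one through the $\mathcal T_\alpha$-dynamics on the shared region.

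Finally, the rightmost equality $\int_{\Omega_{2,n,0}}\tau_{2,n,0}(x)\,d\mu = \int_{\Omega_{2,n,1}}\tau_{2,n,1}(x)\,d\mu$ follows immediately from the symmetry $\alpha\mapsto 1-\alpha$: by \S~\ref{sss:planarMaps} the map $\mathcal S\colon(x,y)\mapsto(-x,-y)$ conjugates $\mathcal T_{2,n,0}$ to $\mathcal T_{2,n,1}$ and preserves $\mu$, and by \S~\ref{sss:PlanarForZero}–\ref{ss:omZeroOne} it sends $\Omega_{2,n,0}$ to $\Omega_{2,n,1}$; moreover $\tau_{2,n,1}(-x) = \tau_{2,n,0}(x)$ since $|{-c}\cdot({-x})+d| = |cx+d|$ (the bottom row transforms compatibly under the conjugation by $C=R$). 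So a change of variables equates the two integrals. The main obstacle I anticipate is the passage to the boundary value $\alpha=0$: establishing the exhaustion of $\Omega_{2,n,0}$ by the $\Omega_{2,n,\alpha}$ with control on the integrand near the cusp, and confirming nonnegativity (or suitable domination) of $\tau_{2,n,0}$ on the escaping part of the domain, so that the constant value on $(0,1)$ genuinely equals the improper integral over the infinite-mass region $\Omega_{2,n,0}$. Everything else is bookkeeping with cocycles, Abramov's formula, and the symmetry $\mathcal S$.
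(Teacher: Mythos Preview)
Your first and third equalities match the paper's approach: Rohlin's formula for the leftmost equality, and the symmetry $\mathcal S$ for the rightmost (the paper invokes Theorem~\ref{t:mIsTwoSymm} and \S~\ref{ss:Symmetry} to reduce to proving only the equality with $\alpha=0$).

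The gap is in your middle equality. Your plan to pass from $\alpha\in(0,1)$ to $\alpha=0$ by a monotone/dominated convergence argument is not what the paper does, and as written it is problematic. The regions $\Omega_{2,n,\alpha}$ do not exhaust $\Omega_{2,n,0}$ in any straightforward sense: for $\alpha>0$ the domain $\Omega_{2,n,\alpha}$ has a nontrivial piece with $y<0$ (the $\Omega^-$ part), whereas $\Omega_{2,n,0}$ lies entirely in $\{y\ge 0\}$. Moreover the infinite mass of $\Omega_{2,n,0}$ is concentrated near the vertices on $y=-1/x$, where $|x|$ can exceed $1$ (e.g.\ near $(-t,1/t)$ with $t>1$ for $n\ge 4$), so $\tau_{2,n,0}(x)=-2\log|x|<0$ there; your nonnegativity claim fails exactly where you need it.

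The paper avoids any limiting argument. It first builds a tower over the base $\Omega^+_{\alpha,d}$ and uses the cocycle property of $\tau$ to show $\int_{\Omega_{2,n,\alpha}}\tau_{2,n,\alpha}\,d\mu = \int_{\Omega^+_{\alpha,d}}\log|\rho'(x)|\,d\mu$, where $\rho$ is the first-return projection. For the $\alpha=0$ side, since $\mu(\Omega_{2,n,0})=\infty$, the paper passes to the \emph{accelerated} system of \S~\ref{sss:PlanarForAcc}: the induced map on the finite-measure region $\mathcal I = \Omega_{2,n,0}\setminus\bigcup_{i=0}^{n-3}\mathcal T_{2,n,0}^i(\mathcal D)$. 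The cocycle property again gives that the Rohlin integral for the accelerated map over $\mathcal I$ equals $\int_{\Omega_{2,n,0}}\tau_{2,n,0}\,d\mu$. Crucially, the digit restriction in the definition of $\Omega^+_{\alpha,d}$ forces $\Omega^+_{\alpha,d}\subset\mathcal I$, so Lemma~\ref{l:firstReturnTo2ndQuadrant} applies to give that the first return maps of the two \emph{finite-measure} systems to $\Omega^+_{\alpha,d}$ agree, and a further cocycle step closes the loop. The accelerated system is the missing idea that replaces your convergence argument.
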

%------------------------------------------------------------------------ 

\begin{proof}     For $0 < \alpha < 1$,   Rohlin's formula  gives 
\[  h(T_{2,n,\alpha}) = \int_{\mathbb I_{2,n,\alpha}}\, \log \vert T'_{2,n,\alpha}(x) \vert\, d \nu_{2,n,\alpha}=  \int_{\mathbb I_{2,n,\alpha}}\, \tau_{2,n,\alpha}(x) d \nu_{\alpha} =  \dfrac{\int_{\Omega_{2,n,\alpha}}\, \tau_{2,n,\alpha}(x) d \mu}{\mu(\Omega_{2,n,\alpha})},
\] 
where the last equality holds because $\nu_{2,n,\alpha}$ is the marginal measure for the probability measure on $\Omega_{2,n,\alpha}$ induced by $\mu$.   Since  $\mu(\Omega_{2,n,\alpha})< \infty$, our first equality holds.  

By Theorem~\ref{t:mIsTwoSymm}  and the discussion of symmetry in \S~\ref{ss:Symmetry}, it suffices to prove only the first of the two remaining equalities. 
   
 We employ a tower construction, with base $\Omega^{+}_{\alpha, d}$.  Throughout, we use the convention that equalities of sets are all considered up to $\mu$-null sets.    Poincar\'e recurrence applied to each of $\mathcal T_{2,n,\alpha}^{\pm 1}$ on this positive measure subspace of $\Omega_{2,n,\alpha}$ shows the bijectivity of the first return of $\mathcal T_{2,n, \alpha}$ to $\Omega^{+}_{\alpha, d}$.    For each $k \in \mathbb N$ we define $R_k \subset \Omega^{+}_{\alpha, d}$ as the set of points whose first return to $\Omega^{+}_{\alpha, d}$  is given by applying $\mathcal T_{2,n,\alpha}^k$.   The $R_k$ thus partition $\Omega^{+}_{\alpha, d}$ and also $\Omega^{+}_{\alpha, d} = \cup_{k \in \mathbb N}\, \mathcal T_{2,n, \alpha}^{k}(R_k)$ up to $\mu$-null sets.      Now set $S = \cup_{k \in \mathbb N}\, \sqcup_{i=0}^{k-1} \mathcal T_{2,n, \alpha}^{i}(R_k)$.   We have that  $\mathcal T_{2,n, \alpha}$ bijectively maps the tower $S$ to itself.   By the ergodicity of $\mathcal T_{2,n,\alpha}$, we deduce that  $S = \Omega_{2,n,\alpha}$. 

Let  $\rho$ denote projection onto the $x$-coordinate of the first return map on $\Omega^{+}_{\alpha, d}$.   
 Due to  the cocycle property of $\tau(M, x)$ and the additivity of integration, one deduces that 
\[ \int_{\Omega_{2,n,\alpha}} \tau_{2,n,\alpha}(x)\,  d \mu = \int_{\Omega^{+}_{\alpha, d}}  \log \vert \rho'(x)\vert\,  d \mu.\]

Since $\Omega_{2,n,0}$ has infinite measure, we cannot simply repeat this argument to directly achieve the equality with $\Omega_{2,n,0}$ replacing $\Omega_{2,n,\alpha}$.    Rather, we turn to the analog of the induced system that \cite{CaltaSchmidt} associates to the $\mathcal T_{3,n,0}$.    Associated to the accelerated system given in \S~\ref{sss:PlanarForZero} is the induced system on $\mathcal I = \Omega_{2,n,0}\setminus \cup_{i=0}^{n-3}\, \mathcal T_{2, n, 0}^{i}(\mathcal D)$, where  $\mathcal D$ is the domain fibered over $[-t, \epsilon)$.   Since 
$\Omega_{2,n,0}$ meets the curve $y = -1/x$ exactly on the initial $n-2$ points of the $ \mathcal T_{2, n, 0}$-orbit of $(-t, 1/t)$,  this induced system is a finite $\mu$-measure dynamical system.   Now,   $\mathcal D$ is contained in the $\mathcal T_{2, n, 0}$-image of the blocks of digit $-3$ and less. That is, the   forward $\mathcal T_0$ orbits of the points of $\mathcal I$ up to first return to $\mathcal I$ gives all of $\Omega_{2,n,0}$. (As usual, these statements are true up to null sets, for example the purely periodic orbit of $(-t, 1/t)$ is not contained in those forward orbits.)   But, again by the cocycle property of $\tau(M,x)$ and the additivity of integration,   the integration of the  logarithm of the absolute value of the first derivative of the accelerated one-dimensional map over $\mathcal I$ equals  $\int_{\Omega_{0}} \tau_{2,n,0}(x)\,  d \mu$.    

Now, for our $0<\alpha<1$, due to the restriction on the digits in the definition of  the region $\Omega^{+}_{\alpha, d}$, the region is contained in $\mathcal I$.  Hence,  Lemma~\ref{l:firstReturnTo2ndQuadrant} implies that the first return maps of these two finite area systems agree.  Again the cocycle property gives that the Rohlin integrals of these systems are equal.  By transitivity of equality, we have  
$\int_{\Omega_{2,n,\alpha}} \tau_{2,n,\alpha}(x)\,  d \mu = \int_{\Omega_{2,n,0}} \tau_{2,n,0}(x)\,  d \mu$.      
\end{proof} 
 
 \bigskip
 Lemma~\ref{l:integralsMatch} combines with \eqref{e:symmRosenEntr} to achieve the proof of Theorem~\ref{t:mIsTwoGotVol}.

\section{From $\Omega_{2,n,1}$ to $\Omega_{3,n,1}$}\label{s:the Shift}
\subsection{The domain of bijectivity  $\Omega_{3,n,1}$}  
In [\cite{CaltaKraaikampSchmidtPfsErgodicity}, Proposition~12] one finds that for each $n \ge 3$ the region which in our notation is $\Omega_{3,n,1}$ is 
\begin{equation}\label{e:Om3n1} \Omega_{3,n,1} = ([0,1]\times [-1,0] )\;  \cup \;  \bigcup_{i=1}^{n-2}\; [r_i, r_{i-1}]\times [-1/r_{i-1},0],
\end{equation} 
where $r_0, r_1, \dots, r_{n-2}$ is the $T_{3,n,1}$-orbit of $r_0 = t = t_{3,n}$.   See [\cite{CaltaKraaikampSchmidtPfsErgodicity}, Figure~3].   

Directly above their proposition, they state what we can express, with $C_3$ dependent on $m=3$,  and $A_3$ denoting the translation by $t_{3,n}$, 
as $r_i = (A_3 C_{3}^{2})^{n-2}\cdot t$ for $1 \le i \le n-2$, with $r_{n-2}\cdot t = 1$; and,  $T^{n-1}(t)= A_3 C_{3} (A_3 C_{3}^{2})^{n-2}\cdot t = t$.

\subsection{Sending $\Omega_{2,n,1}$ to the right hand part of $\Omega_{3,n,1}$}\label{ss:theShift}  Fix $n \ge 3$, and note that $t_{3,n} = 1 + t_{2,n}$.

Fix $M_1 = \begin{pmatrix}1&1\\0&1\end{pmatrix}$.  We now consider the $\mathcal T_M$ image of  $\Omega_{2,n,1}$.    This function acts on $x$-coordinates as   translation by $1$.      We have $RM_1 R \cdot y = -1/(1 -1/y)$.  
Thus,  $\mathcal T_{M_1}(t_{2,n}, -1/t_{2,n}) = (t_{3,n}, -1/t_{3,n})$.   

Now, $C_3 = \begin{pmatrix}1&-1\\1&0\end{pmatrix}$, and thus $C_{3}^{2}M_1 = C_2$,
 where of course we are letting $C_2 =  \begin{pmatrix}0&-1\\1&0\end{pmatrix}$.  Since  $A_3 = M_1 A_2$ (where we let $A_2$ denote that translation by $t_{2,n}$),  we have that $M_{1}^{-1} A_3 C_{3}^{2}M_1 = A_2 C_2$.   Thus,  $\mathcal T_{M_1}$ conjugates $\mathcal T_{A_2 C_2}$ to $\mathcal T_{A_3 C_{3}^{2}}$.    Referring back to \eqref{e:om2N0} and its discussion, we find that $\mathcal T_{M_1}$ sends $\Omega_{2,n,1}$ to $\Omega_{3,n,1}\setminus ([0,1]\times [-1,0])$.  In other words,  $\Omega_{3,n,1} = ([0,1]\times [-1,0]) \cup \mathcal T_{M_1}(\Omega_{2,n,1})$, see Figure~\ref{f:mIs2ToRightPartMis3}.

%-----------------------------Figure Planar Ext  for (m,n,alpha) = (2,3,1/5) -------------------------------
\begin{figure}[h]
\scalebox{.45}{
\begin{tikzpicture}[scale=5]
%outline of Omega
\filldraw[blue!40] (0,0)--(\bigG,0)--(\bigG, -\litG)--(1, -\litG)--(1, -1)--(\litG, -1)--(\litG, -\bigG)--(0,-\bigG)--cycle;
           %Place dot at vertices to be labeled
\foreach \x/\y in {0/0, \bigG/0, \bigG/-\litG, 1/-1, \litG/-\bigG, 0/-\bigG%
} { \node at (\x,\y) {$\bullet$}; } 
%label points
\node[left] at (0, 0) {$(0,0)$};
\node[above, left] at (0, -\bigG) {$(0,-G)$};
\node[above] at (\bigG, 0) {$(G,0)$};
\node[below] at (\bigG, -\litG) {$(G,-g)$};
\node[below] at (1,-1) {$(1,-1)$};
\filldraw[shift={(3,0)}, gray!40] (0,0)--(1,0)--(1,-1)--(0,-1)--cycle;  %The square 
\filldraw[shift={(3,0)}, blue!40] (1,0)--(1+\bigG,0)--(1+\bigG, -\litG*\litG)--(2, -\litG*\litG)--(2, -0.5)--(\bigG, -0.5)--(\bigG, -\litG)--(1, -\litG)--cycle; %It's complement
           %Place dot at vertices to be labeled (adding in shift by hand)
\foreach \x/\y in {3/0, 4+\bigG/0, 4+\bigG/-\litG*\litG, 5/-0.5, 3+\bigG/-\litG, 4/-1, 3/-1%
} { \node at (\x,\y) {$\bullet$}; } 
%label points
\node[left] at (3, 0) {$(0,0)$};
\node[left] at (3, -1) {$(0,-1)$};
\node[below] at (4, -1) {$(1,-1)$};
\node[above] at (4+\bigG, 0) {$(1+G,0)$};
\node[below] at (4+\bigG, -\litG*\litG) {$(1+G,-g^2)$};
\node[below] at (5, -0.5) {$(2,-1/2)$};
\node[below] at (3+\bigG, -\litG) {$(G,-g)$};
\end{tikzpicture}
}
\caption{For each $m\ge 3$, the map $\mathcal T_{M_1}$, with $M_1 = \begin{pmatrix}1&1\\0&1\end{pmatrix}$, sends  the planar domain $\Omega_{2,n,1}$ to $\Omega_{3,n,1}\setminus \{(x,y) \,|\,  x\ge 1\}$, see \S~\ref{ss:theShift}.  Here,  $n$=5.   On the left: The planar domain $\Omega_{2,5,1}$     On the right: $\Omega_{3,5,1}$ with the $\mathcal T_{M_1}$-image of $\Omega_{2,5,1}$ highlighted.}
\label{f:mIs2ToRightPartMis3}
\end{figure}
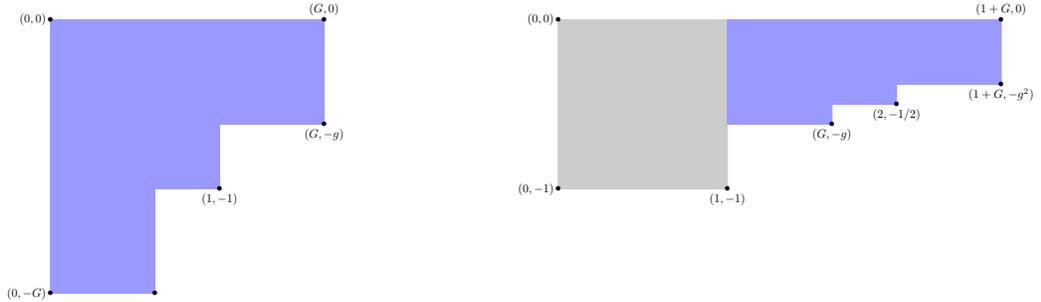
%-----------------------------------------------------------------------

 \subsection{Change of variable calculation}    Due to  the invariance of $\mu$ under  $\mathcal T_M$ in general and thus under $\mathcal T_{M_1}$ in particular,    for each $n \ge 3$  we have
 \[
\int_{\Omega_{3,n,1}\setminus ([0,1]\times [-1,0])} \tau_{3,n,1}(x)\,  d \mu = \int_{\Omega_{2,n,1}} \tau_{3,n,1}(M_1\cdot x)\,  d \mu.
\]
 
 Now, for $x \ge  1$ one has $\tau_{3,n,1}(x) = \tau(\, A_{3}^k C_{3}^{2},  x)$ for some $k \ge 1$.    Since   both $\tau(A_3, x)$ and $\tau(A_2, x)$ vanish for any $x$,    the cocycle property of $\tau(M, x)$ gives $\tau_{3,n,1}(M_1\cdot x) = \tau(A_{3}^k C_{3}^{2} M_1, x) =   \tau(C_{3}^{2} M_1, x) = \tau(C_{2} , x) = \tau(A_{2}^{\ell}C_{2}, x)$ for  any $\ell\in \mathbb Z$ and any $0\le x < t_{2,n}$.   
Therefore,  
\[\int_{\Omega_{3,n,1}\setminus ([0,1]\times [-1,0])} \tau_{3,n,1}(x)\,  d \mu = \int_{\Omega_{2,n,1}} \tau_{2,n,1}(x)\,  d \mu.
\]

 Finally, Theorem~\ref{t:mIsTwoGotVol}  gives 
 \begin{equation}\label{e:compOfSquareLittleVol} 
 \int_{\Omega_{3,n,1}\setminus ([0,1]\times [-1,0])} \tau_{3,n,1}(x)\,  d \mu = \text{vol}(T^1(G_{2,n}\backslash \mathbb H)).
\end{equation}

  \section{Proof of the conjecture}\label{s:theConjectureHolds!}  
 
 \subsection{Conjecture reduction given in  \cite{CaltaKraaikampSchmidtContinEntrop}}
 By [\cite{CaltaKraaikampSchmidtContinEntrop}, Theorem~4], for each $n \ge 3$,  the function  $\alpha \mapsto  h(T_{3,n,\alpha}) \mu(\Omega_{3,n,\alpha})$ is constant on $(0,1)$.  Furthermore, for all such $\alpha$, by [\cite{CaltaKraaikampSchmidtContinEntrop}, Lemma~149],
  \[  h(T_{3,n,\alpha}) \mu(\Omega_{3,n,\alpha})  = \int_{\Omega_{3,n,\alpha}} \tau_{3,n,\alpha}(x)\,  d \mu = \int_{\Omega_{3,n,0}} \tau_{3,n,0}(x)\,  d \mu =\int_{\Omega_{3,n,1}} \tau_{3,n,1}(x)\,  d \mu. 
  \]  
Thus, to prove the first statement of Theorem~\ref{t:Main}, it suffices to show that $\int_{\Omega_{3,n,1}} \tau_{3,n,1}(x)\,  d \mu$ equals the volume of the unit tangent bundle of $G_{3,n}\backslash \mathbb H$.       
  
 As mentioned above,  [\cite{CaltaKraaikampSchmidtPfsErgodicity}, Proposition~12] shows that $\Omega_{3,n,1}$ has the form of \eqref{e:Om3n1} and in particular contains the square  $[0,1]\times [-1,0]$.    By [\cite{CaltaKraaikampSchmidtContinEntrop}, Lemma~151], 
 \begin{equation}\label{e:littleVol}   \int_{[0,1]\times [-1,0]}  -2 \log  x     \; d\mu = \pi^2/3. 
\end{equation} 
As pointed out in the proof of [\cite{CaltaKraaikampSchmidtPfsErgodicity}, Proposition~12], 
for $x<1$ one has $T_{3,n,1}(x) = A_{3}^k C_3\cdot x$, with $k \in \mathbb Z$.  Hence, for such $x$,   $\tau_{3,n,1}(x) = -2 \log  x $.  It follows that 
\[\int_{\Omega_{3,n,1}} \tau_{3,n,1}(x)\,  d \mu = \pi^2/3 + \int_{\Omega_{3,n,1}\setminus ([0,1]\times [-1,0])} \tau_{3,n,1}(x)\,  d \mu.\]

 \subsection{Final argument}
From this last,    \eqref{e:compOfSquareLittleVol}  gives 
\begin{equation}\label{e:rohlinIntIsvolSum} \int_{\Omega_{3,n,1}} \tau_{3,n,1}(x)\,  d \mu = \pi^2/3 + \text{vol}(T^1(G_{2,n}\backslash \mathbb H)).
\end{equation}
 
  From \eqref{e:volForm}, trivial algebra  shows that 
$\text{vol}(T^1(G_{3,n}\backslash \mathbb H)) = \pi^2/3 + \text{vol}(T^1(G_{2,n}\backslash \mathbb H))$. Combined with \eqref{e:rohlinIntIsvolSum}, this shows 
\[ \int_{\Omega_{3,n,1}} \tau_{3,n,1}(x)\,  d \mu = \text{vol}(T^1(G_{3,n}\backslash \mathbb H))\]
and the conjecture holds!

\end{document}